\documentclass[12pt,a4paper]{amsart}

\usepackage[all]{xy}

\setlength{\textwidth}{15.83cm}
\setlength{\hoffset}{-1.5cm}

\usepackage[ascii]{inputenc} 
\usepackage{amssymb}
\usepackage{mathrsfs}

\usepackage{xcolor}

\usepackage[shortlabels]{enumitem}
\setlist[enumerate,1]{label={(\Alph*)}}
\setlist[enumerate,2]{label={(\alph*)}}
\setlist[enumerate,3]{label={$\bullet_{\arabic*}$}}


\newenvironment{PROOF}[2][\proofname.]
   {\begin{proof}[#1]}
   {\end{proof}}


\newtheorem{theorem}{Theorem}[section]

\newtheorem{claim}[theorem]{Claim}

\newtheorem{corollary}[theorem]{Corollary}

\newtheorem{lemma}[theorem]{Lemma}
\newtheorem{observation}[theorem]{Observation}
\newtheorem{proposition}[theorem]{Proposition}

\theoremstyle{definition}

\newtheorem{convention}[theorem]{Convention}

\newtheorem{definition}[theorem]{Definition}
\newtheorem{discussion}[theorem]{Discussion}
\newtheorem{example}[theorem]{Example}

\newtheorem{fact}[theorem]{Fact}
\newtheorem{hypothesis}[theorem]{Hypothesis}
\newtheorem{problem}[theorem]{Problem}

\theoremstyle{remark}

\newtheorem{notation}[theorem]{Notation}
\newtheorem{question}[theorem]{Question}
\newtheorem{remark}[theorem]{Remark}


\newcommand{\cf}{\mathrm{cf}}

\newcommand{\arity}{\mathrm{arity}}

\newcommand{\sub}{\mathrm{sub}}

\newcommand{\Ht}{\mathrm{ht}}

\newcommand{\Ord}{\mathrm{Ord}}
\newcommand{\otp}{\mathrm{otp}}

\newcommand{\Rang}{\mathrm{Rang}}
\newcommand{\rang}{\mathrm{rang}}

\newcommand{\set}{\mathrm{set}}

\newcommand{\Sub}{\mathrm{Sub}}\newcommand{\subs}{\mathrm{subset}}
\newcommand{\suc}{\mathrm{suc}}


\newcommand{\affine}{\mathrm{Affine}} 
\newcommand{\SPSF}{\mathrm{SPSF}}

\newcommand{\AP}{\mathrm{AP}}
\newcommand{\AB}{\mathrm{AB}}
\newcommand{\overa}{\overline{a}}
\newcommand{\overb}{\overline{b}}
\newcommand{\overc}{\overline{c}}





\DeclareMathOperator{\len}{lg}
\DeclareMathOperator{\Set}{Set}
\DeclareMathOperator{\inter}{inter}
\DeclareMathOperator{\TFAB}{TFAB}


\newcommand{\bff}{\mathbf{f}}

\newcommand{\bfI}{\mathbf{I}}

\newcommand{\bfV}{\mathbf{V}}


\newcommand{\bbL}{\mathbb{L}}
\newcommand{\bbN}{\mathbb{N}}
\newcommand{\bbP}{\mathbb{P}}
\newcommand{\bbQ}{\mathbb{Q}}

\newcommand{\bbZ}{\mathbb{Z}}

\newcommand{\but}{\underline{but}}
\newcommand{\Iff}{\underline{iff}}

\newcommand{\then}{\underline{then}}

\newcommand{\when}{\underline{when}}

\newcommand{\pred}{pred}


\newcommand{\sn}{\smallskip\noindent}


\newcommand{\cC}{\mathscr{C}}

\newcommand{\cL}{\mathscr{L}}

\newcommand{\cT}{\mathscr{T}}


\newcommand{\varp}{\varepsilon}

\newcommand{\rest}{\restriction}

\newcount\skewfactor
\def\mathunderaccent#1#2 {\let\theaccent#1\skewfactor#2
\mathpalette\putaccentunder}
\def\putaccentunder#1#2{\oalign{$#1#2$\crcr\hidewidth
\vbox to.2ex{\hbox{$#1\skew\skewfactor\theaccent{}$}\vss}\hidewidth}}

\newbox\noforkbox \newdimen\forklinewidth
\forklinewidth=0.3pt   
\setbox0\hbox{$\textstyle\bigcup$}
\setbox1\hbox to \wd0{\hfil\vrule width \forklinewidth depth \dp0
                        height \ht0 \hfil}
\wd1=0 cm
\setbox\noforkbox\hbox{\box1\box0\relax}
\def\unionstick{\mathop{\copy\noforkbox}\limits}
\def\nonfork#1#2_#3{#1\unionstick_{\textstyle #3}#2}
\def\nonforkin#1#2_#3^#4{#1\unionstick_{\textstyle #3}^{\textstyle
    #4}#2}
%
\setbox0\hbox{$\textstyle\bigcup$}
\setbox1\hbox to \wd0{\hfil{\sl /\/}\hfil}
\setbox2\hbox to \wd0{\hfil\vrule height \ht0 depth \dp0 width
                                \forklinewidth\hfil}
\wd1=0cm
\wd2=0cm
\newbox\doesforkbox
\setbox\doesforkbox\hbox{\box1\box0\relax}
\def\nunionstick{\mathop{\copy\doesforkbox}\limits}

\def\fork#1#2_#3{#1\nunionstick_{\textstyle #3}#2}
\def\forkin#1#2_#3^#4{#1\nunionstick_{\textstyle #3}^{\textstyle
    #4}#2}

\newcommand{\stickT}{%
\setbox255=\hbox{\raise1ex\hbox{$\hspace{0.2pt}\,\bullet\,$}}
\mathord{\rlap{\hbox to\wd255{\hss\hbox{$|$}\hss}}
\box255}
}
\newcommand{\stickS}{%
\setbox255=\hbox{\raise0.6ex\hbox{$\scriptstyle\bullet$}}
\mathord{\rlap{\hbox to\wd255{\hss\hbox{$\scriptstyle|$}\hss}}
\box255}
}

 \usepackage{hyperref}

\author[M. Asgharzadeh]{Mohsen Asgharzadeh}
\address{Hakimiyeh, Tehran, Iran.}
\email{mohsenasgharzadeh@gmail.com}

\author[M. Golshani]{Mohammad Golshani}
\address{School of Mathematics\\
 Institute for Research in Fundamental Sciences (IPM)\\
 P.O.\ Box:
19395--5746, Tehran, Iran.}
\email{golshani.m@gmail.com}

\author[S. Shelah]{Saharon Shelah}
\address{Einstein Institute of Mathematics,
The Hebrew University of Jerusalem,
9190401, Jerusalem, Israel; and\\
Department of Mathematics,
Rutgers University,
Piscataway, NJ 08854-8019, USA}
\urladdr{https://shelah.logic.at/}
\email{shelah@math.huji.ac.il}
\thanks{This work is based upon research funded by Iran National Science Foundation (INSF) under project No. 4027168. The third author thanks an individual who wishes to remain anonymous for generously funding typing services. Research partially supported by the Israel Science Foundation (ISF) grant no: 1838/19, and Israel Science Foundation (ISF) grant no: 2320/23;
	Research partially supported by the grant ``Independent Theories'' NSF-BSF, (BSF 3013005232).
The reader should note that the version in the author's  website is usually more up-to-date than the one in arXiv.
This is publication number
1246
in Saharon Shelah's list.
}



\makeatletter
\@namedef{subjclassname@2020}{\textup{2020} Mathematics Subject Classification}
\makeatother
\subjclass[2020]{Primary: 20K99, 03C10; Secondary: 03C60, 03C45, 03C75, 20K30.}
\keywords{Absolutely co-Hopfian groups;  almost isomorphic; beautiful cardinal;  infinitary logic;  elimination of quantifiers; forcing techniques.}
\date{9-05-2022} 

\title[Absolute co-Hopfianity]{Expressive Power of Infinitary Logic and Absolute co-Hopfianity}

\newcount\skewfactor
\def\mathunderaccent#1#2 {\let\theaccent#1\skewfactor#2
	\mathpalette\putaccentunder}
\def\putaccentunder#1#2{\oalign{$#1#2$\crcr\hidewidth
		\vbox to.2ex{\hbox{$#1\skew\skewfactor\theaccent{}$}\vss}\hidewidth}}

\date{\today}
\begin{document}

\begin{abstract}
Recently, Paolini and Shelah have
constructed absolutely Hopfian
torsion-free abelian groups of any given size. In contrast, we show that this is not necessarily the case for absolutely co-Hopfian groups.  We use the  infinitary logic to show that there are no absolute co-Hopfian abelian groups above the first beautiful cardinal. An extension of this result to the category of modules over a commutative ring is given.
\end{abstract}
\maketitle
\numberwithin{equation}{section}\tableofcontents

\section{Introduction}

	An abelian group	$G$ is called \emph{Hopfian} (resp. \emph{co-Hopfian}) if its surjective (resp. injective) endomorphisms are automorphisms. In other words, the co-Hopfian property of groups is, in some sense,  dual to the Hopfian groups.
 These groups were first considered by Baer in \cite{baer}, under different names. Hopf \cite{hopf} himself  showed that the fundamental group of closed two-dimensional orientable surfaces are Hopfian.

 There are a lot of interesting research papers in this area. Here, we recall only a short list of them. Following the book \cite{comb}, Hopf in 1932, raised the question as to whether a finitely generated group can be isomorphic to a proper factor of itself. For this and more observations,  see \cite{comb}.
 Beaumont \cite{B}
proved  that if $G$ is an abelian group of finite rank
 all of whose elements have finite order, then $G$ has no proper isomorphic
 subgroups. Kaplansky  \cite{KAP} extended this to modules over  a commutative principal ideal ring $R$ such that every proper residue class ring of $R$ is finite. Beaumont and Pierce \cite{Be} proved  that if $G$ is co-Hopfian, then the torsion part of $G$  is of size at most continuum, and further that $G$ cannot be a $p$-groups of size $\aleph_0$. This naturally left open the problem of the existence of co-Hopfian $p$-groups of uncountable size $\leq 2^{\aleph_0}$, which was later solved by Crawley \cite{crawley} who proved that there exist  $p$-groups of size $2^{\aleph_0}$.
 One may attempt to construct (co-)Hopfian
 groups of large size by taking a huge direct sum of (co-)Hopfian groups.
 In this regard,  Baumslag \cite{ba}  asked when  the direct sum
 of two (co-)Hopfian groups is again (co)-Hopfian.  Corner \cite{cor} constructed two torsion-free abelian
 Hopfian groups which have non-Hopfian direct sum. See  \cite{GF}, for more on this and its connections with the study of algebraic entropy.

Despite its long history, only very recently
the problem of the existence of uncountable (co-)Hopfian
abelian groups was solved, see \cite{AGSa} and  \cite{1214}. For instance, in view of \cite[Corollary 4.13]{AGSa} and
for any cardinals $\lambda> 2^{\aleph_{0}}$, we observe that there is a co-Hopfian abelian group $G$ of size $\lambda$ iff $\lambda= \lambda^{\aleph_{0}}$.
The usual construction of (co)-Hopfian groups is not absolute, in the sense that they may lose their property in some generic extension of the universe, and the problem of giving more explicit and constructive
constructions of such groups has raised some attention in the literature. For example, while it is known from the work of Shelah \cite{Sh:44} that there are indecomposable abelian groups of any infinite size, the problem of the existence of arbitrary large absolutely indecomposable groups is still open, see  \cite{Nad94}. It is worth noticing  that indecomposability implies the Hopfian
property. Another interpretation of  more explicit construction  is  provable without the axiom of choice.

Recall that a group	$G$  \emph{absolutely co-Hopfian} if it is co-Hopfian in any
further	generic extension of the universe.
Similarly, one may define  \emph{absolutely  Hopfian} groups.
As far as we know several researchers have considered the following problem:

\begin{problem}\label{prob}\begin{itemize}
		\item[(i)] (see e.g. \cite[Page 535, Problem (3)]{1214})
	 Is it possible to construct absolutely  Hopfian
	torsion-free groups of a given size?
	
	\item[(ii)]	(Shelah) Is it possible to construct absolutely co-Hopfian
	torsion-free groups of a given size? \end{itemize}
\end{problem}

Recently,
Paolini and Shelah  \cite[Theorem 1.3]{1214}
constructed absolutely Hopfian
torsion-free groups of any given size $\lambda$, thereby confirming Problem \ref{prob}(i) in positive direction. It seems in some sense that Hopfian and co-Hopfian groups are dual to each ether, one may predict that there is a connection between Problem \ref{prob} (i) and (ii). But,  any such dual functor, may enlarge or collapse the cardinality of the corresponding groups, hence we can not use the ideas behind the duality to answer Problem \ref{prob}(ii). For example, Braun and Str\"ungmann \cite{independence_paper} showed that
the existence of  infinite abelian  $p$-groups of
size $\aleph_0 < |G| < 2^{\aleph_0}$ of the following types are independent of ZFC:
\begin{itemize}
	\item[(a)] both Hopfian and co-Hopfian,
	\item[(b)] Hopfian but not co-Hopfian,
	\item[(c)] co-Hopfian but not Hopfian.
\end{itemize}Also, they proved that the above three
types of groups of size $2^{\aleph_0}$ exist in ZFC.
In \cite{Sh:110}, Shelah studied and  coined the concept of a \emph{beautiful cardinal},  denoted   by $\kappa_{\rm{beau}}$, which is a kind of Ramsey cardinal (see Definition \ref{beau}). This cardinal has an essential role in the study of  absolutely  endorigid groups.
Indeed, according to \cite{Sh:880}, for any infinite cardinal $\lambda < \kappa_{\rm{beau}}$,    there is a torsion-free absolutely endorigid abelian group $G$ of size $\lambda$. By definition,  for any morphism $f:G\to G$ there is an integer $n$
so that $f(g)= ng$ for all $g\in G$. In particular, if
$G$ is co-Hopfian, then $G$ is divisible.
For the remaining cardinals $\lambda \geq \kappa_{\rm{beau}}$ the existence of an  absolutely  endorigid group of size $\lambda$ is one of the most challenging problems in
the theory infinite abelian groups.
The
(co-)Hopfian property easily can be extended to the context of modules over commutative rings and even to the context of sheaves over schemes. However, compared to the case of abelian groups, and up to our knowledge, there are very few results for modules. For example, a funny result of Vasconcelos \cite[1.2]{vo} says that any surjective $f:M\to M$ is an isomorphism, where $M$ is  a noetherian  module over a commutative and noetherian ring $R$.
As a geometric sample, let $X$ be an algebraic variety over an algebraically closed
field. If a morphism $f : X\to X$ is injective then  a result of   Ax and   Grothendieck indicates that  $f$ is bijective, see Serre's exposition \cite{ser}.

In contrast  to the case of Hopfian groups (see  \cite[Theorem 1.3]{1214}), we use  the additive frame to show that there are no
absolutely co-Hopfian groups above the beautiful cardinal, providing
a partial solution to Problem \ref{prob}(ii). Indeed, we prove a more general
result in the context of additive $\tau$-models (see Definition \ref{a2}), which includes in particular
the cases of abelian groups and $R$-modules, for an arbitrary commutative ring $R$:

\begin{theorem}\label{1.2}
The following assertions are valid:
\begin{enumerate}  \item    If $M$ is an abelian group of cardinality  greater or equal $ \kappa := \kappa_{\rm{beau}}$, then $M$ is not absolutely co-Hopfian.
	
	\item	If $M$ is an $R$-module of cardinality  greater or equal $ \kappa=\kappa_{\rm{beau}}(R, \aleph_0)$,
	then $M$ is not absolutely co-Hopfian.
	
		\item	If $M$ is an additive $\tau$-model  of cardinality  greater or equal $ \kappa=\kappa_{\rm{beau}}(\tau)$,
		then $M$ is not absolutely co-Hopfian.
\end{enumerate}
\end{theorem}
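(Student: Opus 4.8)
The plan is to prove the general statement (3) and then read off (1) and (2) as special cases. Abelian groups and $R$-modules are additive $\tau$-models in the sense of Definition \ref{a2} -- for the vocabulary of groups, and for that vocabulary augmented by the unary scalar operations $\{r\cdot(-):r\in R\}$, respectively -- and under this identification the quantities $\kappa_{\rm{beau}}$ and $\kappa_{\rm{beau}}(R,\aleph_0)$ are the instances $\kappa_{\rm{beau}}(\tau)$ of the general parameter. Recall that $M$ being \emph{not absolutely co-Hopfian} means that in \emph{some} forcing extension $M$ carries an injective endomorphism that is not onto. So the overall strategy is: while $|M|\ge\kappa:=\kappa_{\rm{beau}}(\tau)$ holds in the ground model $\bfV$, I would use the large-cardinal hypothesis to extract purely $\bbL_{\infty,\theta}$-type-theoretic data from $M$; then, since $\equiv_{\bbL_{\infty,\theta}}$ and the relevant type-equalities are absolute, I would collapse $|M|$ to $\omega$ (as demanded explicitly in (1)) and build the desired self-embedding inside $\bfV^{\bbP}$, where $M$ is now countable and a back-and-forth construction is available.

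The extraction step is where the hypothesis $|M|\ge\kappa_{\rm{beau}}(\tau)$ is spent, through the Ramsey-type partition property of beautiful cardinals (Definition \ref{beau}). From it I would produce a long sequence $\langle \overline{a}_\alpha:\alpha<\kappa\rangle$ of $\bbL_{\infty,\theta}$-indiscernibles inside $M$ and, more importantly, organize $M$ around them as a nice frame $\bff$ with $M_{\bff}=M$, so that the $\bbL_{\infty,\theta}$-type structure of $M$ is governed by these indiscernibles. The governing principle, supplied by the elimination-of-quantifiers analysis of the preceding sections, is that affine shifts along the indiscernibles preserve types: for a tuple $\overline{a}$ from $M$ one has, for suitable $\beta<\gamma<\kappa$, that $\overline{a}$ and $\overline{a}-\overline{a}_\beta+\overline{a}_\gamma$ realize the same $\bbL_{\infty,\theta}$-type. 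This is the combinatorial engine that converts large size into type-preserving self-maps, and all of this data persists into $\bfV^{\bbP}$ by absoluteness.

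With this in hand I would build $\varphi$ by a back-and-forth recursion through the now-countable $M$ in $\bfV^{\bbP}$. At each stage $\varphi$ is defined on finitely (or $<\theta$) many elements so as to shift the indiscernibles one step up, via $\overline{a}_\alpha\mapsto\overline{a}_{\alpha+1}$, while respecting the $\tau$-operations. Here the quantifier elimination down to positive existential (affine) formulas is precisely what upgrades type-preservation to the homomorphism property, and the $(<\kappa)$-algebraic-closure analysis is what lets me extend $\varphi$ across elements that are algebraic over the indiscernibles. Preservation of $\bbL_{\infty,\theta}$-types forces $\varphi$ to be injective; arranging throughout the recursion that the distinguished element $\overline{a}_0$ is never placed in the range forces $\varphi$ to be non-surjective. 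Reading this back, one obtains the injective, non-onto $\varphi\in\End(M)$ required in (1), and hence the failure of absolute co-Hopfianity asserted in (1)--(3).

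The step I expect to be the main obstacle is the assembly in the last paragraph: manufacturing, from the purely type-theoretic information (indiscernibility together with the affine-shift type-equalities), a single map that is \emph{simultaneously} a $\tau$-homomorphism, injective, and not onto. Injectivity comes for free from preservation of $\bbL_{\infty,\theta}$-types, but the homomorphism property needs the full force of the quantifier-elimination reduction, and non-surjectivity needs genuine control of the $(<\kappa)$-algebraic closure, so that the omitted $\overline{a}_0$ cannot be recovered as an affine combination of elements already in the range. Balancing these three requirements coherently along the recursion, rather than the beautiful-cardinal combinatorics itself, is where the real difficulty lies.
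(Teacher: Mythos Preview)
Your high-level architecture---set up an additive frame with $M_{\bff}=M$ using Theorem~\ref{L7}, collapse $|M|$ to $\aleph_0$, then build an injective non-surjective endomorphism by recursion---is exactly the paper's. But the concrete mechanism you propose for the recursion is not what the paper does, and as stated it has a gap.

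You plan to extract a $\kappa$-sequence of indiscernibles $\langle \overline a_\alpha\rangle$, define $\pi$ on them by the shift $\overline a_\alpha\mapsto\overline a_{\alpha+1}$, and then ``extend $\pi$ across elements that are algebraic over the indiscernibles'' using the $(<\kappa)$-algebraic closure. The problem is that $\rm{cl}$ of any sequence has size $<\kappa$ (Lemma~\ref{n11}(a)), so this only accounts for $<\kappa$ many elements of $M$; you have no mechanism for defining $\pi$ on the rest of $M$. Lemma~\ref{k11} does not say that \emph{every} affine shift along the indiscernibles preserves types---only that \emph{some} pair $\beta<\gamma$ does---so a global ``shift-by-one'' map is not justified by it. Also, you call the construction ``back-and-forth,'' but a genuine back-and-forth would force surjectivity; the construction must be forth-only.

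The paper's actual argument replaces your indiscernible-shift idea with two ingredients you did not mention. First, a case split: if there exist $\overline a,\overline b$ with $\varphi_{\overline a}(M)\supsetneqq\varphi_{\overline b}(M)$, then Fact~\ref{ef}(2) already gives an embedding $\pi$ sending $\overline a$ to $\overline b$, and if $\pi$ were onto one would get $\varphi_{\overline a}\le\varphi_{\overline b}$, a contradiction. Second, in the remaining case (all comparable $\varphi_{\overline a}(M)$ are equal), the recursion is driven not by a fixed sequence of indiscernibles but by an approximation set $\AP$ of triples $(\overline a,\overline b,c)$ with $\varphi_{\overline a}\equiv\varphi_{\overline b}$ and $c\notin\rm{cl}(\overline b,M)$; the fixed element $c$ is chosen at the outset as the witness to non-surjectivity. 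The heart of the proof is the one-step extension Claim~\ref{cla3}: given $(\overline a,\overline b,c)\in\AP$ and any $d_1\in M$, one can find $d_2$ with $(\overline a^\frown\langle d_1\rangle,\overline b^\frown\langle d_2\rangle,c)\in\AP$. This is where the closure under $\exists^\kappa$ and the analysis of $\rm{cl}$ (specifically Lemma~\ref{n11}(c)) are actually used, and it is what lets the map be defined on \emph{all} of $M$, not just on an indiscernible skeleton. Your final paragraph correctly identifies that the tension between ``homomorphism,'' ``injective,'' and ``misses $c$'' is the crux; the paper resolves it through this extension claim rather than through any global affine shift.
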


The organization of this paper is as follows.
 Szmielew \cite{sz} developed first order theory of abelian groups, see also \cite{Sh:54} for further discussions. In
Section 2  we review
infinitary languages and develop some parts of  abelian group theory in this context.  For this, we
use the concept of $\theta$-models from \cite{Sh:977}. We also introduce some sublanguages of the infinitary languages, which play some role in our later investigation.

For
Section 3, let us
   fix a pair $(\lambda, \theta)$ of regular cardinals   and let  $\kappa$ be as Theorem \ref{1.2}. The new object studied here is called the general frame and its enrichment the  \emph{additive frame}. Such a frame is of the form
	$$\bff:=(M, \cL, \lambda, \kappa, \theta, \Omega),$$
where $M$ comes from Theorem \ref{1.2}, and it has an additive  $\tau_{M}$-model of  cardinality $\geq \kappa$ and $\cL$ is a class of  certain  formulas in the vocabulary $\tau_{M}$. For more details, see
Definition \ref{k2}.
 The main result of Section 3 is Theorem  \ref{L7}. This gives us  an additive  frame $\bff$.

 Section 4 is about the concept of \emph{algebraic closure} in a frame (see Definition \ref{n8}).
 This enables us to improve Theorem  \ref{L7},
 which is needed in the sequel. For instance, see Lemma \ref{n11}.


 In Section 5 we put all things together and present the proof of Theorem \ref{1.2}.
Let $\chi:=|M|\geq\kappa$, and let $\mathbb{P}:=\rm{Col}(\aleph_{0},\chi)$.
Forcing with $\mathbb{P}$ enables us to collapse
$|M|$ into  $\aleph_0$, i.e., for any $\mathbb{P}$-generic filter $G_{\mathbb{P}}$
over $V$, we have
\begin{center}
	$V[G_{\mathbb{P}}]\models$``$M$ is  countable''.
\end{center}
We  show  in $V[G_{\mathbb{P}}]$, there exists a 1-1 map $\pi:M\to M$ which is not surjective.

We close  the introduction by noting that all groups (resp. rings) are abelian (resp. commutative), otherwise specialized, and
our notation is standard and follows that in Fuchs books \cite{Fuc73} and \cite{fuchs} and Eklof-Mekler \cite{EM02}.

\section{Infinitary languages}\label{2}

In the first subsection, we briefly review infinitary languages  and the concept of additive $\theta$-models, introduced in \cite{Sh:977}.
In the second subsection, we present basic properties  of affine subsets, i.e., ones closed under $x - y +z$.

\subsection{A review  of infinitary languages}
In this subsection we briefly review the infinitary logic, and refer to \cite{Bar73} and \cite{Sh:977}
for more information.

\begin{convention}
Given a model $M$, by $\tau_M$ we mean the language (or vocabulary) of the model $M$.
\end{convention}

\begin{notation}
\begin{enumerate}
\item By $\AB$ we mean the class of abelian groups.

\item Given a vocabulary $\tau$ which contains two place functions $+, -$, we define the affine operation $\affine( {x},   {y} ,  {z})$ as the three place
function $\affine( {x},   {y} ,  {z}):= {x} -  {y} +  {z}$.
\end{enumerate}
\end{notation}

\begin{definition}\label{a2}
	Let $M$ be a model of  vocabulary $\tau_M$.
\begin{enumerate}\item We say  $M$ is an \emph{additive $\theta$-model} when:
        \begin{enumerate}[(a)]
            \item the two place function symbols $+, -$ and the constant symbol $0$ belong to $\tau_{M},$

            \item $G_{M} = (\vert M \vert, +^{M}, -^{M}, 0^{M})\in \AB$,

            \item $R^{M}$ is a subgroup of ${}^{n}(G_{M})$, for any predicate symbol $R \in \tau_M$   with $\rm{arity(R)} = n$,

            \item $F^{M}$ is a homomorphism from ${}^{n}(G_{M})$ into $G_{M}$, for any function symbol $F \in \tau_M$   with arity $n,$

            \item $\tau_{M}$ has cardinality $\leq \theta.$ 
        \end{enumerate}
\item For  an  additive $\theta$-model  $M$, we say   $X \subseteq {M}$ is \it{affine} if
    $X$ is closed  under the affine  operation $\affine( {x},   {y} ,  {z})$. In other words, ${a} - {b} + {c} \in X$
    provided that
    ${a}, {b},{c} \in X$.

\item We say  $M$ is an  affine $\theta$-model provided:
     \begin{enumerate}[(a)]
    	\item we do not necessarily have  $+, -,0$ in the vocabulary, but only the three place
    	function $\affine(x,y,z)$,
    	\item
    	if $R\in\tau_M$ is  an  $n$-place predicate and $\overa_{l}= \langle a_{l, i}: i<n  \rangle\in R^M$ for $l=0,1,2$ and $$\overb:=\affine(\overa_0, \overa_1, \overa_2)=\big\langle \affine(a_{0, i},a_{1, i},a_{2, i}): i<n\big\rangle,$$ then $\overb\in R^M$,
    	
    	\item for any $n$-place function symbol $F \in \tau_M$ and $\overa_{l}= \langle a_{l, i}: i<n  \rangle\in {}^{n}M$, for $l=0,1,2$,
    we have
    \[
    F^M(\affine(\overa_0, \overa_1, \overa_2)) = \affine(F^M(\overa_0), F^M(\overa_1), F^M(\overa_2)),
    \]

    	\item $\tau_{M}$ has cardinality $\leq \theta.$
    \end{enumerate}
 \item
Suppose $M$ is an  affine $\theta$-model. We say
$M$ is truly affine
provided
for some fixed $a\in M$ and for the following interpretation
\begin{itemize}
	
	\item  $x+y :=\affine(x,a,y)= x-a+y,$
	
	\item $x-y := \affine(x,y,a)= x-y+a,$
	
	\item $0:=a$,
\end{itemize}
 then	we get an abelian group, and hence an additive $\theta$-model.
 \end{enumerate}
 We may omit $\theta$ if it is clear from the context.
 \end{definition}

\begin{remark}
i) A natural question arises: Is  any affine $\theta$-model  truly affine? Not necessarily this holds, see Example \ref{ntrueaf}, below.

ii) More generally, we can replace  $\{+,-,\affine\}$ for a set $\tau_f$ of beautiful function from \cite{61}.
The corresponding  result  holds in this frame.
	\end{remark}

\begin{example}\label{ntrueaf}
Let $G$ be an abelian group, $H$ be a proper subgroup of it and $a \in G\setminus H.$ Define $M$ as follows:
\begin{itemize}
	
	\item the universe of $ M$ is $a+H,$
	
	\item $\tau_{M} := \{+, -, \affine\}$,

\item $+^M$ and $-^M$ are $+^G \restriction M$ and $-^G \restriction M$ respectively,
	
	\item $\affine^{M}:= \affine^G \restriction M$, where $\affine^G=\{x-y+z: x, y, z \in G   \}$.
\end{itemize}
Then the following two assertions hold:
\begin{itemize}
	
	\item[a)] $M$ is an  affine $\aleph_0$-model, isomorphic to $H$.
	
	\item[b)] $M$ is not  an
abelian group.
\end{itemize}
	\end{example}
 \begin{definition}
 \label{a22}  (1) We say a class $K$ of models is an \emph{additive $\theta$-class},  when
 	 $M$ is an additive $\theta$-model for all
 	 $M  \in K$, and $$ \tau_{M} = \tau_{N}\quad \forall M,N  \in K.$$ We denote the resulting common language  by $\tau_K.$

(2) Similarly, one can define  affine $\theta$-classes.
\end{definition}

\begin{hypothesis}\label{m2}
Let	$\Omega$ be a set of cardinals with $1\in\Omega$ and members of $\Omega\setminus \{1\}$ are  infinite cardinals.
\end{hypothesis}

\begin{notation}\label{z2}
	\begin{enumerate}
		\item By $\overline{x}_{[u]}$ or $\overline{x}_{u}$  we mean $\langle x_{\alpha}: \alpha \in u \rangle.$ So, with no repetition.
		
		\item Suppose $\varphi(\overline{x}_{u})$ is a formula. By $\varphi(M)$ we mean $ \{ \overa \in {}^{u}M: M \models \varphi[\overa] \}.$ \item For a formula $\varphi(\overline{x}_{[u]}, \overline{y}_{[v]})$ and $\overb \in {}^{v}M,$ we let $$\varphi(M, \overb) := \big\{ \overa \in {}^{u}M: M \models \varphi[\overa, \overb] \big\}.$$
		
		
		\item Given a sequence $t$, by $\rm{lg}(t)$ we mean the length  of $t$. \end{enumerate}
\end{notation}

\begin{definition}
	Suppose $\kappa$ and $\mu$
	are infinite cardinals, which we allow to be $\infty$. The infinitary language $\mathcal{L}_{\mu, \kappa}(\tau)$
	is defined so as its vocabulary is the same as $\tau,$ it has the same terms and atomic formulas as in $\tau,$ but we also allow conjunction and disjunction of length less than $\mu$, i.e., if $\phi_j,$ for $j<\beta < \mu$ are formulas, then so are $\bigvee_{j<\beta}\phi_j$ and $\bigwedge_{j<\beta}\phi_j$. Also, quantification over less than $\kappa$ many variables (i.e., if $\phi=\phi((v_i)_{i<\alpha})$, where $\alpha < \kappa$, is a formula, then so are $\forall_{i<\alpha}v_i \phi$ and $\exists_{i<\alpha}v_i\phi$).
\end{definition}
Note that $\mathcal{L}_{\omega, \omega}(\tau)$ is just the first order logic with vocabulary $\tau.$
Given $\kappa$, $\mu$ and $\tau$ as above, we are sometimes interested in some special formulas from $\mathcal{L}_{\mu, \kappa}(\tau)$.
\begin{definition}
\label{a21}
	Suppose $\kappa$ and $\lambda$ are infinite cardinals or possibly $\infty$. We define the logic $\cL_{\lambda, \kappa,\Omega}$ as follows:
	\begin{enumerate}
		\item For a vocabulary $\tau$, the language
$\cL_{\lambda, \kappa,\Omega}(\tau)$ is defined as the set of formulas with $<\kappa$ free variables (without loss of generality they are subsets of $\{x_{\zeta}:\zeta<\kappa\}$, see Discussion \ref{dis1})
	which is the closure of the set of basic formulas , i.e., atomic and the negation of atomic formulas, under:
	
	\begin{itemize}
	
	\item[(a)] conjunction of $<\lambda $
	formulas,
	\item [(b)] disjunction of $<\lambda $
	formulas, 
	
	\item [(c)] For any $\sigma\in\Omega$, 	\begin{itemize}
		\item[($c_{1}$)] $\varphi(\overline{x} ):=(\exists^{\sigma}\overline{x}')\psi(\overline{x},\overline{x}')$, or
		\item[($c_{2}$)] $\varphi(\overline{x} ):=(\forall^{\sigma}\overline{x}')\psi(\overline{x},\overline{x}')$,
\end{itemize}
 where $\psi(\overline{x},\overline{x}')$ is a formula.
We usually omit $\sigma$,  if  $\sigma=1$.
\end{itemize}
We usually omit $\Omega$ if it is clear from the context.
		\item
Satisfaction is defined as usual, where for the formulas $\varphi(\overline{x} ):=(\exists^{\sigma}\overline{x}')\psi(\overline{x},\overline{x}')$ and
		 $\varphi(\overline{x} ):=(\forall^{\sigma}\overline{x}')\psi(\overline{x},\overline{x}')$, it is defined as:
\begin{itemize}
	
	\item[(a)] If $\varphi(\overline{x}):=(\exists^{\sigma}\overline{x}')\psi(\overline{x},\overline{x}')$,
	 $M$ is a $\tau$-model, and $\overa \in {}^{\lg(\overline{x})}M$, then  $M\models\varphi[\overline{a}]$ if and only if there are
	$\overb_\varp \in {}^{\lg(\overline{x}')}M$ for all $\varp<\sigma$  pairwise distinct  such that $M\models \psi[\overline{a},\overline{b}_{\varp}]$ for all $\varp<\sigma$.

		\item[(b)] If $\varphi(\overline{x} ):=(\forall^{\sigma}\overline{x}')\psi(\overline{x},\overline{x}')$, then $$M\models\varphi(\overline{x} )  \iff M\models \neg\big[\exists^{\sigma}\overline{x}' \neg\big(\psi(\overline{x},\overline{x}')\big)\big].$$
Note that $\neg(\psi(\overline{x},\overline{x}'))$ is not necessarily in $\cL_{\lambda, \kappa,\Omega}(\tau)$.
\end{itemize}
\end{enumerate}
\end{definition}

\begin{remark}It may be worth to mention that
$\cL_{\lambda, \kappa,\Omega}(\tau)$
is a generalization of the infinitary language $\mathcal{L}_{\mu, \kappa}(Q)$ when
$\Omega:=\{1,\aleph_1\}$.
\end{remark}
\begin{discussion}
\label{dis1}
Given a formula $\varphi$ in $\mathcal{L}_{\infty, \theta}(\tau)$ with free variables $\overline{x}_{\varphi},$ we can always assume that  $\overline{x}_{\varphi}=\langle x_\zeta:\zeta\in u_\varphi\rangle, $ for some $u_\varphi\in[\theta]^{<\theta}$. The key point is that if $\varphi=\varphi(\overline{x}),$ where $\overline{x}= \langle x_\zeta:\zeta\in w \rangle,$ where $w$ is a set of ordinals of size less than $\theta$, and if $f: w \leftrightarrow u$
is a bijection where $u \in [\theta]^{<\theta}$, and
$
\psi(\overline{x}) \equiv \Sub_f^{\overline{x}}(\varphi),
$
where $\Sub_f^{\overline{x}}(\varphi)$ is essentially the formula obtained from $\varphi$ by replacing the variable $x_\zeta$ by $x_{f(\zeta)}$, then if $\bar a \in$$^{w}M$, $\bar b \in$$^{u}M$
and $a_\zeta = b_{f(\zeta)}$, for $\zeta \in w$, then
\[
M \models \varphi[\bar a] \iff M \models \psi[\bar b].
\]
We can similarly assume that all bounded variables are from $\{x_i: i<\theta  \}$.
\end{discussion}

	\begin{convention}
In what follows, saying closed under $\exists$  (resp. $\forall$) means under all $\exists^{\sigma}$ (resp. $\forall^{\sigma}$).
\end{convention}
In the next definition, we consider some classes of infinitary formulas that we will work with them latter.
\begin{definition}
	\label{def1}
	Suppose $\theta$ is an infinite cardinal, or $\infty,$ and suppose $\tau$ is a language. Here, we collect some
	infinitary subclasses of the language $\cL_{\infty, \theta}(\tau)$:
	\begin{enumerate}
		\item $\cL_{\infty, \theta}^{\rm{cop}}(\tau)$ is the class of   conjunction-positive formulas, i.e., the closure of atomic
		formulas under $\bigwedge, \exists, \forall$.
		
		\item  $\cL_{\infty, \theta}^{\rm{cpe}}(\tau)$  is the class of   conjunction-positive existential formulas, i.e., the closure of atomic
		formulas under $\bigwedge$ and $\exists$.
		
			\item  $\cL_{\infty, \theta}^{\rm{co}}(\tau)$  is the closure of atomic formulas and $x_i \neq x_j$ under  $\bigwedge$, $\exists$ and $\forall$.
		
			\item  $\cL_{\infty, \theta}^{\rm{ce}}(\tau)$  is the closure of atomic formulas and $x_i \neq x_j$ under $\bigwedge$ and $\exists$.
	\end{enumerate}
\end{definition}
We shall use freely the following simple fact.
\begin{fact}
\label{lem1}
$\cL_{\infty, \theta}^{\rm{co}}(\tau) \supseteq \cL_{\infty, \theta}^{\rm{cop}}(\tau) \cup \cL_{\infty, \theta}^{\rm{ce}}(\tau) \supseteq \cL_{\infty, \theta}^{\rm{cop}}(\tau) \cap \cL_{\infty, \theta}^{\rm{ce}}(\tau) \supseteq \cL_{\infty, \theta}^{\rm{cpe}}(\tau)$.
\end{fact}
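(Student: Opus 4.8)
The plan is to observe that each of the four classes in Definition \ref{def1} is an inductively generated class --- the least class of formulas containing a designated set of generators and closed under a designated set of logical operations --- and then to invoke the evident monotonicity of such closures in both the generators and the operations. Concretely, write $\mathrm{Cl}(S,\mathcal{O})$ for the least class of $\cL_{\infty,\theta}(\tau)$-formulas that contains the set $S$ and is closed under each operation in $\mathcal{O}$, let $\mathrm{At}$ denote the atomic formulas, and let $\mathrm{At}^{\neq}$ denote the atomic formulas together with the inequalities $x_i\neq x_j$. Then the definitions read
\[
\cL_{\infty,\theta}^{\rm{cpe}}(\tau)=\mathrm{Cl}(\mathrm{At},\{{\textstyle\bigwedge},\exists\}),\qquad \cL_{\infty,\theta}^{\rm{cop}}(\tau)=\mathrm{Cl}(\mathrm{At},\{{\textstyle\bigwedge},\exists,\forall\}),
\]
\[
\cL_{\infty,\theta}^{\rm{ce}}(\tau)=\mathrm{Cl}(\mathrm{At}^{\neq},\{{\textstyle\bigwedge},\exists\}),\qquad \cL_{\infty,\theta}^{\rm{co}}(\tau)=\mathrm{Cl}(\mathrm{At}^{\neq},\{{\textstyle\bigwedge},\exists,\forall\}).
\]

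First I would record the monotonicity principle: if $S_1\subseteq S_2$ and $\mathcal{O}_1\subseteq\mathcal{O}_2$, then $\mathrm{Cl}(S_1,\mathcal{O}_1)\subseteq\mathrm{Cl}(S_2,\mathcal{O}_2)$. This is immediate from minimality: the class $\mathrm{Cl}(S_2,\mathcal{O}_2)$ contains $S_2\supseteq S_1$ and is closed under every operation in $\mathcal{O}_2$, hence in particular under every operation in $\mathcal{O}_1$; thus it is itself a class containing $S_1$ and closed under $\mathcal{O}_1$, so by the minimality defining $\mathrm{Cl}(S_1,\mathcal{O}_1)$ it contains $\mathrm{Cl}(S_1,\mathcal{O}_1)$.

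With this in hand the three inclusions are pure bookkeeping. For the innermost one, applying monotonicity with fixed generators $\mathrm{At}$ while enlarging the operations gives $\cL_{\infty,\theta}^{\rm{cpe}}(\tau)\subseteq\cL_{\infty,\theta}^{\rm{cop}}(\tau)$, and applying it with fixed operations $\{\bigwedge,\exists\}$ while enlarging the generators from $\mathrm{At}$ to $\mathrm{At}^{\neq}$ gives $\cL_{\infty,\theta}^{\rm{cpe}}(\tau)\subseteq\cL_{\infty,\theta}^{\rm{ce}}(\tau)$; together these yield $\cL_{\infty,\theta}^{\rm{cpe}}(\tau)\subseteq\cL_{\infty,\theta}^{\rm{cop}}(\tau)\cap\cL_{\infty,\theta}^{\rm{ce}}(\tau)$. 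The middle inclusion is the trivial set-theoretic fact $A\cap B\subseteq A\cup B$. For the outermost inclusion, monotonicity gives $\cL_{\infty,\theta}^{\rm{cop}}(\tau)\subseteq\cL_{\infty,\theta}^{\rm{co}}(\tau)$ (enlarge the generators) and $\cL_{\infty,\theta}^{\rm{ce}}(\tau)\subseteq\cL_{\infty,\theta}^{\rm{co}}(\tau)$ (enlarge both generators and operations), so their union is contained in $\cL_{\infty,\theta}^{\rm{co}}(\tau)$.

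Since the statement is labelled a Fact that we shall ``use freely,'' I do not expect any genuine obstacle here. The only point requiring a moment's care is to confirm that passing from the positive classes to those allowing inequalities always \emph{enlarges} (and never restricts) the generating set, so that all the monotonicity applications point in the direction claimed --- which they do, because $\mathrm{At}\subseteq\mathrm{At}^{\neq}$ and $\{\bigwedge,\exists\}\subseteq\{\bigwedge,\exists,\forall\}$.
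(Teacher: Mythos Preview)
Your proof is correct and is exactly the obvious argument; the paper itself offers no proof at all, simply recording the inclusion chain as a ``simple fact'' to be used freely. One tiny slip in your write-up: for $\cL_{\infty,\theta}^{\rm{ce}}(\tau)\subseteq\cL_{\infty,\theta}^{\rm{co}}(\tau)$ you say ``enlarge both generators and operations,'' but in fact the generators are the same ($\mathrm{At}^{\neq}$) and only the operations are enlarged --- this does not affect the argument.
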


The following lemma is easy to prove.

\begin{lemma}\label{a5}
	Assume $M$ is an additive $\theta$-model, $\tau = \tau_{M}$ and
	$\varphi(\overline{x}_u) \in \cL_{\infty, \infty}(\tau)$ with  $\varp=\rm{lg}(\overline{x})$. The following assertions are valid:
	\begin{enumerate}
		\item If $\varphi(\overline{x}_u) \in \cL_{\infty, \infty}^{\rm{cop}}(\tau)$, then
		$\varphi(M) $ is a subgroup of ${}^{u}M.$

		\item If $\varphi(\overline{x}_u) \in \cL^{\rm{cpe}}_{\infty, \infty}(\tau)$,
		$f \in \rm{End}(M)$ and $M \models \varphi[\overline{a}]$, then $M \models \varphi[f(\overline{a})].$
		
		\item If $\varphi(\overline{x}_u) \in \cL^{\rm{cpe}}_{\infty, \theta}(\tau)$, $M, N$ are $\tau$-models and $f: M \to N$ is a homomorphism, then $f$ maps $\varphi(M)$
		into $\varphi(N).$
		
		\item If $\varphi(\overline{x}_u) \in \cL^{\rm{ce}}_{\infty, \theta}(\tau)$, $M, N$ are $\tau$-models and $f: M \to N$ is a 1-1 homomorphism, then $f$ maps $\varphi(M)$
		into $\varphi(N).$

        \item If $\varphi(\overline{x}_u) \in \cL^{\rm{co}}_{\infty, \theta}(\tau)$, $M, N$ are $\tau$-models and $f: M \to N$ is a bijection, then $f$ is an isomorphism from $\varphi(M)$
		onto $\varphi(N).$

\item Assume $\psi(\bar y)$ is obtained from $\varphi(\bar x)$ by adding dummy variables, permuting the variables and substitution not identifying variables. Then
\[
\psi(\bar y) \in \cL^{\ast}_{\infty, \theta}(\tau) \iff  \varphi(\bar x) \in \cL^{\ast}_{\infty, \theta}(\tau),
\]
where $\ast \in \{\rm{cop}, \rm{cpe}, \rm{ce}, \rm{co}   \}$.
	\end{enumerate}
\end{lemma}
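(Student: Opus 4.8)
The plan is to prove (1) first, then to deduce (2)--(5) by induction on the build-up of $\varphi$ while invoking (1) at the quantifier steps, and finally to dispatch (6) by a separate, purely syntactic induction. The one structural fact that drives everything is this: in an additive $\theta$-model a conjunction-positive formula defines a subgroup whose nonempty fibres are all cosets of a single fixed subgroup, so they share one cardinality. I would isolate this at the outset.

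For (1) I would start with the base case. In an additive $\theta$-model every term $t(\overline{x}_u)$ is a group homomorphism ${}^{u}M \to M$: the symbols $+,-$ are the group operations and every $F\in\tau_M$ is a homomorphism by Definition \ref{a2}(d), which for arity $0$ even forces each constant to be $0$. Every atomic relation $R^M$ is a subgroup of ${}^{n}M$ by Definition \ref{a2}(c). Hence an atomic formula $R(\overline{t})$ defines the preimage $\overline{t}^{-1}(R^M)$, a subgroup, and an equation $t_1=t_2$ defines $\ker(t_1-t_2)$, again a subgroup containing $\overline{0}$. The step for $\bigwedge$ is that an intersection of subgroups is a subgroup. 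For the quantifier steps I would use the coset structure: if $\psi(\overline{x}_u,\overline{x}'_v)$ defines a subgroup $\psi(M)\leq {}^{u\cup v}M$, then each fibre $\psi(M)_{\overline{a}}:=\{\overline{b}:(\overline{a},\overline{b})\in\psi(M)\}$ is empty or a coset of $\psi(M)_{\overline{0}}$, so all nonempty fibres have cardinality $|\psi(M)_{\overline{0}}|$. Thus $M\models(\exists^{\sigma}\overline{x}')\psi[\overline{a}]$ holds exactly when $\overline{a}$ lies in the projection $H=\{\overline{a}:\psi(M)_{\overline{a}}\neq\emptyset\}$ and $|\psi(M)_{\overline{0}}|\geq\sigma$; as the second condition is independent of $\overline{a}$, the solution set is the subgroup $H$ or else empty. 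The $\forall^{\sigma}$ case is dual: via the ``all but $<\sigma$'' reading of Definition \ref{a21}(2)(b) it depends only on whether a nonempty coset is co-$<\sigma$ in ${}^{v}M$, again intrinsic to $M$, and yields $\emptyset$, $H$, or all of ${}^{u}M$.

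For (2)--(5) I would induct on $\varphi$, tracking the action of $f$. Homomorphisms preserve positive atomic formulas (they commute with terms and carry $R^M$ into $R^N$) and commute with $\bigwedge$; an injective $f$ moreover preserves each $x_i\neq x_j$, and an isomorphism preserves and reflects all basic formulas. The only delicate step is the counting quantifier. For (2), where $f\in\End(M)$ acts on a fixed additive model, I would invoke (1): since $\psi\in\cL^{\rm{cpe}}\subseteq\cL^{\rm{cop}}$, the set $\psi(M)$ is a subgroup whose fibre cardinality $|\psi(M)_{\overline{0}}|$ is intrinsic to $M$, so whether $(\exists^{\sigma}\overline{x}')\psi$ holds at $\overline{a}$ depends only on whether $\overline{a}$ lies in the projection subgroup $H$; as $f$ maps $H$ into $H$, the formula is preserved. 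For (4), where $f$ is injective, no subgroup structure is needed (and indeed $\cL^{\rm{ce}}$ formulas need not define subgroups): $\sigma$ pairwise distinct witnesses in $M$ have $\sigma$ pairwise distinct images, so $(\exists^{\sigma}\overline{x}')\psi$ and hence every $\cL^{\rm{ce}}$ formula is carried into $N$. For (5), with $f$ an isomorphism, this transfer runs in both directions and also settles $\forall^{\sigma}$ and $\neq$, giving the bijection of $\varphi(M)$ onto $\varphi(N)$. The case I expect to cost the most care is (3): a homomorphism may identify distinct witnesses, so for plain existentials ($\sigma=1$) preservation is the classical positive-existential argument, but for infinite $\sigma$ one must argue that the witness subgroup $\psi(M)_{\overline{0}}$ is carried onto a subgroup still of cardinality $\geq\sigma$ -- this is the crux, and it is exactly the point at which the injectivity present in (4) but absent in (3) is the decisive feature.

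Finally, for (6) I would argue syntactically by induction on the formation of $\varphi$. Adding dummy variables, permuting variables, and performing a substitution that does not identify variables each send a basic formula to a basic formula of the same kind -- an atomic formula stays atomic, and $x_i\neq x_j$ stays some $x_{i'}\neq x_{j'}$ with $i'\neq j'$ precisely because no identification occurs -- and each operation commutes with $\bigwedge$, $\exists^{\sigma}$ and $\forall^{\sigma}$. Hence they preserve membership in each of $\cL^{\rm{cop}}$, $\cL^{\rm{cpe}}$, $\cL^{\rm{ce}}$, $\cL^{\rm{co}}$, which differ only in which basic formulas and closure operations are admitted; since the operations are reversible on their image, both directions of the stated equivalence follow.
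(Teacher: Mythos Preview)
Your overall strategy --- induction on the build-up of $\varphi$, handling each closure operation in turn --- is precisely the paper's. The paper is brisker: at the $(\exists^\sigma \bar y)\varphi$ step in clause~(1) it simply picks a single witness $\bar b_\ell$ for each $\bar a_\ell$ and from $M\models\varphi[\bar a_0-\bar a_1,\bar b_0-\bar b_1]$ concludes $M\models\psi[\bar a_0-\bar a_1]$; clauses (2)--(5) receive the same one-line treatment. Your coset/fibre argument is the honest version of this when $\sigma>1$: the paper's single-witness subtraction silently relies on the fact that once the fibre over $\bar a_0-\bar a_1$ is nonempty it is a coset of the zero-fibre and hence already has cardinality $\ge\sigma$. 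So your extra structural remark is doing real work that the paper leaves implicit.

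You are also right to single out clause~(3). The paper dispatches it with ``as in clause~(2)'', but when $M\ne N$ the zero-fibre of $\varphi$ computed in $N$ need not be as large as in $M$, and a non-injective $f$ can genuinely collapse the witness set (take $f\colon\mathbb Z\to 0$ against $\varphi(x)=(\exists^{\aleph_0}y)(x=x)$). Your diagnosis that injectivity is the missing hypothesis here is correct: clause~(3) as written is only safe for $\sigma=1$, and that is all the paper actually uses downstream.
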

\begin{proof}
The proof is  by induction on the complexity of the formulas. For completeness, we sketch the proof.
If the formula is an atomic formula, then it is evident that all of the above items are satisfied. It is also easy to see that each item is preserved under $\bigwedge$, in the sense that
if $\psi= \bigwedge_{i \in I}\varphi_i$ is well-defined and the lemma holds for each $\varphi_i$, then it holds for $\psi$.

We now consider the case where $\psi(\bar x)= (\exists^\sigma \bar y)\varphi(\bar x, \bar y)$,
and assume the induction hypothesis holds for $\varphi.$
We consider each clause separately, assuming in each case, the formula $\varphi$ is in the assumed language.
\begin{enumerate}
\item[]{\bf Clause (1)}: Suppose $\varphi(M)$ is a subgroup of $^{\rm{lg}(\bar x)+\rm{lg}(\bar y)}M$. We show that
 $\psi(M)$ is a subgroup of $^{\rm{lg}(\bar x)}M$. To see this, let $\bar a_0, \bar a_1 \in \psi(M)$. Then for some $\bar b_0$
 and $\bar b_1$ we have $$M \models `` \varphi[\bar a_0, \bar b_0]\emph{ and }\varphi[\bar a_1, \bar b_1]''.$$ By  induction,
 $M \models$$\varphi[\bar a_0-\bar a_1, \bar b_0-\bar b_1]$,
 hence $M \models \psi[\bar a_0-\bar a_1].$ Thus $\bar a_0-\bar a_1 \in \psi(M)$.

\item[]{\bf Clause (2)}: Suppose $M \models$$\psi[\bar a]$. Then for some $\bar b$, we have
$M \models \varphi[\bar a, \bar b]$. By the induction,  $M \models \varphi[f(\bar a), f(\bar b)]$,
and hence $M \models \psi[f(\bar a)]$, as requested.

\item[]{\bf Clause (3)}: As in clause (2), we can show that if $M \models$$\psi[\bar a]$,
then $N \models$$\psi[f(\bar a)]$, and this gives the required result.

\item[]{\bf Clause (4)}: As in clause (3). The assumption of $f$ being 1-1 is used to show that if $x_i \neq x_j$, then $f(x_i) \neq f(x_j)$.

\item[]{\bf Clause (5)}: As in clause (4).

\item[]{\bf Clause (6)}: This is easy.
\end{enumerate}

Finally, suppose that  $\psi(\bar x)= (\forall^\sigma \bar y)\varphi(\bar x, \bar y)$, and assume the induction hypothesis holds for $\varphi$. We only have to consider items (1) and (5).
\begin{enumerate}
\item[]{\bf Clause (1)}: Suppose $\varphi(M)$ is a subgroup of $^{\rm{lg}(\bar x)+\rm{lg}(\bar y)}M$. We show that
 $\psi(M)$ is a subgroup of $^{\rm{lg}(\bar x)}M$. To see this,
 let $\bar a_0, \bar a_1 \in \psi(M)$. We have to show that $\bar a_0$-$\bar a_1 \in \psi(M)$.   Thus let $\bar b \in$$^{\rm{lg}(\bar y)}M$.
By the induction hypothesis, $$M \models `` \varphi[\bar a_0, \bar b]\emph{ and }\varphi[\bar a_1, \bar 0]''.$$ Thanks to  induction,
 $M \models$$\varphi[\bar a_0-\bar a_1, \bar b-\bar 0]$. As this holds for all $\bar b$,
 we have  $M \models \psi[\bar a_0-\bar a_1].$  Thus $\bar a_0-\bar a_1 \in \psi(M)$, as requested.

\item[]{\bf Clause (5)}: As before, we can easily show that $f$ maps $\psi(M)$
		into $\psi(N).$ To see it is onto, let $\bar c \in \psi(N)$. Then $N \models \psi[\bar c]$. As $f$ is onto,
for some $\bar a$ we have $\bar c=f(\bar a)$. We have to show that $\bar a \in \psi(M)$. Thus let $\bar b \in$$^{\rm{lg}(\bar y)}M$.
Then $\bar d=f(\bar b) \in$$^{\rm{lg}(\bar y)}N$, and by our assumption, $N \models \varphi[\bar c, \bar d]$. As $f$ is an isomorphism, $M \models \varphi[\bar a, \bar b]$.
As $\bar b$ was arbitrary, $M \models \psi[\bar a]$, i.e., $\bar a \in \psi(M)$.

\end{enumerate}
The lemma follows.
\end{proof}

Let us restate the above result in the context of $R$-modules:
\begin{corollary}\label{L4}
Let  $M$ be an $R$-module.
\begin{enumerate}
	\item If $\varphi(\overline{x}_{u}) \in \cL_{\infty, \theta}^{\rm{{cpe}}},$
	then
	$\varphi(M) $ is an abelian subgroup of $ ({}^{u}M, +).$

\item Similar result holds for formulas $\varphi(\overline{x}_{u}) \in \cL_{\infty, \theta}^{\rm{{cop}}}.$
\end{enumerate}	
Furthermore, if $R$ is commutative, then in the above, $\varphi(M) $ becomes a submodule of $ ({}^{u}M, +).$
\end{corollary}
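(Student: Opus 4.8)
The plan is to view an $R$-module $M$ as an additive $\theta$-model in the sense of Definition \ref{a2} and then read everything off from Lemma \ref{a5}. First I would fix the vocabulary $\tau_M := \{+,-,0\} \cup \{F_r : r \in R\}$, where $F_r$ is a unary function symbol interpreted by scalar multiplication $\mu_r\colon x \mapsto rx$; there are no predicate symbols. Clauses (a)--(e) of Definition \ref{a2}(1) then hold, the only nonvacuous point being clause (d): each $F_r^M = \mu_r$ is an endomorphism of $G_M$ since $r(x+y)=rx+ry$. Hence Lemma \ref{a5} is available with $\tau = \tau_M$.

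For parts (1) and (2) I would invoke Fact \ref{lem1}, which gives $\cL^{\mathrm{cpe}}_{\infty,\theta}(\tau) \subseteq \cL^{\mathrm{cop}}_{\infty,\theta}(\tau)$; thus it suffices to treat the cop case, and this is precisely Lemma \ref{a5}(1): for $\varphi \in \cL^{\mathrm{cop}}_{\infty,\theta}$ the set $\varphi(M)$ is a subgroup of $({}^{u}M,+)$. This settles the abelian-subgroup assertion in (2) and, \emph{a fortiori}, in (1).

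For the ``furthermore'' clause I only need to upgrade ``subgroup'' to ``submodule'', i.e. to add closure of $\varphi(M)$ under each $\mu_r$. Commutativity enters exactly here: $\mu_r$ commutes with every $\mu_s$ because $rs=sr$, so $\mu_r$ is an endomorphism of $M$ as a $\tau_M$-structure, i.e. $\mu_r \in \mathrm{End}(M)$ (there being no predicates to check). For $\varphi \in \cL^{\mathrm{cpe}}_{\infty,\theta}$ the conclusion is then immediate from Lemma \ref{a5}(2): if $\bar a \in \varphi(M)$ then $\mu_r(\bar a) = r\bar a \in \varphi(M)$, so $\varphi(M)$ is a submodule.

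The cop case is where the main obstacle lies, since Lemma \ref{a5}(2) genuinely fails once a $\forall$-quantifier is present (preservation under non-surjective endomorphisms breaks down), so I cannot simply apply $\mu_r$. Instead I would re-run the induction proving Lemma \ref{a5}(1), additionally tracking closure under $\mu_r$. For an atomic formula the solution set of a term-equation $\sum_i c_i x_i = 0$ is the kernel of an $R$-linear map, where commutativity is used to see $\sum_i c_i(r x_i) = r\sum_i c_i x_i$, hence a submodule; conjunctions and $\exists$ (projections) preserve submodules; and for $\varphi(\bar x) = (\forall y)\psi(\bar x, y)$ I would argue exactly as in the $\forall$-case of Lemma \ref{a5}(1): given $\bar a \in \varphi(M)$ and $b \in M$, $\mu_r$-closure of $\psi(M)$ yields $(r\bar a, 0) \in \psi(M)$, while $\bar 0 \in \varphi(M)$ gives $(\bar 0, b) \in \psi(M)$, and adding these two elements of the subgroup $\psi(M)$ produces $(r\bar a, b) \in \psi(M)$; as $b$ was arbitrary, $r\bar a \in \varphi(M)$. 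This patching via the zero element is the same device used for the universal quantifier in Lemma \ref{a5}(1), and it is the only step that is not purely formal.
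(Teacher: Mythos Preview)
Your proposal is correct and follows the same overall approach as the paper, which states the corollary immediately after Lemma \ref{a5} with no separate proof, treating it as a direct specialization to $R$-modules viewed as additive $\theta$-models. Parts (1) and (2) are indeed just Lemma \ref{a5}(1), and the ``furthermore'' for cpe formulas is Lemma \ref{a5}(2) applied to $\mu_r$, exactly as you say.

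The one place where you genuinely add something the paper leaves implicit is the cop case of the ``furthermore''. You correctly observe that Lemma \ref{a5} gives no preservation of cop formulas under non-bijective endomorphisms, so $\mu_r$ cannot be applied directly; your re-run of the induction, using the additive trick $(r\bar a,\bar b)=(r\bar a,\bar 0)+(\bar 0,\bar b)$ with $(\bar 0,\bar b)\in\psi(M)$ coming from $\bar 0\in\varphi(M)$, is exactly the right fix and mirrors the $\forall$-case argument in the proof of Lemma \ref{a5}(1). The paper does not spell this out (it only remarks afterward why commutativity is necessary, not why it suffices), so your write-up is in fact more complete on this point.
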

\begin{remark}
If $R$ is not commutative, then $\varphi(M) $ is not necessarily a submodule. To see this, suppose $a, b, c \in R$ are such that $abc \neq bac,$
and suppose $M$ is a left $R$-module. Define $\varphi(x,y):=``
y=ax''$.
Now note that $(c, ac) \in \varphi(M).$ If $\varphi(M)$ is a submodule, then we must have $(bc, bac) \in \varphi(M).$ Hence
$
bac = ax = abc,
$
which contradicts $abc \neq bac.$
\end{remark}

\subsection{More on affineness}\footnote{The results of this subsection are independent from the rest of
	the paper.}
In this subsection we try replacing subgroups by affine subsets.
The main result of  this subsection is Proposition
\ref{m8}. First,  we fix the hypothesis and present the corresponding definitions.
Here, affinity demand relates only to the formulas, not the content.
\begin{hypothesis}\label{m2}
	\begin{enumerate}
		\item $R$ is a ring,
		
		\item $M$ is an $R$-module,
		
		\item Let
		 $\lambda, \kappa$ be regular and $\lambda \geq \kappa \geq \theta\geq |\Omega|+|\tau_M|,$  where $\Omega$ is a set of cardinals such that $1 \in \Omega$
		and all other cardinals in it are infinite.
	\end{enumerate}
\end{hypothesis}

\begin{definition}\label{m5a}
	Let $\affine_{1}$ be the set of all formulas $\varphi(\overline{x}) \in \cL_{\infty, \theta}(\tau_{M})$ so that $\rm{lg}(\overline{x}) < \theta$ and
	$\varphi(M)$ is closed  under $\overline{x} - \overline{y} + \overline{z}$. In other words, $\overline{a} - \overline{b} + \overline{c} \in \varphi(M)$
	provided that
	$\overline{a}, \overline{b}, \overline{c} \in \varphi(M)$.
\end{definition}
We now define another class $\affine_{2}$ of formulas of $ \cL_{\infty, \theta}(\tau_{M})$,
and show that it is included in $\affine_{1}$. To this end, we first make the following definition.
\begin{definition}
	\label{m3} Suppose $\alpha_*$ is an ordinal. Let $\varphi(\overline{x}, \overline{y})$ and $\overline{\psi}(\overline{x}, \overline{y}) = \langle \psi_{\alpha}(\overline{x}, \overline{y}): \alpha < \alpha_{*} \rangle$
	be a sequence of formulas from $ \cL_{\infty, \theta}(\tau_{M})$. Let $\overline{b} \in {}^{\len(\overline{x})}{M}$ and $\overline{a} \in {}^{\len(\overline{y})}{M}$.
	Then we set
	\begin{enumerate}
		\item  $\set_{\overline{\psi}}(\overb, \overa)$ stands for the following set $$\set_{\overline{\psi}}(\overb, \overa):=\big\{ \alpha \in  \alpha_{*}:   (\overb^{\frown}\overa\in\psi_{\alpha}(M)) \big\}.$$
		
		\item   By  $\Set_{\varphi, \overline{\psi}}(\overa)$ we mean
		$$\Set_{\varphi, \overline{\psi}}(\overa):= \big\{ u \subseteq \alpha_{*}: \text{for some} \ \overc \in \varphi(M, \overa) \ \text{we have} \ u = \set_{\overline{\psi}}(\overc, \overa) \big\}.$$

		\item By $\inter_{\varphi, \overline{\psi}}(\overline{a})$ we mean
		$$\bigg\{ (w_{0}, w_{1}): w_{0} \subseteq w_{1} \subseteq \alpha_{*} \ \text{and } \exists\ u_{0}, u_{1} \in \Set_{\varphi, \overline{\psi}}(\overa) \ \text{s.t.} \ w_{1} \subseteq u_{1} \ \text{and} \ u_{0} \cap w_{1} = w_{0}  \bigg\}.$$
		In particular, 	we have the following flowchart: $$\xymatrix{
			&&\alpha_{*}\\	&u_{0}\ar[ur]^{\subseteq}&&u_{1}\ar[ul]_{\subseteq}\\&  w_{0}\ar[rr]^{ \subseteq}\ar[u]^{\subseteq}
			&&w_{1}\ar[u]_{\subseteq}
			&&&}$$
	\end{enumerate}
\end{definition}
We are now ready to define the class $\affine_{2}$ of formulas:

\begin{definition}\label{m5}
	Let $\affine_{2}$ be the closure of the set of atomic formulas by:
	
	\begin{enumerate}[(a)]
		\item arbitrary conjunctions,
		
		\item existential quantifier $\exists \overline{x},$ and
		
		\item suppose for a given ordinal $\alpha_*$, the formulas $\varphi(\overline{x}, \overline{y}), \langle \psi_{\alpha}(\overline{x}, \overline{y}): \alpha < \alpha_* \rangle$ are  taken from $\affine_{2}$  such that  $\varphi(\overline{x}, \overline{y}) \geq \psi_{\alpha}(\overline{x}, \overline{y})$  for all $\alpha < \alpha_*$. Also suppose that
		$$\Upsilon \subseteq \{ (w_{0}, w_{1}): w_{0} \subseteq w_{1} \subseteq \alpha_{*} \}.$$
		Then $\vartheta(\overline{y}) = \Theta_{\varphi, \overline{\psi}, \Upsilon}(\overline{y}) \in \affine_{2},$ where $\vartheta(\overline{y})$ is defined  such that
		$$M \models \vartheta[\overline{a}]\iff  \Upsilon \subseteq \inter_{\varphi, \overline{\psi}}(\overline{a}).$$
	\end{enumerate}
\end{definition}
The main result of this section is the following.
\begin{proposition}
	\label{m8}
Adopt the previous notation. Then	$\affine_{2} \subseteq \affine_{1}.$
\end{proposition}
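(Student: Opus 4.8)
The plan is to prove, by induction on the construction of $\affine_2$ (Definition \ref{m5}), that every $\vartheta(\overline{x}) \in \affine_2$ lies in $\affine_1$. Each such $\vartheta$ is by construction a formula of $\cL_{\infty,\theta}(\tau_M)$ with $<\theta$ free variables, so the only thing to verify at each stage is that $\vartheta(M)$ is closed under the coordinatewise affine operation $\overline{x}-\overline{y}+\overline{z}$. Throughout I will use the elementary fact that, in the $R$-module $M$, a nonempty subset of ${}^{u}M$ closed under $\overline{x}-\overline{y}+\overline{z}$ is precisely a coset $p+G$ of a subgroup $G$ of $({}^{u}M,+)$ (take $p\in S$ and $G:=S-p$); this coset description is what will let me manipulate witnesses.

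For the base case, an atomic formula defines the solution set of a linear condition, since the interpretations of the predicate and function symbols are subgroups and homomorphisms by Definition \ref{a2}; hence its realization is a coset of a subgroup, so affine. The conjunction clause is immediate, as an arbitrary intersection of affine subsets of a fixed ${}^{u}M$ is again affine. The existential clause is the projection argument: if $\psi(\overline{x},\overline{y})$ defines an affine set $A\subseteq {}^{u}M\times {}^{v}M$ and $\overline{a}_0,\overline{a}_1,\overline{a}_2\in(\exists\overline{y}\,\psi)(M)$ with witnesses $\overline{b}_0,\overline{b}_1,\overline{b}_2$, then $(\overline{a}_0-\overline{a}_1+\overline{a}_2,\ \overline{b}_0-\overline{b}_1+\overline{b}_2)\in A$, so $\overline{a}_0-\overline{a}_1+\overline{a}_2$ again has a witness. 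These steps use only that affine sets are closed under intersection and coordinate projection.

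The crux is clause (c), the operator $\vartheta(\overline{y})=\Theta_{\varphi,\overline{\psi},\Upsilon}(\overline{y})$, where I may assume inductively that $\varphi$ and all $\psi_\alpha$ are in $\affine_1$. Take $\overline{a}_0,\overline{a}_1,\overline{a}_2\in\vartheta(M)$, set $\overline{a}:=\overline{a}_0-\overline{a}_1+\overline{a}_2$, and fix $(w_0,w_1)\in\Upsilon$; I must show $(w_0,w_1)\in\inter_{\varphi,\overline{\psi}}(\overline{a})$. For each $i$, since $(w_0,w_1)\in\inter_{\varphi,\overline{\psi}}(\overline{a}_i)$, choose $u_0^i,u_1^i\in\Set_{\varphi,\overline{\psi}}(\overline{a}_i)$ with $w_1\subseteq u_1^i$ and $u_0^i\cap w_1=w_0$, realized by witnesses $\overline{c}_j^i\in\varphi(M,\overline{a}_i)$ with $\set_{\overline{\psi}}(\overline{c}_j^i,\overline{a}_i)=u_j^i$. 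Put $\overline{c}_j:=\overline{c}_j^0-\overline{c}_j^1+\overline{c}_j^2$; affineness of $\varphi(M)$ gives $(\overline{c}_j,\overline{a})\in\varphi(M)$, so $u_j:=\set_{\overline{\psi}}(\overline{c}_j,\overline{a})\in\Set_{\varphi,\overline{\psi}}(\overline{a})$. The forward direction of affineness of each $\psi_\alpha(M)$ — membership of the three points $\overline{c}_j^i{}^{\frown}\overline{a}_i$ forces membership of their combination $\overline{c}_j{}^{\frown}\overline{a}$ — yields $u_j^0\cap u_j^1\cap u_j^2\subseteq u_j$. From this I get at once $w_1\subseteq u_1$ (since $w_1\subseteq u_1^i$ for all $i$) and $w_0\subseteq u_0\cap w_1$ (since $w_0\subseteq u_0^i$ for all $i$), which settles the $u_1$-requirement and one half of the $u_0$-requirement.

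The remaining inclusion $u_0\cap w_1\subseteq w_0$ is the main obstacle, and it is exactly where the one-sided nature of affineness bites: for $\alpha\in w_1\setminus w_0$ I know only that $\overline{c}_0^i{}^{\frown}\overline{a}_i\notin\psi_\alpha(M)$ for each $i$, and closure of the coset $\psi_\alpha(M)$ under $\overline{x}-\overline{y}+\overline{z}$ says nothing about the combination $\overline{c}_0{}^{\frown}\overline{a}$, which can land back inside $\psi_\alpha(M)$. To close this I would avoid the naive combination and exploit two features of the set-up: first, that $\psi_\alpha(M)$ is a coset $p_\alpha+W_\alpha$, so ``$\alpha\in u_0$'' is the single linear condition $(\overline{c}_0^0{}^{\frown}\overline{a}_0)-(\overline{c}_0^1{}^{\frown}\overline{a}_1)+(\overline{c}_0^2{}^{\frown}\overline{a}_2)\in p_\alpha+W_\alpha$; and second, that each $\varphi(M,\overline{a}_i)$ is a coset of a subgroup and hence closed under all integer affine combinations, so at level $i$ I may replace $\overline{c}_0^i$ by an integer affine combination of the available witnesses $\overline{c}_0^i,\overline{c}_1^i\in\varphi(M,\overline{a}_i)$ (keeping the level-sums $1,-1,1$, so the $\overline{a}$-component is unchanged). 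The plan is then to choose these coefficients so that for every $\alpha\in w_1\setminus w_0$ the combination avoids $W_\alpha$ — using that $\overline{c}_0^i{}^{\frown}\overline{a}_i-\overline{c}_1^i{}^{\frown}\overline{a}_i\notin W_\alpha$ there — while for $\alpha\in w_0$ it remains inside. Verifying that a single such choice of witness works for all $\alpha\in w_1$ simultaneously is the technical heart of the argument, and I expect it to be the hardest point.
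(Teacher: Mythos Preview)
Your induction set-up and the handling of atomic formulas, conjunctions, and existential quantification are fine and match the paper. You also correctly isolate the only nontrivial step, clause (c), and you correctly pinpoint the obstacle: the inclusion $u_0\cap w_1\subseteq w_0$.

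The gap is in how you propose to close it. Your plan is to stay with the symmetric combination $\overline c_0=\overline c_0^{\,0}-\overline c_0^{\,1}+\overline c_0^{\,2}$ and then search for integer affine combinations of the witnesses $\overline c_0^{\,i},\overline c_1^{\,i}$ (with level-sums $1,-1,1$) that simultaneously avoid all the cosets $\psi_\alpha(M)$ for $\alpha\in w_1\setminus w_0$ while staying inside for $\alpha\in w_0$. You have not carried this out, and in fact there is no reason such a uniform choice of coefficients should exist: the conditions for different $\alpha$ are independent linear constraints over possibly unrelated subgroups, and nothing in the hypotheses gives you control over all of them at once.

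The key observation you are missing is that affineness gives a \emph{two-sided} equivalence once two of the three inputs are known to lie in the set: if $S$ is affine and $A,B\in S$, then $A-B+C\in S$ if and only if $C\in S$ (apply affineness to $B,A,A-B+C$ for the converse). This means that if at positions $0$ and $1$ you use witnesses whose $\set$-values \emph{contain} $w_1$, then on $w_1$ the $\set$-value of the combined witness agrees \emph{exactly} with that of the position-$2$ witness. Concretely, take the asymmetric combination
\[
\overline c_0':=\overline c_1^{\,0}-\overline c_1^{\,1}+\overline c_0^{\,2}\in\varphi(M,\overline a),
\]
and let $u_0':=\set_{\overline\psi}(\overline c_0',\overline a)$. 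For each $\alpha\in w_1$ we have $\alpha\in u_1^{\,0}\cap u_1^{\,1}$, so $\overline c_1^{\,0}{}^{\frown}\overline a_0$ and $\overline c_1^{\,1}{}^{\frown}\overline a_1$ both lie in $\psi_\alpha(M)$; the equivalence above then gives $\alpha\in u_0'\iff \alpha\in u_0^{\,2}$. Hence $u_0'\cap w_1=u_0^{\,2}\cap w_1=w_0$, and together with your $u_1$ this witnesses $(w_0,w_1)\in\inter_{\varphi,\overline\psi}(\overline a)$. This is precisely the content of the paper's Claim~\ref{m11}, applied with $u_{0,1},u_{1,1}$ at the first two coordinates; your symmetric choice of witnesses and the search for clever coefficients are unnecessary.
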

\begin{proof}
	We prove the theorem in a sequence of claims. We proceed by induction on the complexity of the formula $\vartheta$ that if $\vartheta \in \affine_{2},$
	then $\vartheta \in \affine_{1}$. This is clear if $\vartheta$ is an atomic formula. Suppose $\vartheta=\bigwedge_{i \in I}\vartheta_i$, and the claim holds for all $\vartheta_i , i \in I$. It is then clear from inductive step  that
	$\vartheta \in \affine_{1}$ as well. Similarly, if $\vartheta=\exists \overline{x} \varphi(\overline{x}),$ and if the claim holds for $\varphi(\overline{x})$,
	then clearly it holds for $\vartheta$.

	Now suppose that  $\alpha_*$ is an ordinal,  $\varphi(\overline{x}, \overline{y}), \langle \psi_{\alpha}(\overline{x}, \overline{y}): \alpha < \alpha_* \rangle$ are in $\affine_{2}$, such that  $\varphi(\overline{x}, \overline{y}) \geq \psi_{\alpha}(\overline{x}, \overline{y})$  for all $\alpha < \alpha_*.$  Also, suppose that
	$$\Upsilon \subseteq \{ (w_{0}, w_{1}): w_{0} \subseteq w_{1} \subseteq \alpha_{*} \}.$$
	Assume by the induction hypothesis that the formulas  $\varphi(\overline{x}, \overline{y})$ and $\psi_{\alpha}(\overline{x}, \overline{y})$, for $\alpha < \alpha_*$,
	are in $\affine_1.$ We have to show that
	$\vartheta(\overline{y}) = \Theta_{\varphi, \overline{\psi}, \Upsilon}(\overline{y}) \in \affine_{1}$ as well.

	Now, we bring the following claim:
	\begin{claim}\label{m11}
		Adopt  the above notation.
		Assume $\overline{a}_{l} \in {}^{\rm{lg(\overline{y})}}M$ for $l = 0, 1, 2 ,3$ and $\overline{a}_{3} = \overline{a}_{0} - \overline{a}_{1} + \overline{a}_{2}.$
		If $u_{j} \in \rm{Set}_{\varphi, \overline{\psi}}(\overline{a}_{j})$ for $j =0, 1$ and $u = u_{0} \cap u_{1}$,  then
		$$ \big\{  w \cap u: w \in \Set_{\varphi, \overline{\psi}}(\overline{a}_{2}) \big\} =\big \{ w \cap u: w \in \Set_{\varphi, \overline{\psi}}(\overline{a}_{3}) \big\}.$$
	\end{claim}
	\begin{proof}
		Let $\overline{b}_{0} \in \varphi(M, \overline{a}_{0})$ and $\overline{b}_{1} \in \varphi(M, \overa_{1})$ be such that
		$u_0= \set_{\overline{\psi}}(\overb_{0}, \overa_{0})$ and $u_1= \set_{\overline{\psi}}(\overb_{1}, \overa_{1})$.
		Suppose that $w \in \Set_{\varphi, \overline{\psi}}(\overline{a}_{2})$. Then for some $\overline{b}_{2} \in \varphi(M, \overline{a}_{2})$
		we have $w=\set_{\overline{\psi}}(\overb_{2}, \overa_{2})$. We have to find $w' \in \Set_{\varphi, \overline{\psi}}(\overline{a}_{3})$
		such that $w' \cap u=w \cap u.$
		
		Set $\overb_{3} := \overb_{0} - \overb_{1} + \overb_{2}$. Note that
		\[
		l=0, 1, 2 \implies ~ \overline{b}_{l} ^{\frown} \overline{a}_{l} \in \varphi(M),
		\]
		hence, as $\varphi \in \affine_1,$ we have
		\[
		\overline{b}_{0} ^{\frown} \overline{a}_{0} - \overline{b}_{1} ^{\frown} \overline{a}_{1} + \overline{b}_{2} ^{\frown} \overline{a}_{2} \in \varphi(M).
		\]
		Clearly, $$\overb_3 ^{\frown} \overa_3=\overline{b}_{0} ^{\frown} \overline{a}_{0} - \overline{b}_{1} ^{\frown} \overline{a}_{1} + \overline{b}_{2} ^{\frown} \overline{a}_{2}\in \varphi(M).$$
		According to its definition, $\overb_3 \in \varphi(M, \overa_3).$
		
		Let $w'=\set_{\overline{\psi}}(\overb_{3}, \overa_{3})$. We show that $w' \cap u=w \cap u.$ Suppose $\alpha \in u.$ Then,
		we have $\alpha \in u_0 \cap u_1$, and hence $\overb_j ^{\frown} \overa_j \in \psi_\alpha(M)$, for $j=0, 1$. Thus as $\psi_\alpha \in \affine_1$, we have

		$$\begin{array}{ll}
		\alpha \in w' & \iff    \overb_3 ^{\frown} \overa_3 \in \psi_\alpha(M)  \\
		& \iff \overb_2 ^{\frown} \overa_2 \in \psi_\alpha(M)\\
		& \iff\alpha \in w.
		\end{array}$$
		
		Suppose
		$w' \in \Set_{\varphi, \overline{\psi}}(\overline{a}_{3}).$
		By symmetry, $w\cap u=w' \cap u$ for some $w \in \Set_{\varphi, \overline{\psi}}(\overline{a}_{2}).$  The claim follows.
	\end{proof}
	Let us apply the previous claim and observe that:
	\begin{claim}\label{m17}
		Let $\varp=\len(\overline{y}) < \theta$ and  $\overa_{l} \in {}^{\varp}M$
		for
		$l = 0, 1, 2$,  and  set $\overa_{3} := \overa_{0} - \overa_{1} + \overa_{2}$.  If $\Upsilon\subseteq \bigcap_{l \leq 2} \inter_{\varphi, \overline{\psi}}(\overa_{l}) $, then  $\Upsilon \subseteq{\inter_{\varphi, \overline{\psi}}(\overa_{3})}.$
	\end{claim}
	
	\begin{proof}
		Let $(w_{0}, w_{1}) \in \Upsilon$. We shall prove that $(w_{0}, w_{1}) \in \inter(\overa_{3}).$ For $j \leq 2,$ as $(w_{0}, w_{1}) \in \Upsilon \subseteq \inter_{\varphi, \overline{\psi}}(\overa_{j})$,  there is a pair $u_{j, 0}, u_{j, 1} \in \set_{\varphi, \overline{\psi}}(\overa_{j})$ witnessing it. Namely, we have
		\[
		w_1 \subseteq u_{j, 1} \text{~and~} u_{j, 0} \cap w_1=w_0.
		\]
		Now, we can find $\overb_{j, 0}, \overb_{j, 1}$  such that $\rm{set}(\overb_{j, 0}, \overa_{j}) = u_{j, 0}$ and $ \rm{set}(\overb_{j, 1}, \overa_{j}) = u_{j, 1}.$
		Set
		\begin{enumerate}     \item[$\bullet$] $\overb_{3, 0} := \overb_{0, 0} - \overb_{1, 0} + \overb_{2, 0}$,
			\item[$\bullet$] $\overb_{3, 1} := \overb_{0, 1} - \overb_{1, 1} + \overb_{2, 1}.$
		\end{enumerate}
		
		By the argument of   Claim \ref{m11}, one may find some $u_{3, 1} \in \set_{\varphi, \overline{\psi}}(\overb_{3})$  and $u_{3, 0} \in \set_{\varphi, \overline{\psi}}(\overb_{3})$   such that
		the following two equalities are valid:
		\begin{enumerate}
			\item   $u_{3, 1} \cap (u_{0, 1} \cap u_{1, 1}) = u_{2, 1} \cap (u_{0, 1} \cap u_{1, 1})$, and
			\item   $u_{3, 0} \cap (u_{0, 0} \cap u_{1, 0}) = u_{2, 0} \cap (u_{0, 0} \cap u_{1, 0})$.
		\end{enumerate}
		By clause (1), we have $u_{3, 1} \supseteq w_1$.
		Hence
		$$\begin{array}{ll}
		w_0   &\subseteq u_{3, 1} \cap w_1\\&=  u_{3, 1} \cap (u_{0, 1} \cap u_{1, 1})\cap w_1\\
		&= \big(u_{3, 1} \cap (u_{0, 1} \cap u_{1, 1}) \big) \cap (u_{0, 1} \cap u_{1, 1})\cap w_1 \\
		&\stackrel{(2)}=\big( u_{2, 1} \cap (u_{0, 1} \cap u_{1, 1})\big) \cap (u_{0, 1} \cap u_{1, 1})\cap w_1\\
		&\subseteq w_0,
		\end{array}$$
		and so
		\[
		u_{3, 1} \cap w_1 = w_0.
		\]
		The claim follows.
	\end{proof}
	Now, we are ready to complete the proof of
	Proposition \ref{m8}.
	To this end, we fix the following data:
	
	\begin{enumerate}     \item[$\bullet_{1}$] $\vartheta(\overline{y}) = \theta_{\varphi, \overline{\psi}, \Upsilon}(\overline{y}),$
		\item[$\bullet_{2}$] $\overa_0, \overa_1, \overa_2 \in \vartheta(M)$,
		
		\item[$\bullet_{3}$] $\overa_3:=\overa_0 - \overa_1+\overa_2$.
		
	\end{enumerate}
	This gives us $\Upsilon \subseteq \inter_{\varphi, \overline{\psi}}(\overa_{l})$  for $l \leq 2$. Thanks to Claim \ref{m17},
	we know  $\Upsilon \subseteq{\inter_{\varphi, \overline{\psi}}(\overa_{3})}.$ According to Definition \ref{m5}(c) one has $\overa_3 \in \vartheta(M)$. Consequently,
	$\vartheta(\overline{y}) \in \affine_1$, and the proposition follows.
\end{proof}

\section{Additive frames}

In this section we  introduce the concept of an additive frame. Each additive frame contains, among other things, an abelain group. We will show that each abelian group can be realized in this way. In particular, the main result of this section is Theorem \ref{L7}.

The following is one of  our main and new frameworks:

\begin{definition}\label{k2}
\begin{enumerate}
\item[(A)]   We say
    $$\bff:=(M_\bff, \cL_\bff, \lambda_\bff, \kappa_\bff, \theta_\bff,\Omega_\bff)=(M, \cL, \lambda, \kappa, \theta,\Omega)$$
    is a \emph{general frame} if:
      \begin{enumerate}[(1)]
    	\item $M$ is a  $\tau_{M}$-model.

	\item $\cL$ is a class or set of formulas in the vocabulary $\tau_{M},$ such that each $\varphi \in \cL$ has the form $\varphi(\overline{x}),  \ \overline{x}$ of length $< \theta$. 

	\item For every $\overline{a} \in {}^{\varp}M, ~ \varp < \theta$, there is a formula
$ \varphi_{\overline{a}}(\overline{x})  \in \cL$
 such that:
        \begin{enumerate}[(a)]
            \item $\overline{a} \in  \varphi_{\overline{a}}(M),$

            \item (the minimality condition) if  $\psi(\overline{x}) \in \cL$ and  $\overline{a} \in \psi(M),$ then  $ \varphi_{\overline{a}}(M) \subseteq \psi(M).$
        \end{enumerate}

	\item

      \begin{enumerate}[(a)]
            \item If $\varphi_{\alpha}(\overline{x}) \in \cL$ for $\alpha < \kappa$, then for some $\alpha < \beta < \kappa$, we have $\varphi_{\alpha}(M) \supseteq \varphi_{\beta}(M),$

            \item if $\varphi_{\alpha, \beta}(\overline{x}, \overline{y}) \in \cL$ for $\alpha < \beta < \lambda$, then for some $\alpha_{1} < \alpha_{2} < \alpha_{3} < \lambda$ we have $\varphi_{\alpha_{1}, \alpha_{2}}(M) \supseteq \varphi_{\alpha_{1}, \alpha_{3}}(M), \varphi_{\alpha_{2}, \alpha_{3}}(M)$.
      \end{enumerate}
       \item $\lambda, \kappa$ are regular and $\lambda \geq \kappa \geq \theta\geq |\Omega|+|\tau_M|,$  where $\Omega$ is a set of cardinals such that $1 \in \Omega$
       and all other cardinals in it are infinite.
 \end{enumerate}
\item[(B)] We say a general frame
$\bff$
is an \emph{additive frame} if in addition, it
satisfies:
\begin{enumerate}
\item[(6)]  $(\vert M \vert, +^{M}, -^{M}, 0^{M})$ is an abelian group. Moreover, $M$ is an additive $\theta$-model.

\item[(7)] If $\varphi(\overline{x}_u) \in \cL$, then  $\varphi(M) \subseteq {}^{u} M$ is a subgroup.
 \end{enumerate}

 \item[(C)] An additive frame $\bff$ is an  \emph{additive$^+$ frame} if $M_{\bff}$ has cardinality
 greater or equal to $\lambda$.
\end{enumerate}
\end{definition}
\begin{remark}
Given a general frame $\bff$ as above, we always assume that the language $\cL$ is closed under permutation of variables, adding dummy variables and finite conjunction.
\end{remark}
The next lemma is a criterion for an additive frame to be additive$^+$.
\begin{lemma}
\label{lem3} Suppose $\bff=(M, \cL, \lambda, \kappa, \theta,\Omega)$ is an additive frame. Then it is an additive$^+$ frame if and only if for each $\varepsilon \in (0, \theta)$,
there exists some $\bar a \in$$^{\varepsilon}M$ such that $\varphi_{\bar a}(M)$ has cardinality $\geq \lambda$.
\end{lemma}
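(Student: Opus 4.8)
The plan is to prove the two implications of the biconditional separately, reading \lqq $\varphi_{\overline{a}}$ has cardinality $\geq\lambda$'' as $|\varphi_{\overline{a}}(M)|\geq\lambda$. Almost all the work is in the forward direction; the converse is a one-line specialization. For the converse, suppose the cardinality condition holds and apply it with $\varepsilon=1$ (legitimate since $\theta$ is infinite, so $1\in(0,\theta)$). This yields some $\overline{a}\in{}^{1}M$ with $|\varphi_{\overline{a}}(M)|\geq\lambda$. Since $\varphi_{\overline{a}}(M)\subseteq{}^{1}M$ and $|{}^{1}M|=|M|$, we conclude $|M|\geq\lambda$, i.e.\ $\bff$ is additive$^+$.

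For the forward direction I would fix $\varepsilon\in(0,\theta)$, assume $|M|\geq\lambda$, and produce $\overline{a}\in{}^{\varepsilon}M$ with $|\varphi_{\overline{a}}(M)|\geq\lambda$. The crux, and the only place where clause (4)(a) of Definition \ref{k2} is used, is the following covering statement: there is $S\subseteq{}^{\varepsilon}M$ with $|S|<\kappa$ such that ${}^{\varepsilon}M=\bigcup_{\overline{a}\in S}\varphi_{\overline{a}}(M)$. I would establish this by a diagonal construction. If no such $S$ exists, then recursively choose $\overline{a}_{\alpha}\in{}^{\varepsilon}M\setminus\bigcup_{\gamma<\alpha}\varphi_{\overline{a}_{\gamma}}(M)$ for $\alpha<\kappa$; this is possible at each stage because $\{\overline{a}_{\gamma}:\gamma<\alpha\}$ has size $<\kappa$, so by the failure of covering its associated minimal sets do not exhaust ${}^{\varepsilon}M$. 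Each $\varphi_{\overline{a}_{\alpha}}$ lies in $\cL$ by clause (3), so applying clause (4)(a) to $\langle\varphi_{\overline{a}_{\alpha}}:\alpha<\kappa\rangle$ gives $\alpha<\beta$ with $\varphi_{\overline{a}_{\alpha}}(M)\supseteq\varphi_{\overline{a}_{\beta}}(M)$. Then $\overline{a}_{\beta}\in\varphi_{\overline{a}_{\beta}}(M)\subseteq\varphi_{\overline{a}_{\alpha}}(M)\subseteq\bigcup_{\gamma<\beta}\varphi_{\overline{a}_{\gamma}}(M)$, contradicting the choice of $\overline{a}_{\beta}$.

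Granting the covering statement, I finish by a cardinality count. On the one hand, projection to the first coordinate maps ${}^{\varepsilon}M$ onto $M$ (using $\varepsilon\geq1$), so $|{}^{\varepsilon}M|\geq|M|\geq\lambda$. On the other hand ${}^{\varepsilon}M=\bigcup_{\overline{a}\in S}\varphi_{\overline{a}}(M)$ with $|S|<\kappa\leq\lambda$. If every $\varphi_{\overline{a}}(M)$ for $\overline{a}\in S$ had cardinality $<\lambda$, then, since $\lambda$ is regular and $|S|<\lambda$, the union would have cardinality $<\lambda$, contradicting $|{}^{\varepsilon}M|\geq\lambda$. Hence some $\overline{a}\in S$ satisfies $|\varphi_{\overline{a}}(M)|\geq\lambda$, which is exactly what is required.

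The main obstacle is the covering statement: it is the single point where the combinatorial content of the frame (clause (4)(a), which rules out long \lqq bad'' sequences of minimal sets) is converted into a cover of ${}^{\varepsilon}M$ by fewer than $\kappa$ minimal sets. Everything downstream is routine cardinal arithmetic resting on the regularity of $\lambda$ together with $\kappa\leq\lambda$ from clause (5). I would also remark that additivity (clauses (6) and (7)) plays no essential role here beyond fixing the ambient setting, since the argument uses only that each $\varphi_{\overline{a}}(M)$ is a subset of ${}^{\varepsilon}M$ containing $\overline{a}$, guaranteed by clause (3).
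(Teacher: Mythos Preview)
Your proof is correct and follows essentially the same approach as the paper's. The only organizational difference is that you factor out the covering statement (that ${}^{\varepsilon}M$ is covered by fewer than $\kappa$ of the sets $\varphi_{\overline{a}}(M)$) as a separate step valid without any cardinality hypothesis, whereas the paper folds the cardinality assumption directly into the inductive construction of the bad sequence; both arguments hinge on exactly the same use of clause~(4)(a) and the regularity of $\lambda$.
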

\begin{proof}
The assumption clearly implies $\bff$ is additive$^+$. To see the other direction, suppose $\bff$ is an additive$^+$ frame
and $\varepsilon \in (0, \theta).$ Suppose by the way of contradiction,  $|\varphi_{\bar a}(M)| < \lambda$ for all $\bar a \in$$^{\varepsilon}M$. By induction on $\alpha < \kappa$
we can find a sequence $\langle  \bar a_\beta: \beta < \kappa    \rangle$ such that for each $\beta <\kappa,$
$\bar a_\beta \notin \bigcup_{\alpha < \beta }\varphi_{\bar a_\alpha}(M)$. This contradicts Definition \ref{k2}(4)(a).
\end{proof}
The following defines a partial order relation on formulas of a frame.
\begin{definition}
\label{k86}
Assume $\bff$ is a general frame, and let  $\psi(\overline{x}), \varphi(\overline{x})$ be in $\cL_{\bff}$.
 \begin{enumerate}
\item  We say $\psi(\overline{x}) \leq \varphi(\overline{x})$  if $\psi(M) \subseteq \varphi(M).$\item We   say
    $\psi(\overline{x})$ and  $\varphi(\overline{x})$ are equivalent, denoted by $\psi(\overline{x}) \equiv \varphi(\overline{x})$, if $\varphi(M)= \psi(M).$

\item Suppose $\overline{a}, \overline{b} \in {}^{\varp}M$. We let $\overline{a} \leq \overline{b}$ (resp. $\overline{a} \equiv \overline{b}$)
 if $\varphi_{\overline{a}} \leq \varphi_{\overline{b}}$ (resp. $\varphi_{\overline{a}} \equiv \varphi_{\overline{b}}$).   We say $\overline{a}$ is equivalent with $\overline{b}$ if  $\overline{a} \equiv \overline{b}$.
\end{enumerate}
\end{definition}

\begin{notation}\label{k8}
Assume $\bff=(M,  \lambda, \kappa, \theta,\Omega)$ is an additive frame.
Let $\overline{a}_{l}=\langle a_{l, \zeta}: \zeta < \varp   \rangle \in {}^{\varp}M$ for $l < n$. We set:

        \begin{itemize}

            \item  $-\overline{a}_{l} := \langle -a_{l, \zeta}: \zeta < \varp \rangle,$

            \item $\overline{a}_{1} + \overline{a}_{2} := \langle a_{1, \zeta} + a_{2, \zeta}: \zeta < \varp \rangle,$

            \item $\sum_{l < n} \overline{a}_{l} := \langle \sum_{l<n }a_{l, \zeta}: \zeta < \varp \rangle,$

            \item $\overline{a}-\overline{b}:=\overline{a}+(-\overline{b}).$
        \end{itemize}
\end{notation}


\begin{lemma}\label{k11}
    Suppose $\bff=(M,  \lambda, \kappa, \theta,\Omega)$ is an additive frame, $\overline{a} \in {}^{\varp}M, \varphi = \varphi_{\overline{a}}$ and $\overline{a}_{\alpha} \in \varphi(M)$ for $\alpha < \lambda$.
     Then for some $\overline{\beta}, \overline{\gamma}$ we have:

    \begin{enumerate}[(a)]
        \item $\overline{\beta} = \langle \beta_{i}: i < \lambda \rangle \in {}^{\lambda} \lambda$ is increasing,

        \item $\overline{\gamma} = \langle \gamma_{i}: i < \lambda \rangle \in {}^{\lambda}\lambda $ is increasing,

        \item $\beta_{i} < \gamma_{i} < \beta_{i+1},$ for all $i<\lambda$,

        \item $\overline{a} - \overline{a}_{\beta_{i}} + \overline{a}_{\gamma_{i}}$ is equivalent to $\overline{a},$ for all $i<\lambda.$
    \end{enumerate}
\end{lemma}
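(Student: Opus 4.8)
The plan is to isolate a single \emph{one--pair} statement and then manufacture the two interleaved sequences by a routine transfinite recursion. The statement I would aim for first is:

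\smallskip
\noindent\textbf{Key Claim.} \emph{Whenever $S\subseteq\lambda$ has $|S|=\lambda$ and $\overline{a}_\alpha\in\varphi(M)$ for all $\alpha\in S$, there are $\beta<\gamma$ in $S$ with $\overline{a}-\overline{a}_\beta+\overline{a}_\gamma\equiv\overline{a}$.}

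\smallskip
Granting the Key Claim, I would define $\overline{\beta},\overline{\gamma}$ by recursion on $i<\lambda$: having chosen $\beta_j,\gamma_j$ for $j<i$, set $\delta_i:=\sup\{\gamma_j+1:j<i\}$, which is $<\lambda$ since $\lambda$ is regular and $i<\lambda$, and apply the Key Claim to the tail $S_i:=[\delta_i,\lambda)$ (of size $\lambda$) to get $\beta_i<\gamma_i$ in $S_i$ with $\overline{a}-\overline{a}_{\beta_i}+\overline{a}_{\gamma_i}\equiv\overline{a}$. Because $\beta_i\ge\delta_i>\gamma_j$ for every $j<i$, the sequences satisfy $\beta_0<\gamma_0<\beta_1<\gamma_1<\cdots$, so clauses (a)--(d) hold immediately (in particular $\gamma_i<\beta_{i+1}$ since $\beta_{i+1}\ge\delta_{i+1}>\gamma_i$).

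To prove the Key Claim I may assume $S=\lambda$ after re-enumerating. For $\beta<\gamma<\lambda$ put $\overline{c}_{\beta,\gamma}:=\overline{a}-\overline{a}_\beta+\overline{a}_\gamma$; since $\varphi(M)$ is a subgroup by Definition~\ref{k2}(7) and $\overline{a},\overline{a}_\beta,\overline{a}_\gamma\in\varphi(M)$, we have $\overline{c}_{\beta,\gamma}\in\varphi(M)$. Consider the two--block minimal formulas $\chi_{\beta,\gamma}:=\varphi_{\overline{a}\,^{\frown}\overline{c}_{\beta,\gamma}}(\overline{x},\overline{y})\in\cL$, where $\overline{x}$ is the $\overline{a}$--block and $\overline{y}$ the $\overline{c}$--block (these exist by Definition~\ref{k2}(3), as $\len(\overline{x})+\len(\overline{y})=2\varp<\theta$ for $\theta$ infinite). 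Applying the Ramsey--type condition Definition~\ref{k2}(4)(b) to $\langle\chi_{\beta,\gamma}:\beta<\gamma<\lambda\rangle$ yields $\alpha_1<\alpha_2<\alpha_3$ with $\chi_{\alpha_1,\alpha_2}(M)\supseteq\chi_{\alpha_1,\alpha_3}(M),\chi_{\alpha_2,\alpha_3}(M)$. Writing $V:=\chi_{\alpha_1,\alpha_2}(M)$, the defining points of all three formulas lie in $V$, and since $V$ is a subgroup the combination
\[
(\overline{a}\,^{\frown}\overline{c}_{\alpha_1,\alpha_2})-(\overline{a}\,^{\frown}\overline{c}_{\alpha_1,\alpha_3})+(\overline{a}\,^{\frown}\overline{c}_{\alpha_2,\alpha_3})=\overline{a}\,^{\frown}\big(\overline{c}_{\alpha_1,\alpha_2}-\overline{c}_{\alpha_1,\alpha_3}+\overline{c}_{\alpha_2,\alpha_3}\big)
\]
belongs to $V$; a direct cancellation shows the $\overline{c}$--combination equals $\overline{a}$, so $\overline{a}\,^{\frown}\overline{a}\in\varphi_{\overline{a}\,^{\frown}d}(M)$ with $d:=\overline{c}_{\alpha_1,\alpha_2}$.

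Finally I would extract $d\equiv\overline{a}$ by a projection argument. Let $\hat\varphi_d(\overline{x},\overline{y}):=\varphi_d(\overline{y})$ be $\varphi_d$ with a dummy $\overline{x}$--block, so $\hat\varphi_d\in\cL$ and $\hat\varphi_d(M)={}^{\varp}M\times\varphi_d(M)$. Since $\overline{a}\,^{\frown}d\in\hat\varphi_d(M)$, minimality of $\varphi_{\overline{a}\,^{\frown}d}$ gives $\varphi_{\overline{a}\,^{\frown}d}(M)\subseteq\hat\varphi_d(M)$, hence $\overline{a}\,^{\frown}\overline{a}\in\hat\varphi_d(M)$ and so $\overline{a}\in\varphi_d(M)$. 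Minimality of $\varphi_{\overline{a}}$ then gives $\varphi_{\overline{a}}(M)\subseteq\varphi_d(M)$, while $d\in\varphi(M)=\varphi_{\overline{a}}(M)$ gives the reverse inclusion, so $\varphi_d(M)=\varphi_{\overline{a}}(M)$, i.e.\ $d\equiv\overline{a}$ in the sense of Definition~\ref{k86}. Taking $\beta=\alpha_1,\gamma=\alpha_2$ proves the Key Claim. I expect the main obstacle to be conceptual rather than computational: the whole proof hinges on choosing exactly the family $\varphi_{\overline{a}\,^{\frown}\overline{c}_{\beta,\gamma}}$ of two--block minimal formulas to feed into condition (4)(b), and on recognizing that the alternating sum of the three defining points collapses to $\overline{a}\,^{\frown}\overline{a}$; once this is seen, the subgroup arithmetic and the dummy--variable/minimality extraction are routine. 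It is worth noting that condition (4)(a) is not needed for this lemma.
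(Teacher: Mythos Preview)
Your proof is correct and follows essentially the same strategy as the paper: reduce to a one-pair statement via transfinite recursion on tails of $\lambda$, apply condition (4)(b) to the minimal formulas attached to the affine combinations $\overline{c}_{\beta,\gamma}=\overline{a}-\overline{a}_\beta+\overline{a}_\gamma$, and exploit the cancellation $\overline{c}_{\alpha_1,\alpha_2}-\overline{c}_{\alpha_1,\alpha_3}+\overline{c}_{\alpha_2,\alpha_3}=\overline{a}$ inside the subgroup $\varphi_{\alpha_1,\alpha_2}(M)$. The paper is slightly more direct in that it feeds the single-block formulas $\varphi_{\overline{c}_{\beta,\gamma}}$ into (4)(b) rather than your two-block $\varphi_{\overline{a}\,^{\frown}\overline{c}_{\beta,\gamma}}$, so the first coordinate is absent throughout and your final dummy-variable projection step becomes unnecessary.
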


\begin{proof}
    First, we reduce the lemma to the following claim:
    \begin{enumerate}
    \item[$(*)$] It is suffice to prove, for each sequences $\overline{a}, \langle \overline{a}_{\alpha}: \alpha < \lambda \rangle$ as above,  there are $\beta < \gamma < \lambda$ such that $\overline{a} - \overline{a}_{\beta} + \overline{a}_{\gamma}$ is equivalent to $\overline{a}.$
\end{enumerate}
To see this,   suppose  $(*)$ holds. By induction on $i<\lambda,$ we define the increasing sequences $\langle \beta_{i}: i < \lambda \rangle $  and $\langle \gamma_{i}: i < \lambda \rangle  $ as requested. Thus suppose that $i<\lambda$, and we have defined
$\langle \gamma_{j},\beta_{j}: j<i  \rangle  $. In order to define $(\beta_{i}, \gamma_{i})$,  we let $$\alpha_{*} := \sup \{ \gamma_{j} + \beta_{j} + 1: j < i  \}.$$
Since $\lambda$ is regular, $\alpha_{*}<\lambda$. Now, apply $(\ast)$ to $\overline{a}$ and $\langle \overline{a}_{\alpha_{*} + \alpha}:\alpha < \kappa\rangle$. This gives us
$\beta < \gamma < \kappa$ such that $$\overline{a} - \overline{a}_{\alpha_{*} +\beta} + \overline{a}_{\alpha_{*} +\gamma} \equiv \overline{a}.$$
Thus it suffices to set $\beta_i=\alpha_*+\beta$ and $\gamma_i=\alpha_*+\gamma.$

So, things are reduced in showing $(\ast)$ holds.
To see this, we define the formula $\varphi_{ \beta \gamma}$ as
$$(+)\quad\quad\quad \varphi_{ \beta \gamma}  := \varphi_{\overline{a} - \overline{a}_{\beta} + \overline{a}_{\gamma}}, $$
where $\beta < \gamma < \lambda$.
Note that $\overline{a},\overline{a}_{\beta} , \overline{a}_{\gamma}\in \varphi_{\overline{a}}(M),$
hence as $\varphi_{\overline{a}}(M)$ is a subgroup,
$\overline{a}-\overline{a}_{\beta}+ \overline{a}_{\gamma}\in \varphi_{\overline{a}}(M).$
Thanks to the minimality condition  from Definition \ref{k2}(3)(b), this implies that
 $\varphi_{\overline{a}} \geq \varphi_{\beta, \gamma}.$
Thus,
it is sufficient   to find $\beta < \gamma < \kappa $
 such that $ \varphi_{\overline{a}} \leq \varphi_{\beta, \gamma}$.
 By the property presented in Definition
 \ref{k2}(4)(b) there are  $\alpha_{1} < \alpha_{2} < \alpha_{3} < \kappa$
 such that  $\varphi_{\alpha_{1}, \alpha_{2}} \geq \varphi_{\alpha_{1}, \alpha_{3}}, \varphi_{\alpha_{2}, \alpha_{3}}.$
So,     \begin{enumerate}[(1)]
	\item $\overline{a} - \overline{a}_{\alpha_{1}} + \overline{a}_{\alpha_{2}} \in \varphi_{\alpha_{1}, \alpha_{2}}(M)$,

\item $\overline{a} - \overline{a}_{\alpha_{1}} + \overline{a}_{\alpha_{3}} \in
\varphi_{\alpha_{1}, \alpha_{3}}(M) \subseteq \varphi_{\alpha_{1}, \alpha_{2}}(M)$,

\item $\overline{a} - \overline{a}_{\alpha_2} + \overline{a}_{\alpha_3} \in \varphi_{\alpha_{2}, \alpha_{3}}(M) \subseteq \varphi_{\alpha_{1}, \alpha_{2}}(M).$

\end{enumerate}
Hence     \begin{equation*}
\begin{array}{clcr}
\overline{a}&=(1)-(2)+(3) \in\varphi_{\alpha_{1}, \alpha_{2}}(M).
\end{array}
\end{equation*}
Combining this with the minimality property from Definition \ref{k2}(3)(b) we observe that $\varphi_{\overline{a}} \leq \varphi_{\alpha_1, \alpha_2}$.
Thus it suffices to take $\beta=\alpha_1$
and $\gamma=\alpha_2$.
\end{proof}

\begin{corollary}\label{k14}
 Suppose $\bff=(M,  \lambda, \kappa, \theta,\Omega)$ is an additive frame. The following assertions are valid:
\begin{enumerate}
\item Suppose $\varphi_{\overline{a}}(M)$ has cardinality $\geq \lambda$. Then there is $\overline{b} \in \varphi_{\overline{a}}(M)$ such that $\overline{b} \neq \overline{a}$ and $ \overline{b}$ is equivalent to $\overline{a}.$

\item If for some $\overline{c}\in$$^{\varepsilon}M$,  $ \varphi_{\overline{c}}(M)$ has cardinality $\geq \lambda,$ then  the set
$
\{\overline{b} \in \varphi_{\overline{c}}(M): \overline{b} \text{~is equivalent to~} \overline{c}    \}
$
has cardinality $\geq \lambda.$

\item If $\bff$ is an additive$^+$ frame and $\varepsilon \in (0, \theta)$, then for some $\overline{a} \in$$^{\varepsilon}M$, the set
$\{\overline{b} \in \varphi_{\overline{a}}(M): \overline{b} \text{~is equivalent to~} \overline{a}    \}$ has cardinality $\geq \lambda.$
\end{enumerate}
\end{corollary}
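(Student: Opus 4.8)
The plan is to handle the three parts in order, using Lemma \ref{k11} as the engine and reducing (3) to (2) via Lemma \ref{lem3}. For part (1) I would argue straight from Lemma \ref{k11}. Since $\varphi_{\overline a}(M)$ has cardinality $\geq\lambda$, fix pairwise distinct $\overline a_\alpha\in\varphi_{\overline a}(M)$ for $\alpha<\lambda$ and apply Lemma \ref{k11} to $\overline a$ and $\langle\overline a_\alpha:\alpha<\lambda\rangle$, obtaining increasing $\overline\beta,\overline\gamma$ with $\beta_i<\gamma_i$ and $\overline a-\overline a_{\beta_i}+\overline a_{\gamma_i}\equiv\overline a$ for all $i<\lambda$. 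Setting $\overline b:=\overline a-\overline a_{\beta_0}+\overline a_{\gamma_0}$, we have $\overline b\in\varphi_{\overline a}(M)$ because $\varphi_{\overline a}(M)$ is a subgroup (Definition \ref{k2}(7)) containing $\overline a,\overline a_{\beta_0},\overline a_{\gamma_0}$; clause (d) gives $\overline b\equiv\overline a$; and $\overline b\neq\overline a$ since $\beta_0<\gamma_0$ forces $\overline a_{\beta_0}\neq\overline a_{\gamma_0}$. This already settles (1).

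For part (2), write $E_{\overline a}:=\{\overline b\in\varphi_{\overline a}(M):\overline b\equiv\overline a\}$, and note $E_{\overline a}\subseteq\varphi_{\overline a}(M)$, so the desired conclusion forces, and hence splits into, two tasks: (A) every $\overline a\in{}^{\varepsilon}M$ satisfies $|\varphi_{\overline a}(M)|\geq\lambda$, and (B) whenever $|\varphi_{\overline a}(M)|\geq\lambda$ one has $|E_{\overline a}|\geq\lambda$. For (B) I would sharpen the construction of (1): rather than an arbitrary enumeration, I would choose the $\overline a_\alpha$ to form a \emph{Sidon} subset of the group $\varphi_{\overline a}(M)$, i.e.\ one whose pairwise differences are all distinct; such a set of size $\lambda$ exists inside any abelian group of cardinality $\geq\lambda$ by a transfinite construction in which only $<\lambda$ values are forbidden at each stage (the one delicate point being equations $2x=w$, which is dealt with separately, via a basis of the $2$-torsion, when that torsion is large). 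Feeding this Sidon set into Lemma \ref{k11}, the witnesses $\overline b_i:=\overline a-\overline a_{\beta_i}+\overline a_{\gamma_i}$ become pairwise distinct, because the ordered pairs $(\beta_i,\gamma_i)$ are distinct and differences in a Sidon set are distinct; as all $\overline b_i\in E_{\overline a}$ by clause (d), this yields $|E_{\overline a}|\geq\lambda$.

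The heart of the matter is (A), uniform largeness, and I expect this to be the main obstacle. It is exactly here that the combinatorial axioms of a frame are indispensable: for an unrestricted $\cL$ a tuple such as $\overline 0$ could have $\varphi_{\overline 0}(M)=\{0\}$, so (A) cannot follow from the minimality condition Definition \ref{k2}(3) alone. I would attack (A) by the escaping-sequence method of Lemma \ref{lem3}: supposing some $\overline a_*\in{}^{\varepsilon}M$ had $|\varphi_{\overline a_*}(M)|<\lambda$, build recursively a sequence $\langle\overline d_\delta:\delta<\kappa\rangle$ with $\overline d_\delta\notin\bigcup_{\gamma<\delta}\varphi_{\overline d_\gamma}(M)$ (feasible since $|{}^{\varepsilon}M|\geq\lambda$, which is guaranteed by the hypothesis that some $\overline c$ is big), and then observe that the formulas $\varphi_{\overline d_\delta}$ violate Definition \ref{k2}(4)(a). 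The subtle step is to make the recursion pin down the \emph{given} $\overline a_*$ as big, rather than merely produce some big tuple, and I anticipate that closing this gap will require the two-dimensional partition property Definition \ref{k2}(4)(b) alongside (4)(a). Finally, part (3) is then immediate: if $\bff$ is additive$^{+}$, then by Lemma \ref{lem3} there is, for the prescribed $\varepsilon\in(0,\theta)$, some $\overline c\in{}^{\varepsilon}M$ with $|\varphi_{\overline c}(M)|\geq\lambda$, and part (2) delivers the conclusion for every $\overline a\in{}^{\varepsilon}M$.
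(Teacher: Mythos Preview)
Your parts (1) and (3) are fine and agree with the paper. The problem is your strategy for (2): the decomposition into (A) ``every $\overline a$ has $|\varphi_{\overline a}(M)|\ge\lambda$'' plus (B) is not merely hard to close but wrong in principle. In a general additive frame, $\cL$ may contain the formula $\bigwedge_i x_i=0$, so $\varphi_{\overline 0}(M)=\{\overline 0\}$ while some other $\overline c$ has $|\varphi_{\overline c}(M)|\ge\lambda$; thus (A) can fail outright, and no amount of (4)(a)--(4)(b) combinatorics will salvage it. Your Sidon-set argument for (B) is correct but then never gets to fire.

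The paper does not go through (A) at all. It argues by contradiction: assume $X=\{\overline b:\overline b\equiv\overline a\}$ has size $\mu<\lambda$, enumerate it as $\langle\overline b_i:i<\mu\rangle$, take distinct $\overline c_\alpha\in\varphi_{\overline c}(M)$, and thin to a subsequence $\langle\overline c_{\xi_\alpha}:\alpha<\lambda\rangle$ so that $\overline c_{\xi_\alpha}\ne\overline b_i-\overline a+\overline c_{\xi_\beta}$ for all $\beta<\alpha$ and $i<\mu$ (only $<\lambda$ values are forbidden at each step). One then re-runs the \emph{argument} of Lemma~\ref{k11} with $\overline a$ and the sequence $\langle\overline c_{\xi_\alpha}\rangle$, producing $\beta<\gamma$ with $\overline a-\overline c_{\xi_\beta}+\overline c_{\xi_\gamma}$ equivalent to $\overline a$; the avoidance condition guarantees this element is not one of the $\overline b_i$, contradicting $|X|=\mu$. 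The point you are missing is that the proof of $(\ast)$ in Lemma~\ref{k11}---the identity $(\overline a-\overline d_{\alpha_1}+\overline d_{\alpha_2})-(\overline a-\overline d_{\alpha_1}+\overline d_{\alpha_3})+(\overline a-\overline d_{\alpha_2}+\overline d_{\alpha_3})=\overline a$ combined with (4)(b)---does not use that the $\overline d_\alpha$ lie in $\varphi_{\overline a}(M)$; so one can feed in tuples coming from $\varphi_{\overline c}(M)$ and still relate the output to $\overline a$. This is the key idea, and it makes both the uniform-largeness detour and the Sidon-set trick unnecessary.
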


\begin{proof}
(1) Since	$\varphi_{\overline{a}}(M)$ has cardinality $\geq \lambda$, we can take a sequence $\langle \overline{a}_{\alpha}: \alpha < \lambda\rangle$ of length $\lambda$ of pairwise distinct elements of $\varphi_{\overline{a}}(M)$  with no repetition. We apply Lemma \ref{k11}
	to find increasing sequences $\overline{\beta} = \langle \beta_{i}: i < \lambda \rangle$  and $\overline{\gamma} = \langle \gamma_{i}: i < \lambda \rangle  $ such that for all $i<\lambda$
		\begin{enumerate}[(a)]
		\item $\beta_{i} < \gamma_{i},$
		
		\item $\overline{a} - \overline{a}_{\beta_{i}} + \overline{a}_{\gamma_{i}}$ is equivalent to $\overline{a}.$
	\end{enumerate}
Set $\overline{b}_i:=\overline{a} - \overline{a}_{\beta_{i}} + \overline{a}_{\gamma_{i}}$. Since $\overline{a}_{\alpha}$'s are distinct,
 we deduce that $\overline{a} \neq \overline{b}_i$, for at least one $i<\lambda$. Thanks to (b), we know
$\overline{b}_i$ is equivalent to $\overline{a}.$

(2) Let $X= \{\overline{b} \in \varphi_{\overline{c}}(M): \overline{b} \text{~is equivalent to~} \overline{c}    \},$ and let $\mu=|X|.$ Suppose towards contradiction that $\mu < \lambda,$ and let
$\langle \overline{b}_i: i<\mu  \rangle$  enumerate $X$. Since
 $ \varphi_{\overline{c}}(M)$ has cardinality $\geq \lambda,$ let
$\langle \overline{c}_{\alpha}: \alpha < \lambda\rangle$ be a sequence of length $\lambda$ of pairwise distinct elements of $\varphi_{\overline{c}}(M)$  with no repetition. By induction on $\alpha < \lambda$
we can find $\xi_\alpha < \lambda$ such that
\begin{enumerate}
\item[$(*)_\alpha$:] $\overline{c}_{\xi_\alpha} \notin \{ \overline{b}_i - \overline{c} +\overline{c}_{\xi_\beta}: \beta < \alpha, i< \mu    \}$.
\end{enumerate}
By the argument of clause (1), applied to the sequence $\langle \overline{c}_{\xi_{\alpha}}: \alpha < \lambda\rangle$, we can find some $\beta_{\iota} < \gamma_{\iota}< \lambda$ such that
$\overline{c} - \overline{c}_{\xi_{\beta_{\iota}}} + \overline{c}_{\xi_{\gamma_{\iota}}}$ is equivalent   to  $\overline{c}.$   Thus for some
$i<\mu, \overline{c} - \overline{c}_{\xi_{\beta_{\iota}}} + \overline{c}_{\xi_{\gamma_{\iota}}}=\overline{b}_i$. But then
\[
 \overline{c}_{\xi_{\gamma_{\iota}}} = \overline{b}_i -  \overline{c} + \overline{c}_{\xi_{\beta_{\iota}}},
\]
which contradicts $(*)_{\gamma_{\iota}}$.

(3) By Lemma \ref{lem3} and clause (2).
	\end{proof}
\begin{lemma}\label{k17}
\begin{enumerate}
\item Suppose $\bff=(M,  \lambda, \kappa, \theta,\Omega)$ is a general frame,  $\varp < \theta$ and $\overline{a}_{\alpha} \in {}^{\varp}M$ for $\alpha < \kappa$. Then there is some $\alpha < \kappa$ such that the set $\{ \beta < \kappa: \overline{a}_{\beta} \in \varphi_{\overline{a}_{\alpha}} (M) \}$ is unbounded in $\kappa.$

    \item In clause (1), we can replace $\kappa$ by any cardinal $\kappa' \geq \kappa.$
 \end{enumerate}
\end{lemma}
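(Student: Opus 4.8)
The plan is to recast the statement through the minimality clause of Definition \ref{k2}(3). For any $\overline{a},\overline{b}\in{}^{\varp}M$ one has $\overline{b}\in\varphi_{\overline{a}}(M)$ if and only if $\varphi_{\overline{b}}(M)\subseteq\varphi_{\overline{a}}(M)$: the direction $(\Leftarrow)$ is immediate from $\overline{b}\in\varphi_{\overline{b}}(M)$, and $(\Rightarrow)$ is exactly minimality applied with $\psi=\varphi_{\overline{a}}$. Thus, writing $S_\alpha:=\{\beta<\kappa:\overline{a}_\beta\in\varphi_{\overline{a}_\alpha}(M)\}$, membership $\beta\in S_\alpha$ is the same as $\varphi_{\overline{a}_\beta}(M)\subseteq\varphi_{\overline{a}_\alpha}(M)$, and the conclusion of clause (1) is that some $S_\alpha$ is unbounded in $\kappa$. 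The whole lemma is therefore a statement about the quasi-order $\overline{a}_\beta\leq\overline{a}_\alpha$ on the given indices, and Definition \ref{k2}(4)(a) is precisely the assertion that this quasi-order admits no ``bad'' $\kappa$-sequence (one in which no later formula is $\subseteq$ an earlier one).

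For clause (1) I would argue by contradiction. Suppose every $S_\alpha$ is bounded in $\kappa$, and put $g(\alpha):=\sup S_\alpha+1<\kappa$. Using regularity of $\kappa$, build an increasing sequence $\langle\alpha_i:i<\kappa\rangle$ by choosing, at stage $j$, any $\alpha_j<\kappa$ with $\alpha_j>\sup_{i<j}\alpha_i$ and $\alpha_j\geq\sup_{i<j}g(\alpha_i)$; both suprema are below $\kappa$ since $j<\kappa=\cf(\kappa)$. Then $\alpha_j\notin S_{\alpha_i}$ for every $i<j$, i.e. $\overline{a}_{\alpha_j}\notin\varphi_{\overline{a}_{\alpha_i}}(M)$, so by the equivalence above $\varphi_{\overline{a}_{\alpha_i}}(M)\not\supseteq\varphi_{\overline{a}_{\alpha_j}}(M)$ whenever $i<j$. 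Hence the $\kappa$-sequence of formulas $\langle\varphi_{\overline{a}_{\alpha_i}}:i<\kappa\rangle$ has no pair $i<j$ with $\varphi_{\overline{a}_{\alpha_i}}(M)\supseteq\varphi_{\overline{a}_{\alpha_j}}(M)$, directly contradicting Definition \ref{k2}(4)(a). This settles clause (1); the only ingredients are minimality (via the equivalence) and regularity of $\kappa$.

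For clause (2) the first remark is that the descent property \ref{k2}(4)(a) is inherited by any $\kappa'$-indexed family of $\cL$-formulas, simply by restricting to the first $\kappa$ terms. Consequently, whenever $\cf(\kappa')\geq\kappa$ the construction of clause (1) goes through verbatim with $\kappa'$ as index set: at each stage $j<\kappa$ the relevant suprema are taken over fewer than $\kappa\leq\cf(\kappa')$ ordinals below $\kappa'$ and hence stay below $\kappa'$, and the resulting length-$\kappa$ bad sequence again contradicts (4)(a). The remaining, genuinely delicate, case is $\cf(\kappa')<\kappa$, where the naive construction can get stuck once the boundedly many downsets $S_{\alpha_i}$ already collected exhaust a cofinal part of $\kappa'$. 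Here I would induct on $\kappa'$: fix an increasing continuous cofinal sequence $\langle\kappa'_\eta:\eta<\cf(\kappa')\rangle$ with $\kappa'_0=\kappa$ and each $\kappa'_\eta$ a cardinal in $[\kappa,\kappa')$, apply the inductive hypothesis to each initial segment $\langle\overline{a}_\beta:\beta<\kappa'_\eta\rangle$ to obtain $\delta_\eta<\kappa'_\eta$ whose downset is unbounded in $\kappa'_\eta$, and exploit the monotonicity $\varphi_{\overline{a}_\beta}(M)\subseteq\varphi_{\overline{a}_\alpha}(M)\Rightarrow S_\beta\subseteq S_\alpha$ together with (4)(a). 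I expect the amalgamation step to be the main obstacle: assigning to each $\beta$ its ``level'' $\ell(\beta):=\min\{\eta:S_\beta\subseteq\kappa'_\eta\}$ and using $\beta\in S_\beta$ one sees that the level sets satisfy $\{\beta:\ell(\beta)=\eta\}\subseteq\kappa'_\eta$, so no single level is cofinal in $\kappa'$; one must therefore combine the fewer-than-$\kappa$ partial witnesses $\delta_\eta$ across cofinally many levels into a single index $\alpha<\kappa'$ with $S_\alpha$ cofinal, and it is here that the descent property at $\kappa$ (rather than merely at $\cf(\kappa')$) must be brought to bear.
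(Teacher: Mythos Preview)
Your argument for clause (1) is correct and is exactly the paper's argument: assume every $S_\alpha$ is bounded, iterate the bound function to produce a length-$\kappa$ increasing sequence of indices along which no pair satisfies $\varphi_{\overline{a}_{\alpha_i}}(M)\supseteq\varphi_{\overline{a}_{\alpha_j}}(M)$ for $i<j$, and contradict Definition~\ref{k2}(4)(a). The preliminary reformulation via minimality (your equivalence $\overline{b}\in\varphi_{\overline{a}}(M)\Leftrightarrow\varphi_{\overline{b}}(M)\subseteq\varphi_{\overline{a}}(M)$) is also what the paper is implicitly using.

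For clause (2) the paper does \emph{not} split into cases by cofinality. It runs the very same construction once, for arbitrary $\kappa'\geq\kappa$: from the bound function $\alpha\mapsto\beta_\alpha$ it defines an increasing continuous sequence $\langle\zeta_\alpha:\alpha<\kappa\rangle$ inside $\kappa'$ by $\zeta_0=0$, $\zeta_{\alpha+1}=\beta_{\zeta_\alpha}$, $\zeta_\delta=\sup_{\alpha<\delta}\zeta_\alpha$, and then applies (4)(a) to $\langle\varphi_{\overline{a}_{\zeta_\alpha}}:\alpha<\kappa\rangle$. So your treatment of the case $\cf(\kappa')\geq\kappa$ coincides with the paper's entire proof.

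You are right, however, that this construction as written presupposes $\cf(\kappa')\geq\kappa$: otherwise the continuous limit $\zeta_\delta$ can hit $\kappa'$ at some $\delta<\kappa$ and the recursion stops. The paper simply asserts the sequence lies below $\kappa'$ and does not discuss this point. Your inductive sketch for the singular case is more scrupulous than the paper here, but you leave it unfinished (``I expect the amalgamation step to be the main obstacle''), so neither your write-up nor the paper's contains a complete argument for $\cf(\kappa')<\kappa$. This does not affect the rest of the paper: the only later use of Lemma~\ref{k17} (in Lemma~\ref{n11}) invokes clause (1), i.e.\ $\kappa'=\kappa$.
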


\begin{proof}
We prove clause (2). Set $\varphi_{\alpha}:=\varphi_{\overline{a}_{\alpha}} $. Suppose
on the way of contradiction that, for each $\alpha < \kappa',$ the set
$X_\alpha=\{ \beta < \kappa': \overline{a}_{\beta} \in \varphi_{\alpha} (M) \}$ is bounded in $\kappa'.$  So,
\begin{enumerate}
\item[$(*)_1$] $\forall\alpha<\kappa', \exists \alpha<\beta_\alpha<\kappa'$ such that $\forall \beta\geq \beta_\alpha$ we have $ \varphi_{\alpha}\ngeq\varphi_{\beta}.$
\end{enumerate}
We define an increasing  and continuous sequence $\langle \zeta_\alpha:\alpha<\kappa \rangle $ of ordinals less than $\kappa'$,  by induction on $\alpha$ as follows:

\begin{itemize}
	
	\item  $\zeta_0:=0,$
	
	\item $\zeta_{\alpha+1}:=\beta_{\zeta_\alpha}$,
	
	\item $\zeta_{\delta}:=\lim_{\alpha<\delta}\zeta_\alpha$ for  limit ordinal $\delta$.
\end{itemize}
Consider  the sequence
$\{\varphi_{\zeta_\alpha}:\alpha<\kappa\}$,
and apply the property presented in Definition \ref{k2}(4)(a) to find $\gamma<\delta<\kappa$ such that
\begin{enumerate}
\item[$(*)_2$] $ \varphi_{\zeta_\gamma}\geq\varphi_{\zeta_\delta}.$
\end{enumerate}
Since $\gamma<\delta$,  $\zeta_\delta\geq \zeta_{\gamma+1}=\beta_{\zeta_\gamma}$. We apply $(*)_1$ for $\alpha:=\zeta_\gamma$ and $\beta:=\zeta_\delta\geq \beta_\alpha$. This gives us $ \varphi_{\zeta_\gamma}\ngeq\varphi_{\zeta_\delta}, $ which contradicts $(*)_2$.
\end{proof}

In  what follows we need to use  a couple of results from \cite{Sh:110}. To make the paper more self contained, we borrow some definitions and results from it.
\begin{definition}
 \begin{enumerate}
	\item By a \textit{\ tree} \index{tree} we mean a partially-ordered set $(\cT,\leq )$ such that for all $%
t\in \cT$,  $\pred( t ):=\{s\in \cT:s<t\}, $ is a well-ordered set; moreover, there
is only one element $r$ of $\cT$, called the \textit{root} of $\cT$, such that
$\pred( r)$ is empty.

\item The order-type of $\pred(t)$ is called the height of $t$,
denoted by ht$(t)$.
\item The height of $\cT$ is $\sup \{$ht$(t) + 1:t\in \cT\}$. \end{enumerate}
\end{definition}

\begin{definition}
\label{quasi}
\begin{enumerate}
\item A \textit{quasi-order} $\mathcal{Q}$ is a pair $(\mathcal{Q},\leq _{\mathcal{Q}})$ where $\leq _{\mathcal{Q}}$ is a
reflexive and transitive binary relation on $\mathcal{Q}$.
	\item   $\mathcal{Q}$ is called $\kappa $\textit{--narrow}, if there is no
	\textit{antichain} in $\mathcal{Q}$ of size $\kappa $, i.e., for every $f:\kappa
	\rightarrow \mathcal{Q}$ there exist $\nu \neq \mu $ such that $f(\nu )\leq _{\mathcal{Q}}f(\mu )
	$.

	\item For a  quasi-order  $\mathcal{Q}$,  a $\mathcal{Q}$\textit{-labeled tree} is a pair $(\cT,\Phi
_{\cT}) $ consisting of a tree $\cT$ of height $\leq \omega $ and a function $%
\Phi _{\cT}:\cT\rightarrow \mathcal{Q}$.

\item $\mathcal{Q}$ is $\kappa$-well ordered if for every sequence
$\left\langle q_i: i<\kappa   \right\rangle $ of elements of $\mathcal{Q},$ there are $i<j<\kappa$ such that $q_i \leq_{\mathcal{Q}} q_j.$
\end{enumerate}
\end{definition}
\begin{remark}	On any set of $\mathcal{Q}$-labeled trees we define a
quasi-order by: $(\cT_{1},\Phi _{1})\preceq (\cT_{2},\Phi _{2})$ if and only if
there is a function $\nu :\cT_{1}\rightarrow \cT_{2}$ equipped  with the following properties:

\begin{itemize}
	
	\item[a)]  for all $t\in \cT_{1}$, $\Phi _{1}(t)\leq _{\mathcal{Q}}\Phi _{2}(\nu (t))$,
	
	\item[b)] $t\leq _{\cT_{1}}t^{\prime } \Longrightarrow \nu(t)\leq
	_{\cT_{2}}\nu(t^{\prime }),$
\item[c)] for all $t\in \cT_{1}$, $\Ht_{\cT_{1}}(t)=\Ht_{\cT_{2}}(\nu(t))$.
\end{itemize}

\end{remark}

\begin{definition}\label{beau}
\begin{enumerate}
\item Given infinite cardinals $\kappa$ and $\mu,$ the notation $\kappa \longrightarrow (\omega)_\mu^{<\omega}$ means that: for every
function $f: [\kappa]^{<\omega} \to \mu$, there exists an infinite
subset
	$X \subseteq \kappa$ and a function $g : \omega \to \mu$ such that $f(Y) = g(|Y|)$ for all finite subsets
	$Y$ of $X$.

\item	Let $\kappa_{\rm{beau}}$ denote the first beautiful\footnote{This also is called the first $\omega$-Erd\"{o}s cardinal.} cardinal. This is defined as the smallest cardinal $\kappa$ such that
	$\kappa \longrightarrow (\omega)_2^{<\omega}$.

\item Given a ring $R$ and an infinite cardinal $\theta$, let $\kappa_{\rm{beau}}(R, \theta)$ denote the least cardinal $\kappa$
such that $\kappa \longrightarrow (\omega)_{|R|+\theta^{<\theta}}^{<\omega}$.

\item
Given a vocabulary $\tau$, let $\kappa_{\rm{beau}}(\tau,\theta)$ denote the least cardinal $\kappa$
such that $\kappa \longrightarrow (\omega)_{|\tau|+\theta^{<\theta}}^{<\omega}$. If $\theta=\aleph_0$, we may omit it.
\end{enumerate}
 \end{definition}
Now, we can state:
\begin{fact}(Shelah, \cite[theorems 5.3+ 2.10] {Sh:110})
	\label{qwo}
	Let $\mathcal{Q}$ be a quasi-order of cardinality $<\kappa_{\rm{beau}}$, and $%
	\mathcal{S}$ be a set of $\mathcal{Q}$-labeled trees with $\leq \omega$ level. Then $\mathcal{S}$ is $\kappa_{\rm{beau}}$-narrow, and even
$\kappa_{\rm{beau}}$-well ordered.
\end{fact}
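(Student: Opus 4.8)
The plan is to obtain both conclusions from the single assertion that $\mathcal{S}$ is $\kappa_{\rm{beau}}$-well ordered, since well-orderedness immediately yields narrowness: an antichain $\langle s_i : i<\kappa_{\rm{beau}}\rangle$ would be a $\kappa_{\rm{beau}}$-sequence admitting no pair $i<j$ with $s_i\preceq s_j$, contradicting well-orderedness (the definition of $\preceq$ in the Remark being reflexive and transitive). So I would concentrate on well-orderedness. Throughout I would use the standard strengthening of the defining partition relation: the first beautiful cardinal is inaccessible and satisfies $\kappa_{\rm{beau}}\longrightarrow(\omega)^{<\omega}_{\mu}$ for \emph{every} $\mu<\kappa_{\rm{beau}}$, so any colouring of $[\kappa_{\rm{beau}}]^{<\omega}$ by $<\kappa_{\rm{beau}}$ colours admits an infinite homogeneous set $X$ with associated $g$, i.e.\ an infinite set of order-indiscernibles.

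Fix a sequence $\langle s_\alpha=(\cT_\alpha,\Phi_\alpha):\alpha<\kappa_{\rm{beau}}\rangle$ of members of $\mathcal{S}$; the target is $\alpha<\beta$ together with a map $\nu:\cT_\alpha\to\cT_\beta$ that is order-preserving, height-preserving and $\leq_{\mathcal{Q}}$-increasing on labels. Since heights lie in $\omega+1$ and $\kappa_{\rm{beau}}$ is regular and uncountable, I would first pass to a subsequence of full length on which all $\cT_\alpha$ share one height $h\leq\omega$. Next I would encode the whole sequence in a single first-order structure $A$ in a language $L$ of cardinality $<\kappa_{\rm{beau}}$: the universe carries the ordinal $\kappa_{\rm{beau}}$ (linearly ordered), the disjoint union of all the node sets, a function returning the index of the tree to which a node belongs, the tree orders, the height function into $\omega$, a copy of $(\mathcal{Q},\leq_{\mathcal{Q}})$, and one predicate per $q\in\mathcal{Q}$ recording labels; finally I would close $A$ under Skolem functions. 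Because $|\mathcal{Q}|<\kappa_{\rm{beau}}$ and $\kappa_{\rm{beau}}$ is a strong limit, $|L|<\kappa_{\rm{beau}}$ and the number of complete $L$-types of finite tuples from $\kappa_{\rm{beau}}$ in $A$ is $<\kappa_{\rm{beau}}$.

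Colour $\{\alpha_0<\dots<\alpha_{n-1}\}\in[\kappa_{\rm{beau}}]^{<\omega}$ by the $L$-type of $(\alpha_0,\dots,\alpha_{n-1})$ in $A$, and apply the partition relation to extract an infinite indiscernible sequence $\alpha_0<\alpha_1<\cdots$. The core of the argument is then to read an embedding $\cT_{\alpha_0}\hookrightarrow\cT_{\alpha_1}$ off the indiscernibility: the shift sending each Skolem term in $\alpha_0,\alpha_1,\dots$ to the same term in $\alpha_1,\alpha_2,\dots$ is a partial elementary map of $A$, and its restriction to (the relevant part of) $\cT_{\alpha_0}$ lands inside $\cT_{\alpha_1}$. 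Because $L$ records the tree order, the height function and the labelling, this restriction automatically preserves $\leq_{\cT}$ and $\rm{ht}$ and is $\leq_{\mathcal{Q}}$-increasing on labels, so it witnesses $s_{\alpha_0}\preceq s_{\alpha_1}$ and hence well-orderedness.

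I expect the genuinely hard point to be making the shift map \emph{total} on $\cT_{\alpha_0}$: a priori only nodes lying in the Skolem hull of the indiscernibles are moved, whereas a tree may have large width and many nodes outside that hull. Resolving this is what forces the full strength of the beautiful cardinal rather than mere weak compactness (the exponent-$2$ relation $\kappa\to(\kappa)^2_2$): one must arrange, before colouring, that each $\cT_\alpha$ is generated level by level over its own index by the Skolem functions --- equivalently, thin the sequence and enrich $A$ so that every node of $\cT_\alpha$ is a Skolem term in $\alpha$ and finitely many indiscernible parameters --- and then check that the level-preserving embedding assembled this way is coherent \emph{simultaneously} at all $h\leq\omega$ levels and across the branching. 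This uniform coherence across all finite exponents is precisely the content of the ``$(\ )^{<\omega}$'' in the partition relation, and it is the step for which I would invoke theorems~5.3 and~2.10 of \cite{Sh:110} to supply the full details.
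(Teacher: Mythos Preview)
The paper does not prove this statement at all: it is recorded as a \emph{Fact} with the citation ``\cite[theorems 5.3+2.10]{Sh:110}'' and is used as a black box in the proof of Theorem~\ref{L7}. There is therefore no in-paper argument to compare your proposal against.

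Your sketch is a reasonable outline of the standard indiscernibles approach to results of this type, and you correctly isolate where the real work lies (getting the shift map to be total on $\cT_{\alpha_0}$, i.e.\ arranging that every node is a Skolem term in the indiscernibles). But note that at exactly that point you yourself defer to \cite[theorems 5.3 and 2.10]{Sh:110} ``to supply the full details''; so your proposal does not actually go beyond the paper's own citation. If the goal is a self-contained proof, this is the gap you would have to fill: the argument in \cite{Sh:110} does not proceed by Skolemizing a single structure encoding all the trees, but rather by a direct combinatorial analysis of labelled trees (building, level by level, a sequence of partial embeddings and using the full strength of $\kappa\to(\omega)^{<\omega}_\mu$ to thread them together). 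Your Skolem-hull idea can be made to work, but the step ``thin the sequence and enrich $A$ so that every node of $\cT_\alpha$ is a Skolem term in $\alpha$ and finitely many indiscernible parameters'' is not innocent---a single tree $\cT_\alpha$ can have arbitrarily large cardinality, so its nodes cannot all be Skolem terms in a fixed finite tuple---and fixing it amounts to reproving the cited theorems.
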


We  are now ready to state and prove the main result of this section.
\begin{theorem}\label{L7}
\begin{enumerate}
\item[(i)]   Assume $M$ is an $R$-module and $\kappa=\kappa_{\rm{beau}}(R, \theta)$ and $\Omega$ is such that $1 \in \Omega$ and $|\Omega| \leq \theta$. Also assume that
    $\lambda \geq \kappa$ is regular and satisfies  $\lambda \to (\kappa + 1)_{4}^{3}$. The
    following hold:

    	\begin{enumerate}[(a)]
    	\item $\bff = (M,  \lambda, \kappa, \theta,\Omega)$ is an additive frame, whenever
    	$$\cL \subseteq \{ \varphi(\overline{x}): \varphi \in \cL_{\infty, \theta}^{\rm{cop}}(\tau_{M}), \rm{lg}(\overline{x}) < \theta \}$$
    	is  closed under arbitrary conjunctions.
    	
    	\item  $\bff = (M,  \lambda, \kappa, \theta,\Omega)$ is an additive frame, whenever
    	$$\cL \subseteq \{ \varphi(\overline{x}): \varphi \in \cL_{\infty, \theta}^{\rm{co}}(\tau_{M}), \rm{lg}(\overline{x}) < \theta \}$$
    	is  closed under arbitrary conjunctions.
    \end{enumerate}
\item [(ii)]  	Let $M$ be a $\tau$-model, $\kappa=\kappa_{\rm{beau}}(\tau,\theta)$ and let $\lambda \geq \kappa$ be regular and satisfies  $\lambda \to (\kappa + 1)_{4}^{3}$. Then $\bff = (M,  \lambda, \kappa, \theta,\Omega)$ is a additive frame, whenever
$$\cL \subseteq \{ \varphi(\overline{x}): \varphi \in \cL_{\infty, \theta}^{\rm{co}}(\tau_{M}), \rm{lg}(\overline{x}) < \theta \}$$
is  closed under arbitrary conjunctions.

\item[(iii)] Suppose in addition to  (i) (resp.  (ii)) that $|M| \geq \lambda$. Then  $\bff$ is an additive$^+$ frame.
\end{enumerate}
\end{theorem}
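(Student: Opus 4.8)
My plan is to run through the clauses of Definition \ref{k2} for $\bff=(M,\cL,\lambda,\kappa,\theta,\Omega)$ one at a time, reducing the whole statement to two combinatorial facts about the codes of formulas. Clauses (1) and (2) are immediate from the hypotheses on $M$ and on $\cL$. Clause (5) is cardinal bookkeeping: $\lambda$ is regular by assumption and $\kappa=\kappa_{\rm{beau}}(R,\theta)$ (resp.\ $\kappa_{\rm{beau}}(\tau,\theta)$) is regular and satisfies $\kappa>\theta^{<\theta}\ge\theta\ge|\Omega|+|\tau_M|$, since any $\kappa$ with $\kappa\to(\omega)^{<\omega}_{|R|+\theta^{<\theta}}$ must exceed its number of colours, while $|\Omega|\le\theta$ is assumed and $|\tau_M|\le\theta$ because $M$ is an additive $\theta$-model (in particular $|R|\le\theta$). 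In the $\cL^{\rm{cop}}$-case the additive clauses are automatic: clause (6) holds as $M$ is an additive $\theta$-model, and clause (7) is exactly Lemma \ref{a5}(1). Finally (iii) is immediate, since by Definition \ref{k2}(C) an additive frame with $|M_\bff|\ge\lambda$ is additive$^+$ by definition. Thus the content is concentrated in clauses (3) and (4).

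For clause (3) I would exploit closure of $\cL$ under arbitrary conjunctions: for $\overline{a}\in{}^{\varp}M$ set
\[
\varphi_{\overline{a}}(\overline{x}):=\bigwedge\big\{\psi(\overline{x})\in\cL:\len(\overline{x})=\varp,\ \overline{a}\in\psi(M)\big\},
\]
conjoining one representative per distinct solution set $\psi(M)\subseteq{}^{\varp}M$ (so the conjunction is set-sized, hence in $\cL$, the empty conjunction guaranteeing non-emptiness). Then $\overline{a}\in\varphi_{\overline{a}}(M)=\bigcap\psi(M)\subseteq\psi_0(M)$ for every $\psi_0\in\cL$ with $\overline{a}\in\psi_0(M)$, which is minimality (3)(b). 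The engine behind clause (4) is the tree coding of \cite{Sh:110}: to each $\varphi\in\cL^{\rm{co}}_{\infty,\theta}(\tau_M)$ I would attach a $\mathcal{Q}$-labelled tree $\cT_{\varphi}$ of height $\le\omega$ whose labels record its atomic and negated-atomic constituents, where $\mathcal{Q}$ is ordered by implication and $|\mathcal{Q}|\le\theta^{<\theta}<\kappa$. The coding must be arranged so that a tree embedding $\cT_{\varphi}\preceq\cT_{\psi}$ exhibits $\varphi$ as (isomorphic to) a special subformula of $\psi$; since $\rm{cop}$- and $\rm{co}$-formulas are hom-like (Lemma \ref{a5}), this gives $\psi(M)\subseteq\varphi(M)$, i.e.\ $\varphi(M)\supseteq\psi(M)$. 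By Fact \ref{qwo}, in the relativised form for $|\mathcal{Q}|<\kappa$, the set of these codes is $\kappa$-well ordered.

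Clause (4)(a) then drops out: given $\langle\varphi_\alpha:\alpha<\kappa\rangle$ in $\cL$ with codes $\cT_\alpha$, $\kappa$-well ordering yields $\alpha<\beta$ with $\cT_\alpha\preceq\cT_\beta$, whence $\varphi_\alpha(M)\supseteq\varphi_\beta(M)$, as required.

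The main obstacle is clause (4)(b), the triangle condition, and this is where $\lambda\to(\kappa+1)^3_4$ enters. Given $\langle\varphi_{\alpha,\beta}:\alpha<\beta<\lambda\rangle$ with codes $\cT_{\alpha,\beta}$, I would colour $[\lambda]^3$ with four colours via the two bits
\[
b_1(\alpha,\beta,\gamma)=\big[\cT_{\alpha,\beta}\preceq\cT_{\alpha,\gamma}\big],\qquad
b_2(\alpha,\beta,\gamma)=\big[\cT_{\alpha,\gamma}\preceq\cT_{\beta,\gamma}\big]\qquad(\alpha<\beta<\gamma),
\]
and take a homogeneous $H$ of order type $\kappa+1$. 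On $H$ both bits equal $1$: if $b_1\equiv0$, fixing $\alpha=\min H$ and applying $\kappa$-well ordering to the length-$\kappa$ sequence $\langle\cT_{\alpha,\beta}:\beta\in H\setminus\{\alpha\}\rangle$ produces $\beta<\gamma$ with $\cT_{\alpha,\beta}\preceq\cT_{\alpha,\gamma}$, a contradiction; symmetrically, if $b_2\equiv0$, fixing $\gamma=\max H$ (which exists, as $H$ has a last element) and applying $\kappa$-well ordering to $\langle\cT_{\alpha,\gamma}:\alpha\in H\setminus\{\gamma\}\rangle$ gives $\alpha<\beta$ with $\cT_{\alpha,\gamma}\preceq\cT_{\beta,\gamma}$, again a contradiction. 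Hence for all $\alpha_1<\alpha_2<\alpha_3$ in $H$ we get $\cT_{\alpha_1,\alpha_2}\preceq\cT_{\alpha_1,\alpha_3}$ and $\cT_{\alpha_1,\alpha_3}\preceq\cT_{\alpha_2,\alpha_3}$, so by transitivity $\cT_{\alpha_1,\alpha_2}\preceq\cT_{\alpha_2,\alpha_3}$; translating back, $\varphi_{\alpha_1,\alpha_2}(M)\supseteq\varphi_{\alpha_1,\alpha_3}(M)$ and $\varphi_{\alpha_1,\alpha_2}(M)\supseteq\varphi_{\alpha_2,\alpha_3}(M)$, which is precisely (4)(b). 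The points I expect to cost the most work are the faithfulness of the coding (that $\preceq$ implies $\supseteq$ uniformly in the common variable context) and the exact relativisation of Fact \ref{qwo} to $\kappa_{\rm{beau}}(R,\theta)$; note that the order type $\kappa+1$, rather than $\kappa$, is essential so that $H$ has both a minimum with $\kappa$ successors and a maximum with $\kappa$ predecessors.
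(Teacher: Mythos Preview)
Your proposal is correct and follows essentially the same route as the paper. The only differences are cosmetic: for clause (3) you conjoin directly over all $\psi\in\cL$ with $\overline{a}\in\psi(M)$, whereas the paper indexes the conjunction over elements $\overline{b}\in{}^{\varp}M$ via separating formulas; and for clause (4)(b) you colour triples by the \emph{syntactic} tree-embedding relation $\cT_{\alpha,\beta}\preceq\cT_{\alpha,\gamma}$, while the paper colours by the \emph{semantic} relation $\varphi_{\alpha,\beta}(M)\supseteq\varphi_{\alpha,\gamma}(M)$ and then invokes the already-established (4)(a) on the rows and columns of the homogeneous set. Both variants are equivalent once faithfulness of the coding is in hand, and you correctly flag that faithfulness (tree embedding $\Rightarrow$ solution-set containment, uniformly through $\bigwedge$, $\exists^\sigma$, $\forall^\sigma$) as the step demanding real work --- the paper discharges it by an explicit four-case induction on the depth of a node in the embedded tree.
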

\begin{proof}
 (i):   We have to show that $\bff$ satisfies the relevant items  of Definition \ref{k2}. Items (1), (2) and (5)
    are true by definition. As $M$ is an $R$-module,  clause (6) is valid.
  The validity of  (7)  follows from Lemma \ref{a5}.

Let us consider clause (3).
Thus suppose that   $\overline{a} \in {}^{\varp}M$, where  $\varp < \theta$. We are going to find some  $\varphi \in \cL$ such that:
    \begin{enumerate}[i)]
    	\item[$\bullet$] $\overline{a} \in \varphi(M),$
    	
    	\item[$\bullet$] if  $\psi \in \cL$ and  $\overline{a} \in \psi(M),$ then  $\varphi(M) \subseteq \psi(M).$
    \end{enumerate}
   For any $\overline{b} \in  {}^{\varp}M,$ if there is some formula
      	$\varphi(\overline{x})$ such that
      $$(\dagger)_1 \quad\quad\quad M\models\varphi[\overline{a}]\wedge \neg \varphi[\overline{b}],$$
      then let $\varphi_{\overline{b}}  \in \cL$ be such a formula. Otherwise, let
      $\varphi_{\overline{b}}  \in \cL$ be any  true formula such that
  $ \varphi_{ \overline{b}}(M)= {}^{\varp}M$.
Finally set
  $$(\dagger)_2 \quad\quad\quad\varphi:= \bigwedge \{ \varphi_{\overline{b}}: \overline{b} \in {}^{\varp}M  \}.$$
Now, we claim that $\varphi$ is as desired. First, we check (3)(a), that is $\overline{a} \in \varphi(M).$
As
 $\varphi(M)=\bigcap_{\overline{b} \in {}^{\varp}M}   \varphi_{\overline{b}}(M),$
  it suffices to show that  $\overline{a} \in \varphi_{\overline{b}}(M)$, for $\overline{b} \in {}^{\varp}M$.
  Fix $\overline{b}$ as above. If there is no formula $\varphi(\overline{x})$ as in $(\dagger)_1$, then
$\overline{a} \in {}^{\varp}M= \varphi_{ \overline{b}}(M)$, and we are done. Otherwise, by its definition, $M \models  \varphi_{ \overline{b}}(\overline{a})$, and hence, again we have $\overline{a} \in \varphi_{\overline{b}}(M)$.

 To see (3)(b) holds,  let $\psi(\overline{x}) $ be such that $ \overline{a} \in \psi(M).$ We have to show that  $\varphi(M) \subseteq \psi(M).$
 Suppose by the way of contradiction that $\varphi(M) \nsubseteq \psi(M).$
 Take $ \overline{b}\in\varphi(M)
     \setminus \psi(M)$. Now, $M \models \neg  \psi[\overline{b}],$ and by our assumption $M \models  \psi[\overline{a}].$ In particular, by our construction, the formula
    $\varphi_{\overline{b}}$ satisfies
    $$(\dagger)_3\quad\quad\quad M \models \varphi_{\overline{b}}[\overline{a}] \wedge \neg \varphi_{\overline{b}} [\overline{b}].$$

     Now,

      \begin{enumerate}
     	\item[$\bullet_{1}$]
     	 $\overline{b}\in\varphi(M)\stackrel{(+)}\subseteq\varphi_{\overline{b}}(M)$,
     	
     	\item[$\bullet_{2}$] by $(\dagger)_3$, $M \models  \neg \varphi_{\overline{b}} [\overline{b}]$, and hence $\overline{b}\notin \varphi_{\overline{b}}(M)$.
     \end{enumerate}
     By $\bullet_{1}$ and $\bullet_{2}$ we get a contradiction.

Now we turn to clause (4). First let us consider (4)(a). Thus suppose that
$\varphi_{\alpha}(\overline{x}) \in \cL$, for $\alpha < \kappa$, are given.
We should find some $ \alpha < \beta<\kappa $ such that $\varphi_{\alpha} \geq \varphi_{\beta}$, i.e., $\varphi_{\alpha}(M) \supseteq \varphi_{\beta}(M).$
To this end, first note that we can restrict ourselves to those formulas  such that both free and bounded variables appearing in them are among $\{x_i:i<\theta\}$, the set of free variables of
$\varphi$ has  the form $\{x_\zeta: \zeta  < \varepsilon \}$, and the quantifiers have the form
$ \exists \bar x_{[\varepsilon_0, \varepsilon_1)} \text{~and~} \forall \bar x_{[\varepsilon_0, \varepsilon_1)},$
 where $\varepsilon_0 < \varepsilon_1 <\theta,$ and $\bar x_{[\varepsilon_0, \varepsilon_1)}=\langle x_\xi: \varepsilon_0 \leq  \xi < \varepsilon_1      \rangle$.
 In what follows, writing a formula as $\varphi(\bar{x})$, we mean $\bar{x}$ lists the free variables appearing in $\varphi$ in increasing order.

We can consider a formula $\varphi(\bar{x})$ as a type $(\cT,\boldmath c)$ such that

  \begin{enumerate}[(a)]
 	\item $\cT$ is  a tree with $\leq \omega$ levels with no infinite branches,
 	
 	\item $\boldmath c$ is  a function with domain $\cT$,
 	
 	\item if $ t\in \cT\setminus \max(\cT)$,
 	then $c(t)$ is in the following set$$\bigg\{\wedge,\exists\bar{x},\forall\bar{x}:\bar{x} \emph{ has form }  \bar x_{[\varepsilon_0, \varepsilon_1)} \emph{ for some } \varepsilon_0 < \varepsilon_1<\theta\bigg\},$$
 	\item if $ t\in \max(\cT)$ then $\boldmath c(t)$ is an atomic formula in $\tau_M$.
 \end{enumerate}
Clearly,   $|\Rang(\boldmath c)|\leq\theta+|R|$. For each $\alpha < \kappa$ set
$\varphi_{\alpha}(\bar{x}):=(\cT_{\alpha},\boldmath c_{\alpha}).$

Let $\mathcal{Q}$ be the range of the function $c$. Then
it is a quasi-order under the $\leq$ relation, where the only relation is between atomic formulas in $\mathcal{Q}$, as  defined in Definition \ref{k86}, and clearly, it has cardinality
$
|\mathcal{Q}| \leq |\tau_M| + \theta^{<\theta} <\kappa_{\rm{beau}}.
$
In particular, by Fact \ref{qwo},  applied to the sequence
 $\big\langle (\cT_{\alpha},\boldmath c_{\alpha}): \alpha < \kappa \big\rangle,$
we get some    ${\alpha}<\beta$ and a function $f$ equipped with the following property:

\begin{enumerate}[(a)]
	\item[$(\ast)$] $f$ is a 1-1 function from  $\cT_{\alpha}$ into $\cT_{\beta}$, such that:
\begin{itemize}
\item $f$ is	 level preserving, i.e., for all $t\in \cT_{\alpha}, \Ht_{\cT_{\alpha}}(t)=\Ht_{\cT_{\beta}}(f(t))$,

\item $f$ is	order preserving, i.e., for all $t \leq_{\cT_{\alpha}}  s$, we have    $f(t) \leq_{\cT_{\beta}}  f(s),$

\item $f$ is	non-order preserving, i.e., for all incomparable $t, s$ in $\cT_{\alpha}$, we have    $f(t),f(s)$ are incomparable in $\cT_{\beta}$,

\item If $t\in \cT_{\alpha}\setminus\max(\cT_{\alpha})$, then $\boldmath c_{\alpha}(t) \leq \boldmath c_{\beta}(f(t)).$
 \end{itemize}
\end{enumerate}
Suppose $t\in \cT_{\alpha}\setminus\max(\cT_{\alpha})$. This in turns imply that $\boldmath c_{\alpha}(t)$ and $\boldmath c_{\beta}(f(t))$ are not atomic formulas, thus if
 $\boldmath c_{\alpha}(t) \neq \boldmath c_{\beta}(f(t))$, then $\boldmath c_{\alpha}(t)$ and $\boldmath c_{\beta}(f(t))$ are incomparable. Hence by the last bullet condition,
 $$t\in \cT_{\alpha}\setminus\max(\cT_{\alpha}) \Rightarrow \boldmath c_{\alpha}(t) = \boldmath c_{\beta}(f(t)).$$
Let  $\cT_{\alpha}':=\Rang(f)$ and $\boldmath c_{\alpha}':=\boldmath c_{\beta}\upharpoonright\cT_{\alpha}'$. By this notation, $\varphi_{\alpha}(\bar{x})$ can also be represented by $(\cT_{\alpha}',\boldmath c_{\alpha}')$. For any $t\in\cT_{\alpha}'$ we  define
$\varphi_t^1$ to be the formula represented by
$$\bigg(\{s\in\cT_{\alpha}':t\leq_{\cT_{\alpha}'}s\},\boldmath {c}_{\beta}\upharpoonright \{s\in\cT_{\alpha}':t\leq_{\cT_{\alpha}'}s\}\bigg),$$
and define $\varphi_t^2$ to be  the  formula
represented  by $$\bigg(\{s\in\cT_{\beta} :t\leq_{\cT_{\beta}'}s\},\boldmath c_{\beta}  \upharpoonright  \{s\in\cT_{\beta}:t\leq_{\cT_{\beta}'}s\}\bigg).$$
Note that the formula
$\varphi_t^1$ may have fewer free variables than $\varphi_t^2$, but we can add the remaining variables. So, we may and do  assume that  $\varphi_t^{\ell}=\varphi_t^{\ell}(\bar{x}_t)$  for $\ell=1,2.$
Now we are going to prove that for every $t$,
$
\varphi^2_t(M) \subseteq \varphi^1_t(M).$ Observe that this is enough as $\varphi^1_r=\varphi_\alpha$ and
$\varphi^2_r=\varphi_\beta$ where $r$ stands for the root of the tree.
This is done by induction on the depth of $t$ inside $\cT_{\alpha}'$
 which is possible, as  $\cT_{\alpha}'$ is well-founded.
By the way we defined our trees, it suffices to deal with the following  cases:
 \begin{enumerate}
	\item[]Case 1): $c_{\beta}(t)$ is a basic formula, i.e., an atomic formula or its negation.
	\item[]Case 2): $c_{\beta}(t)$ is $\wedge$.
	\item[]Case 3): $c_{\beta}(t)$ is $\exists \overline{x}'$ or just $c_{\beta}(t)$ is $\exists^{\sigma} \overline{x}'$.
	\item[]Case 4) $c_{\beta}(t)$ is $\forall \overline{x}'$ or just $c_{\beta}(t)$ is $\forall^{\sigma} \overline{x}'$.
\end{enumerate}
Let discuss each cases separably:

  Case 1): Here, $t$ is a maximal node of the tree $\cT_{\beta}$. Hence, necessarily a maximal node $
	\varphi^1_t=c_{\beta}(t)=\varphi^2_t.
$
Consequently, the conclusion is obvious.

 Case 2): Here, we have\begin{enumerate}
		\item[$\bullet$]
		$	\varphi^2_t:=\bigwedge\{\varphi^2_s:s\in\suc_{\cT_{\beta}}(t)\} $,
		
		\item[$\bullet$] $	\varphi^1_t:=\bigwedge\{\varphi^1_s:s\in\suc_{\cT_{\alpha}'}(t)\} $.
	\end{enumerate}
By the choice of the $\cT_{\alpha}'$ and the function $f$, we have  $\suc_{\cT_{\alpha}'}(t)\subseteq \suc_{\cT_{\beta}}(t).$ Now, due to the induction hypotheses$$s\in\suc_{\cT_{\alpha}'}(t)\Longrightarrow \varphi^2_s(M) \subseteq \varphi^1_s(M). $$ According to the definition of sanctification for $\wedge$ and
$\varphi^2_s$ we are done, indeed:

$$\begin{array}{ll}
\varphi^1_t(M) & = \bigcap\{\varphi^1_s(M):  s\in\suc_{\cT_{\alpha}'}(t)  \} \\
& \supseteq  \bigcap\{\varphi^2_s(M):  s\in\suc_{\cT_{\alpha}'}(t)  \}\\
& \supseteq \bigcap\{\varphi^s_s(M):  s\in\suc_{\cT_{\beta}}(t)  \}.
\end{array}$$

Case 3):
	Let $\overline{x}'_t=\overline{x}_s^{\frown}\overline{x}'$, and recall that
	$\suc_{\cT_{\beta}}(t)$ is singleton, and also  	 $\suc_{\cT_{\alpha}}(f^{(-1)}(t))$  is singleton, because $\boldmath c_{\beta}(t)=\boldmath c_{\alpha}(f^{-1}(t))$. This implies that $\suc_{\cT_{\alpha}'}(t)$  is singleton, say $\suc_{\cT_{\alpha}'}(t)=\{s\}=\suc_{\cT_{\beta}}(t)$. In order to see $\varphi^1_t(M) \supseteq \varphi^2_t(M)$, we take $\overa \in {}^{\lg(\overline{x})}M$ be such that $M\models\varphi^2_t[\overa]$ and shall  show
	that $M\models\varphi^1_t[\overa]$. Indeed,
	 $$\varphi^{\ell}_t(\overline{x}):=(\exists^{\sigma} \overline{x}')\varphi^{\ell}_s(\overline{x}_s,\overline{x}')\quad \ell=1,2,$$
 and since
$
 M \models (\exists^{\sigma} \overline{x}')\varphi^{2}_s(\overa,\overline{x}'),
$
 necessarily,  for some pairwise disjoint
	 $\overb_{\zeta} \in {}^{\lg(\overline{x}'}M$ one has $M\models\varphi^2_s[\overa,\overb_{\zeta}].$
	 Thanks to the inductive hypothesis, we know
	  $M\models\varphi^1_s[\overa,\overb_{\zeta}].$
	 According to the definition of sanctification, we have$$M\models (\exists^{\sigma} \overline{x}')\varphi^1_s(\overa,\overline{x}'),$$which means that $M\models\varphi^1_t[\overa]$, as promised.

 Case 4):
	Let $\overline{x}'_t=\overline{x}_s^{\frown}\overline{x}'$, and recall that
$\suc_{\cT_{\beta}}(t)$ is singleton, and also  	 $\suc_{\cT_{\alpha}}(f^{-1}(t))$  is singleton, because $\boldmath c_{\alpha}(f^{-1}(t))\subseteq\boldmath c_{\beta}(t)  $. This implies that $\suc_{\cT_{\alpha}'}(t)$  is singleton, say $\suc_{\cT_{\alpha}'}(t)=\{s\}=\suc_{\cT_{\beta}}(t)$. In order to see $\varphi^1_t(M) \supseteq \varphi^2_t(M)$, we take $\overa \in {}^{\lg(\overline{x})}M$ be such that $M\models\varphi^2_t[\overa]$ and shall  show
that $M\models\varphi^1_t[\overa]$. Similar to the Case (3), we can write
$$\varphi^{\ell}_t(\overline{x}):=(\forall^{\sigma} \overline{x}')\varphi^{\ell}_s(\overline{x}_s,\overline{x}')\quad \ell=1,2.$$
Suppose it is not the case that $M \models \varphi^1_t[\overa]$. This means, by Definition \ref{a21}(2)(b),
that there are pairwise disjoint
	$\overb_{\zeta} \in {}^{\lg(\overline{x}')}M$, for $\zeta<\sigma$ such that  $M\models\neg\varphi^1_s[\overa,\overb_{\zeta}].$
By the induction hypothesis, we have
$\varphi^1_s(M) \supseteq \varphi^2_s(M)$, hence  for each $\zeta<\sigma$,
$$M\models\neg\varphi^2_s[\overa,\overb_{\zeta}].$$
The later means  that $M \models \varphi^2_t[\overa]$ is not true, which contradicts our initial assumption.
This completes the proof of clause (4)(a).

Finally, let us turn to check the property presented in clause (4)(b) from Definition \ref{k2}.
Suppose   $\varphi_{\alpha, \beta}(\overline{x}) \in \cL$ for $\alpha < \beta < \lambda$ are given. We need to find some $\alpha_{1} < \alpha_{2} < \alpha_{3} < \kappa$ such that  $\varphi_{\alpha_{1}, \alpha_{2}} \geq \varphi_{\alpha_{1}, \alpha_{3}}, \varphi_{\alpha_{2}, \alpha_{3}}.$  To see this,
we  define a coloring $ \mathbf{c}: [\lambda]^{3} \to 2 \times 2$ as follows. Fix $\alpha < \beta < \gamma < \lambda$, and define
the following pairing function

  $$ \mathbf{c} (\{ \alpha, \beta, \gamma \}):= \bigg(\emph{truth value of }  \varphi_{\alpha, \beta} \geq \varphi_{\alpha, \gamma} ,  \emph{  truth value of }  \varphi_{\alpha, \gamma} \geq \varphi_{\beta, \gamma}  \bigg).$$
  By the assumption  $\lambda \to (\kappa + 1)_{4}^{3}$, there is $X \subseteq \lambda$ of order type $\kappa +1$ such that $\mathbf{c} \rest [X]^{3}$ is constant. Let $\alpha_{i} \in X$, for $i \leq \kappa$,  be  an increasing enumeration of $X$.  Consider the sequence
   $ \{\varphi_{\alpha_{0}, \alpha_{ i}}: i< \kappa\}, $
   and applying clause (4)(a) to it.  This gives us  $i < j < \kappa$ such that $\varphi_{\alpha_{0}, \alpha_{i}} \geq \varphi_{\alpha_{0}, \alpha_{j}}.$
  Note that this implies that $\mathbf{c} (\alpha_{0},\alpha_{i},\alpha_{j})= (1,\iota)$, for some  $\iota  \in \{0, 1\}.$
   Since   $\mathbf{c} \rest [X]^{3}$ is constant, it follows that  for any $\alpha < \beta < \gamma$   from $X$, we have
$\mathbf{c} (\alpha, \beta, \gamma)= (1,\iota)$, in particular
    \begin{itemize}
        \item[$(*)_1$]   $\varphi_{\alpha, \beta} \geq \varphi_{\alpha, \gamma}$, for all $\alpha < \beta < \gamma$ in $X$.
    \end{itemize}
       Again, applying clause (4)(a) to
 the sequence  $\{\varphi_{\alpha_{i}, \alpha_{ \kappa}}: i< \kappa\},$ we can find  some $i < j < \kappa$ such that  $\varphi_{\alpha_{i}, \alpha_{\kappa}} \geq \varphi_{\alpha_{j}, \alpha_{\kappa}}.$
 It follows that $\mathbf{c} (\alpha_{i},\alpha_{j},\alpha_{\kappa})= (1,1)$,  hence as
   $\mathbf{c} \rest [X]^{3}$ is constant, we have
   $\mathbf{c} (\alpha, \beta, \gamma)= (1,1)$, for all $\alpha < \beta < \gamma$  from $X$.
    In particular,
      \begin{itemize}
      	\item[$(*)_2$]   $\varphi_{\alpha, \gamma} \geq \varphi_{\beta,\gamma }$, for any $\alpha < \beta < \gamma$  in $X$.
      \end{itemize}
  Now, combining $(*)_1$ along with $(*)_2$, we get that for all  $\alpha < \beta < \gamma$  from $X$
   $$\varphi_{\alpha, \beta} \geq \varphi_{\alpha, \gamma} \geq  \varphi_{\beta,\gamma },$$
   and this completes the proof of (i).

   (ii): This is similar to case (i).
\end{proof}
\begin{remark}
By the Erdos-Rado partition theorem, see \cite{EHMR}, it suffices to take
$\lambda=\beth_2(\kappa)^+$.
\end{remark}

\section{$\kappa$-algebraic closure}\label{4}
\bigskip

In this section, and among other things, we define the concept of closure of a sequence inside an additive model
and present some properties of it.

\subsection{A closure operation with respect to  formulas}

\begin{definition}\label{n8}
	Suppose $\bff=(M, \mathscr{L},  \lambda, \kappa,  \theta, \Omega)$ is an additive frame, $\varp < \theta$ and     $\overline{a} \in {}^{\varp} M $. We define the closure of $\overline{a}$ in $M$ as the following:
	$$\rm{cl}(\overline{a}, M) := \{ b \in M: \varphi_{{\overline{a}^{\smallfrown} \langle b \rangle }}(M, \overline{a}) \ \text{has cardinality} \ < \kappa \}.$$
\end{definition}

In what follows we will use the following result several times:

\begin{lemma}\label{n11}
	Suppose $\bff=(M, \mathscr{L},  \lambda, \kappa,  \theta, \Omega)$ is a general frame as in Theorem \ref{L7}, $\varp < \theta$ and $\overline{a} \in {}^{\varp}M$. We assume in addition
	that $\kappa\in \Omega$ and $\cL$ is closed under $\exists^\kappa$.
	Then the  following three assertions are true:
	\begin{enumerate}[(a)]
		\item     the set $\rm{cl}(\overline{a}, M)$ has cardinality $< \kappa.$
		
		\item $\{ a_i: i < \varp    \} \subseteq \rm{cl}(\overline{a}, M).$
		
		\item  Assume $b \in \rm{cl}(\overa, M)$. Then  $\rm{cl}(\overa ^{\frown} \langle b \rangle, M) \subseteq \rm{cl}(\overa, M)$.
	\end{enumerate}
\end{lemma}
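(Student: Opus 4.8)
The plan is to phrase all three clauses in terms of the \emph{fibres} of the minimal formulas $\varphi_{\overline{a}^{\frown}\langle b\rangle}$ and to exploit clause (7) of Definition \ref{k2}: every solution set $\varphi(M)$ is a subgroup of the appropriate power of $M$, so for a fixed parameter all its non-empty fibres are cosets of one and the same subgroup, and hence are equinumerous. I write $\overline{a}b$ and $\overline{a}bd$ for $\overline{a}^{\frown}\langle b\rangle$ and $\overline{a}^{\frown}\langle b\rangle^{\frown}\langle d\rangle$, and for a formula $\chi$ in these variables I write $\chi(M,\overline{a})$ (resp. $\chi(M,\overline{a}b')$) for the fibre obtained by fixing the $\overline{a}$-block (resp. the $\overline{a}b'$-block) and letting the remaining coordinate(s) range over $M$. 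Recall $\kappa$ is regular, so a union of $<\kappa$ sets each of size $<\kappa$ has size $<\kappa$. Clause (b) is then immediate: the atomic formula $\vartheta(\overline{x},y):=(x_i-y=0)$ lies in $\cL$ and holds of $\overline{a}^{\frown}\langle a_i\rangle$, so by the minimality condition of Definition \ref{k2}(3)(b) we get $\varphi_{\overline{a}a_i}(M)\subseteq\vartheta(M)$; taking the fibre over $\overline{a}$ gives $\varphi_{\overline{a}a_i}(M,\overline{a})\subseteq\{c: a_i=c\}=\{a_i\}$, whence $a_i\in\rm{cl}(\overline{a},M)$.

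For clause (a) I would argue by contradiction. If $|\rm{cl}(\overline{a},M)|\geq\kappa$, then by recursion on $\alpha<\kappa$ I can choose distinct $b_\alpha\in\rm{cl}(\overline{a},M)$ with $b_\alpha\notin\bigcup_{\gamma<\alpha}\varphi_{\overline{a}b_\gamma}(M,\overline{a})$: each set $\varphi_{\overline{a}b_\gamma}(M,\overline{a})$ has size $<\kappa$ because $b_\gamma\in\rm{cl}(\overline{a},M)$, so by regularity of $\kappa$ the union (over $<\kappa$ indices) has size $<\kappa$, while $\rm{cl}(\overline{a},M)$ has size $\geq\kappa$. Now apply Definition \ref{k2}(4)(a) to the $\kappa$-sequence $\langle\varphi_{\overline{a}b_\alpha}:\alpha<\kappa\rangle$ of $\cL$-formulas to obtain $\alpha<\beta$ with $\varphi_{\overline{a}b_\alpha}(M)\supseteq\varphi_{\overline{a}b_\beta}(M)$. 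Passing to fibres over $\overline{a}$ and using $b_\beta\in\varphi_{\overline{a}b_\beta}(M,\overline{a})$ (by Definition \ref{k2}(3)(a)) yields $b_\beta\in\varphi_{\overline{a}b_\alpha}(M,\overline{a})$ with $\alpha<\beta$, contradicting the choice of $b_\beta$.

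Clause (c) is where the real work lies. Given $d\in\rm{cl}(\overline{a}b,M)$, I would introduce the formula
$$\psi(\overline{x},z):=(\exists y)\big(\varphi_{\overline{a}b}(\overline{x},y)\wedge\varphi_{\overline{a}bd}(\overline{x},y,z)\big),$$
which lies in $\cL$ (using closure of $\cL$ under finite conjunction, dummy variables, and existential quantification, available since $1\in\Omega$ and, as assumed, under $\exists^{\kappa}$) and holds of $\overline{a}^{\frown}\langle d\rangle$ with witness $y=b$; hence by minimality $\varphi_{\overline{a}d}(M)\subseteq\psi(M)$, and it suffices to bound $|\psi(M,\overline{a})|$. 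Unwinding the definition, $\psi(M,\overline{a})=\bigcup_{b'\in F}\varphi_{\overline{a}bd}(M,\overline{a}b')$, where $F:=\varphi_{\overline{a}b}(M,\overline{a})$ has size $<\kappa$ because $b\in\rm{cl}(\overline{a},M)$. Now set $H:=\varphi_{\overline{a}bd}(M)$, a subgroup of ${}^{\varp+2}M$, and $K:=\{z:(\overline{0},z)\in H\}$; every non-empty fibre $\varphi_{\overline{a}bd}(M,\overline{a}b')=\{z:(\overline{a},b',z)\in H\}$ is then a coset of $K$, so has size $|K|$. Since the particular fibre over $\overline{a}b$ contains $d$ and, by $d\in\rm{cl}(\overline{a}b,M)$, has size $<\kappa$, we conclude $|K|<\kappa$; hence every fibre has size $\leq|K|<\kappa$. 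Therefore $\psi(M,\overline{a})$ is a union of $<\kappa$ sets each of size $<\kappa$, so $|\psi(M,\overline{a})|<\kappa$ and $d\in\rm{cl}(\overline{a},M)$.

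The main obstacle is exactly the last clause: one must control the fibres of $H$ over \emph{all} parameters $\overline{a}b'$ with $b'$ ranging over the (generally many) elements of $F$, not merely over $\overline{a}b$ itself. The clean resolution is the group-theoretic observation that, $H$ being a subgroup, all its non-empty fibres over $\{\overline{a}\}\times M$ in the last coordinate are cosets of the single subgroup $K$, and hence all have the same cardinality $|K|$, which is then pinned below $\kappa$ by the one fibre we control. The passage through the existential formula $\psi$, together with the minimality of $\varphi_{\overline{a}d}$, is what transfers this fibre bound into the desired statement $d\in\rm{cl}(\overline{a},M)$.
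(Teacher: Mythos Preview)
Your proofs of clauses (a) and (b) are essentially the paper's: for (a) you invoke Definition~\ref{k2}(4)(a) directly where the paper routes through Lemma~\ref{k17}, and (b) is identical.

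For clause (c) you take a genuinely different route. The paper bounds each fibre $A_{b'}=\varphi_{\overline{a}bd}(M,\overline{a}b')$ by contradiction via the quantifier $\exists^{\kappa}$: if some $|A_{b'}|\ge\kappa$, then the $\cL$-formula $\chi(\overline{x},y):=\varphi_{\overline{a}b}(\overline{x},y)\wedge(\exists^{\kappa}z)\,\varphi_{\overline{a}bd}(\overline{x},y,z)$ is satisfied by $(\overline{a},b')$ but not by $(\overline{a},b)$ (since $d\in\rm{cl}(\overline{a}b,M)$), and this contradicts the minimality of $\varphi_{\overline{a}b}$ because $b'\in\varphi_{\overline{a}b}(M,\overline{a})$ forces $\varphi_{\overline{a}b'}\ge\varphi_{\overline{a}b}$, hence $(\overline{a},b)\in\chi(M)$. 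You instead exploit additivity (clause~(7) of Definition~\ref{k2}): since $H=\varphi_{\overline{a}bd}(M)$ is a subgroup of ${}^{\varp+2}M$, all its non-empty last-coordinate fibres are cosets of the single subgroup $K=\{z:(\overline{0},0,z)\in H\}$ and hence equinumerous; the fibre over $(\overline{a},b)$ then pins $|K|<\kappa$ for all of them at once.

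Your argument is cleaner and, notably, never uses the hypothesis that $\cL$ is closed under $\exists^{\kappa}$ --- you only need $\exists^{1}$ to form $\psi$, a closure both proofs tacitly rely on for the final inclusion $\varphi_{\overline{a}d}(M)\subseteq\psi(M)$. The paper's argument trades this economy for not invoking the subgroup structure at all, so it would survive in a purely general (non-additive) frame; yours would not. Both are correct under the stated hypotheses.
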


\begin{proof}
	(a).	Suppose
	not, and let
	$\overline{a} \in {}^{\varp}M$ be such that the set $\rm{cl}(\overline{a}, M)$ has cardinality $\geq \kappa.$ This gives us a family $\{b_\alpha:\alpha<\kappa\}\subseteq  \rm{cl}(\overline{a}, M)$. Define $\overline{a}_\alpha:=\overline{a}^{\smallfrown}\langle b_\alpha \rangle$.
In the light of Lemma \ref{k17},
	there is $\alpha_* < \kappa$ such that the set
	$$X:=\{ \beta < \kappa: \overline{a}_{\beta} \in \varphi_{\overline{a}_{\alpha_*}} (M) \}$$
	is unbounded in $\kappa.$
	So,
	$\{b_{\beta}:\beta\in X\}\subseteq\varphi_{\overline{a}_{\alpha_*}} (M,\overline{a})$, which implies that
	$ |\varphi_{\overline{a}_{\alpha_*}}(M,\overline{a})|\geq |X|=\kappa.$
	This contradicts the fact that $b_{\alpha_*} \in \rm{cl}(\overline{a}, M).$

	(b).  Let $i< \varp$, and define the formula $\psi(\overline{x}, y)$ as
$
	\psi(\overline{x}, y):= (y=x_i).
$
	Then $\psi(\overline{a}, M)=\{a_i\}$.
	It is also clear that $\psi(\overline{x}, y)$ is minimal with respect to this property. This implies that
$\varphi_{\overline{a}^{\frown} \langle a_i\rangle}(\overline{a}, M)=
	\psi(\overline{a}, M).$
 In particular, $|\varphi_{\overline{a}^{\frown} \langle a_i\rangle}(\overline{a}, M)|=1 < \kappa$,
	and consequently $a_i \in \rm{cl}(\overline{a}, M).$
	
	(c). Suppose $d \in \rm{cl}(\overa ^{\frown} \langle b \rangle, M)$. Thanks to Definition \ref{n8}, $\varphi_{\overa ^{\frown} \langle b \rangle^{\smallfrown} \langle d \rangle }(\overline{a}, b, M)$
	has cardinality less that $\kappa.$
	 As  $b \in \rm{cl}(\overa, M)$, clearly
  \begin{itemize}
	\item[$(\ast)_1$:] the set $ B:=\varphi_{{\overline{a}^{\smallfrown} \langle b \rangle }}(\overline{a},M)\emph{ has cardinality } <\kappa.$
	\end{itemize}	
	For $b_1\in B$ let $A_{b_1}:=\varphi_{\overa ^{\frown} \langle b  \rangle^{\smallfrown} \langle d \rangle }( \overline{a},b_1,M)$.
	We now show that
	  \begin{itemize}
		\item[$(\ast)_2:$] if $b_1\in B$
		then $A_{b_1}\emph{ has cardinality } <\kappa.$
	\end{itemize}

Assume towards a contradiction that
  $|A_{b_1}| \geq \kappa $ for some $b_1 \in B$. Reformulating this, means that:$$M\models\varphi_{\overa ^{\frown} \langle b\rangle}[\overa,b_1] \wedge\exists^\kappa  z \ \varphi_{\overa ^{\frown} \langle b  \rangle^{\smallfrown} \langle d \rangle }[ \overline{a},b_1,z ].$$
Let $$\psi(\overline{x},{y}):=\varphi_{\overa ^{\frown} \langle b\rangle}(\overline{x}, {y}) \wedge\exists^\kappa  z \ \varphi_{\overa ^{\frown} \langle b  \rangle^{\smallfrown} \langle d \rangle }( \overline{x}, {y},z ),$$and recall that $\psi\in\cL$ and $ \overa ^{\frown} \langle b_1\rangle  \in\psi(M)$.
Note that	$ \overa ^{\frown} \langle b \rangle  \notin\psi(M)$, as $ |\varphi_{\overa ^{\frown} \langle b  \rangle^{\smallfrown} \langle d \rangle }( \overline{a},b,M)|<\kappa.$
Next we bring the following claim:
	  \begin{itemize}
	\item[$(\ast)_{2.1}$:] if $\overa ^{\frown} \langle b_1\rangle  \notin \varphi_{\overa ^{\frown} \langle b\rangle}(M).$
\end{itemize}

 To see this we argue by  the way of contradiction that $ \overa ^{\frown} \langle b_1\rangle  \in\varphi_{\overa ^{\frown} \langle b\rangle}(M)$. This implies following the minimality condition that $$\varphi_{\overa ^{\frown} \langle b\rangle}(M)\subseteq \varphi_{\overa ^{\frown} \langle b_1\rangle}(M)\subseteq\psi(M).$$Consequently, $\overa ^{\frown} \langle b\rangle\in\psi(M)$.  This contradiction completes the proof of   $(\ast)_{2.1}
	. $ But, we have $\langle b_1\rangle \in\varphi_{\overa ^{\frown} \langle b\rangle}(\overa ,M)$. This yields that
		  \begin{itemize}
		\item[$(\ast)_{2.2}$:]   $\overa ^{\frown} \langle b_1\rangle  \in \varphi_{\overa ^{\frown} \langle b\rangle}(M).$
	\end{itemize}

	 But  $ (\ast)_{2.1} $ and $ (\ast)_{2.2} $ together lead to a contradiction. In sum, the desired property $(\ast)_2$ is valid.
Recalling that $\kappa$ is regular,
	it follows that
			  \begin{itemize}
		\item[$(\ast)_3$:]   $\bigcup_{b_1\in B} \varphi_{\overa ^{\frown} \langle b  \rangle^{\smallfrown} \langle d \rangle }( \overline{a},b_1,M)\emph{ has cardinality } <\kappa.$
	\end{itemize}
	We will show that:
				  \begin{itemize}
		\item[$(\dagger)$:]   $\varphi_{{\overline{a}^{\smallfrown} \langle d \rangle }}(\overline{a},M)\subseteq \bigcup_{b_1\in B} \varphi_{\overa ^{\frown} \langle b  \rangle^{\smallfrown} \langle d \rangle }( \overline{a},b_1,M),$
	\end{itemize}
	from which it will follow that
	$\varphi_{{\overline{a}^{\smallfrown} \langle d \rangle }}(\overline{a}, M)$
	has cardinality less that $\kappa,$ and hence by definition, $d \in \rm{cl}(\overa  , M)$. Let us prove $(\dagger)$. To this end,
	let $d_1\in\varphi_{{\overline{a}^{\smallfrown} \langle d \rangle }}(\overline{a}, M)$. This implies that $\overline{a}^{\frown}\langle d_1 \rangle\in\varphi_{{\overline{a}^{\smallfrown} \langle d \rangle }}(M)$.
	Clearly, $$M\models \exists y\varphi_{\overa ^{\frown} \langle b  \rangle^{\smallfrown} \langle d \rangle }[ \overline{a},y,d],$$
	hence,
	$$M\models \exists y\varphi_{\overa ^{\frown} \langle b  \rangle^{\smallfrown} \langle d \rangle }[ \overline{a},y,d_1].$$
This gives us some $b_1$ so that
	$$M\models \varphi_{\overa ^{\frown} \langle b  \rangle^{\smallfrown} \langle d \rangle }[ \overline{a},b_1,d_1].$$ Then ${b_1\in B}$,
and $d_1 \in \varphi_{\overa ^{\frown} \langle b  \rangle^{\smallfrown} \langle d \rangle }( \overline{a},b_1,M),$
 and consequently, $(\dagger)$ holds. We are done.
\end{proof}

\subsection{An algebraic structure  over the closure-operation}\footnote{The result of this subsection is independent from the rest of
	the paper.}

\begin{definition}\label{n81}
Suppose $\bff=(M, \mathscr{L},  \lambda, \kappa,  \theta, \Omega)$ is a general frame. 	For $\overline{a} \in {}^{<\theta}M $, we introduce the following:
	
	\begin{enumerate}
		
		\item  $\rm{afn}(b, \overline{a}) = \{ c \in \rm{cl}(\overline{a}, M): \varphi_{\overa^{\smallfrown}\langle b \rangle}   \geq  \varphi_{\overa^{\smallfrown} \langle c \rangle} \}.$
		
		\item Suppose $\bff$ is abelian. Then $\rm{grp}(b, \overline{a}) = \{ c_{1} - c_{2}: c_1, c_2 \in \rm{afn}(b, \overline{a}) \}.$
		
		
	\end{enumerate}
\end{definition}

\begin{hypothesis}In what follows, and up to the end of this section, let us assume that  $\bff=(M, \mathscr{L},  \lambda, \kappa,  \theta, \Omega)$ is an additive frame,
$\kappa \in \Omega$ and $\mathscr{L}$ is closed under $\exists^\kappa x$.
\end{hypothesis}

\begin{lemma}\label{n14}
Let $\overline{a} \in {}^{<\theta} M $ and $b\in \rm{cl}(\overline{a}, M)$. Then $\rm{afn}(b, \overline{a}) $ is a subset of $\rm{cl}(\overline{a}, M)$ and it is affine.
\end{lemma}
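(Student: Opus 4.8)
The first assertion requires nothing: by Definition \ref{n81}(1) the set $\rm{afn}(b,\overline{a})$ is defined as a subset of $\rm{cl}(\overline{a},M)$, so $\rm{afn}(b,\overline{a})\subseteq\rm{cl}(\overline{a},M)$ holds by construction. The entire content is the affineness, i.e. that $c_0,c_1,c_2\in\rm{afn}(b,\overline{a})$ implies $c:=c_0-c_1+c_2\in\rm{afn}(b,\overline{a})$. Before starting I would record the membership reformulation that drives everything. Writing $G_b:=\varphi_{\overline{a}^{\smallfrown}\langle b\rangle}(M)$, I claim that for $c\in\rm{cl}(\overline{a},M)$ the defining condition of $\rm{afn}(b,\overline{a})$ is equivalent to the single statement $\overline{a}^{\smallfrown}\langle c\rangle\in G_b$. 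Indeed, since $\overline{a}^{\smallfrown}\langle c\rangle\in\varphi_{\overline{a}^{\smallfrown}\langle c\rangle}(M)$ by Definition \ref{k2}(3)(a), the containment $\varphi_{\overline{a}^{\smallfrown}\langle c\rangle}(M)\subseteq G_b$ yields $\overline{a}^{\smallfrown}\langle c\rangle\in G_b$; conversely, applying the minimality clause Definition \ref{k2}(3)(b) to $\varphi_{\overline{a}^{\smallfrown}\langle c\rangle}$ shows that $\overline{a}^{\smallfrown}\langle c\rangle\in G_b$ forces $\varphi_{\overline{a}^{\smallfrown}\langle c\rangle}(M)\subseteq G_b$. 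Thus the affine membership is governed entirely by the fixed set $G_b$.

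Now I would carry out the main step. Put $\overline{a}_l:=\overline{a}^{\smallfrown}\langle c_l\rangle$ for $l<3$; by the reformulation each $\overline{a}_l\in G_b$. Because $\bff$ is an additive frame, Definition \ref{k2}(7) guarantees that $G_b=\varphi_{\overline{a}^{\smallfrown}\langle b\rangle}(M)$ is a subgroup of ${}^{\lg(\overline{a})+1}M$, hence closed under the operation $\overline{x}-\overline{y}+\overline{z}$. Computing coordinatewise as in Notation \ref{k8}, the first $\lg(\overline{a})$ coordinates of $\overline{a}_0-\overline{a}_1+\overline{a}_2$ give $\overline{a}-\overline{a}+\overline{a}=\overline{a}$ and the last coordinate gives $c_0-c_1+c_2=c$, so that
\[
\overline{a}^{\smallfrown}\langle c\rangle=\overline{a}_0-\overline{a}_1+\overline{a}_2\in G_b.
\]
By the reformulation (equivalently, by minimality) this already gives $\varphi_{\overline{a}^{\smallfrown}\langle c\rangle}(M)\subseteq G_b$.

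It remains to check $c\in\rm{cl}(\overline{a},M)$, after which the reformulation delivers $c\in\rm{afn}(b,\overline{a})$ and we are done. Passing to fibres over $\overline{a}$ in the inclusion $\varphi_{\overline{a}^{\smallfrown}\langle c\rangle}(M)\subseteq G_b$, monotonicity gives
\[
\varphi_{\overline{a}^{\smallfrown}\langle c\rangle}(M,\overline{a})\subseteq\varphi_{\overline{a}^{\smallfrown}\langle b\rangle}(M,\overline{a}).
\]
The hypothesis $b\in\rm{cl}(\overline{a},M)$ means, by Definition \ref{n8}, precisely that the right-hand fibre has cardinality $<\kappa$; hence so does the left-hand fibre, which is exactly the assertion $c\in\rm{cl}(\overline{a},M)$.

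I do not expect a serious obstacle once the reformulation is in place: the affineness of $\rm{afn}(b,\overline{a})$ is simply inherited from the fact that $G_b$ is a subgroup, and the cardinality bound needed for membership in $\rm{cl}$ is inherited from $b$ by monotonicity of fibres under inclusion. The one point that genuinely demands care is the direction of the minimality bookkeeping: one must apply the minimality clause to the \emph{new} sequence $\overline{a}^{\smallfrown}\langle c\rangle$, so that its membership in the \emph{fixed} subgroup $G_b$ controls $\varphi_{\overline{a}^{\smallfrown}\langle c\rangle}$, rather than attempting the reverse comparison, which does not close up under the affine operation.
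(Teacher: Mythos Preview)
Your proof is correct and follows essentially the same route as the paper: both arguments use that $G_b=\varphi_{\overline{a}^{\smallfrown}\langle b\rangle}(M)$ is a subgroup (hence affine-closed) to get $\overline{a}^{\smallfrown}\langle c\rangle\in G_b$, then invoke minimality. Your fibre argument for $c\in\rm{cl}(\overline{a},M)$ is in fact more explicit than the paper's bare citation of Lemma~\ref{n11}, but the underlying idea is the same.
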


\begin{proof}
First, recall that 	``$\rm{afn}(b, \overline{a}) \subseteq \rm{cl}(\overline{a}, M)$'' holds by the definition. For the second phrase, we have
 to show that $\rm{afn}(b, \overline{a}) $ is closed under $x - y + z$. To see this, let $c_{i}  \in \rm{afn}(b, \overline{a})$ for $i = 1, 2, 3$ and  set $c := c_{1} - c_{2} + c_{3}.$ Since the frame
 is additive, 	 $\varphi_{\overline{a}^{\smallfrown}\langle b \rangle}(M)$ is a subgroup, see Definition \ref{k2}(7),
 so  $\varphi_{\overline{a}^{\smallfrown}\langle b \rangle}(M)$  is affine-closed.
 Consequently,  $$\overline{a}^{\smallfrown}\langle c \rangle = \overline{a}^{\smallfrown} \langle c_{1} \rangle - \overline{a}^{\smallfrown} \langle c_{2} \rangle + \overline{a}^{\smallfrown} \langle c_{3} \rangle \in \varphi_{\overline{a}^{\smallfrown}\langle b \rangle}(M).$$
According to the  minimality, $\varphi_{\overa^{\smallfrown}\langle b \rangle} \geq \varphi_{\overa^{\smallfrown} \langle c \rangle}$. Thanks to Lemma \ref{n11}, $c \in \rm{cl}(\overline{a}, M)$. Hence
$c\in\rm{afn}(b, \overline{a})$,
and we are done.
\end{proof}


\begin{lemma}\label{n17}
The following holds:
 	\begin{enumerate}
 	
 	\item $\rm{grp}(b, \overa)$ is a subgroup of $M.$

\item $\rm{afn}(b, \overline{a}) = \{  b + d: d \in \rm{grp}(b, \overline{a}) \}.$	\end{enumerate}
\end{lemma}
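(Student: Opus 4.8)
The plan is to recognize both parts as the standard fact that a nonempty affine subset of an abelian group is a coset of its difference subgroup. Write $A:=\rm{afn}(b,\overline{a})$ for brevity. By Lemma \ref{n14}, $A$ is affine, i.e.\ closed under the operation $x-y+z$, and $A\subseteq\rm{cl}(\overline{a},M)$. Moreover $b\in A$: indeed $b\in\rm{cl}(\overline{a},M)$ by the standing hypothesis of Lemma \ref{n14}, and the defining inequality $\varphi_{\overline{a}^{\smallfrown}\langle b\rangle}\leq\varphi_{\overline{a}^{\smallfrown}\langle b\rangle}$ holds by reflexivity of $\leq$. By Definition \ref{n81}, $\rm{grp}(b,\overline{a})=\{c_{1}-c_{2}:c_{1},c_{2}\in A\}=A-A$, so the two assertions become: (1) $A-A$ is a subgroup of $M$, and (2) $A=b+(A-A)$.

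For (1), I would verify the subgroup axioms directly, using only that $M$ is abelian (since $\bff$ is an additive frame) together with affineness of $A$. The identity lies in $A-A$ because $0=b-b$ with $b\in A$. The set $A-A$ is closed under negation since $-(c_{1}-c_{2})=c_{2}-c_{1}$. For closure under addition, given $c_{1}-c_{2}$ and $c_{3}-c_{4}$ with all $c_{i}\in A$, affineness yields $c':=c_{1}-c_{2}+c_{3}\in A$, whence $(c_{1}-c_{2})+(c_{3}-c_{4})=c'-c_{4}\in A-A$. Thus $A-A=\rm{grp}(b,\overline{a})$ is a subgroup of $M$.

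For (2), the inclusion $A\subseteq b+(A-A)$ is immediate: any $c\in A$ satisfies $c=b+(c-b)$ with $c-b\in A-A$, because $b,c\in A$. For the reverse inclusion, take $d=c_{1}-c_{2}\in A-A$ with $c_{1},c_{2}\in A$; then, by commutativity, $b+d=c_{1}-c_{2}+b$, and since $c_{1},c_{2},b\in A$ and $A$ is affine, the element $c_{1}-c_{2}+b$ lies in $A$. Hence $b+(A-A)\subseteq A$, which gives (2).

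There is no genuine obstacle in this argument; the single point that must not be overlooked is that $b\in A$, which rests on the hypothesis $b\in\rm{cl}(\overline{a},M)$ and simultaneously guarantees that $0\in\rm{grp}(b,\overline{a})$ and that $b$ is an admissible base point for the coset description. Everything else is the routine verification, in the abelian group $M$, that the difference set of an affine subset is a subgroup having that affine subset as a coset.
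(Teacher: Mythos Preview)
Your proof is correct and essentially identical to the paper's. Both arguments rest on Lemma~\ref{n14} (that $\rm{afn}(b,\overline a)$ is affine) and the membership $b\in\rm{afn}(b,\overline a)$; the paper verifies closure under subtraction via the same regrouping $(b_{1,1}-b_{1,2})-(b_{2,1}-b_{2,2})=(b_{1,1}-b_{1,2}+b_{2,2})-b_{2,1}$ that you use for closure under addition, and the proof of (2) is the same coset computation on both sides.
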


\begin{proof}
For clause (1), let $c_{i}  \in \rm{grp}(b, \overline{a}), $ where $i = 1, 2$. Following its definition, there are  some $b_{i, 1}, b_{i, 2} \in \rm{afn}(b, \overline{a})$ such that $ c_{i} = b_{i, 1} - b_{i, 2}$. So,

   $$\begin{array}{ll}
c_{1} - c_{2} &= (b_{1, 1} - b_{1, 2}) - (b_{2, 1} - b_{2, 2})\\
&=  (b_{1, 1}- b_{1, 2} + b_{2, 2}) - b_{2, 1}.
\end{array}$$
According to Lemma \ref{n14}, we know $b^*_{2, 1}=b_{1, 1}- b_{1, 2} + b_{2, 2} \in \rm{afn}(b, \overline{a})$,
and hence by definition of $\rm{grp}(b, \overa)$,
\[
c_{1} - c_{2}= b^*_{2, 1} - b_{2, 1} \in \rm{grp}(b, \overa).
\]

To prove clause (2), let $c \in \rm{afn}(b, \overline{a})$. As clearly $b \in \rm{afn}(b, \overline{a}),$ by clause (1), $-b + c \in \rm{grp}(b, \overa),$ hence
 $c = b + (-b+c) \in  \big\{  b + d: d \in \rm{grp}(b, \overline{a}) \big\}.$
Conversely, suppose  $d \in \rm{grp}(b, \overline{a})$. Due to its definition,
there are for some $c_1, c_2 \in \rm{afn}(b, \overline{a})$ so that $d= c_1 - c_2$. Consequently, in view of  Lemma \ref{n14}, we see
$$b+d = b -c_2+ c_1 \in \rm{afn}(b, \overline{a}).$$
The equality follows.
\end{proof}

\begin{definition}\label{n5}
Let $\bff=(M, \mathscr{L},  \lambda, \kappa,  \theta, \Omega)$ be an additive frame such that
$\kappa \in \Omega$ and $\mathscr{L}$ is closed under $\exists^\kappa x$.
We say $\bff$ is very nice, if it satisfies the following extra properties:
\begin{enumerate}

	\item
$\cL_{\bff} = \cL_{\infty, \theta}^{\rm{pe}}(\tau_{M})$,  or just $\cL_{\bff}$ is closed under $\exists \overline{x}_u,$ up to equivalence, where $|u| < \theta$.
	
	\item For every $X \subseteq {}^{\varp}M$, there is a formula $\varphi_X(\overline{x})$, such that $X \subseteq \varphi_X(M)$,
and if $\psi(\overline{x})$ is such that $X \subseteq \psi(M),$ then $\varphi_X(M) \subseteq \psi(M)$.
	
	\end{enumerate}
\end{definition}


\begin{discussion}
	By a repetition of the argument presented in  the proof of Theorem \ref{L7}
we know Definition \ref{n5}(2) holds, when $M$ is an $R$-module and $\bff$ is defined as in  Theorem \ref{L7}.
\end{discussion}

\begin{proposition}\label{n26}
  Suppose the additive frame $\bff=(M, \mathscr{L},  \lambda, \kappa,  \theta, \Omega)$ is very nice. The following conditions hold.
 	\begin{enumerate}
 \item Assume $b \in M$ and $\overa \in {}^{\varp}M$. Then there is some formula $\varphi(\overline{x}, y)$ with $\len(\overline{x})=\varp$ such that
 $\varphi(\overline{a}, M) = \rm{grp}(b, \overline{a}).$

 	\item If $c \in \rm{grp}(b, \overline{a})$ and $b \in \rm{cl}(\overline{a}, M)$, then $c \in \rm{cl}(\overline{a}, M).$ Moreover, $\rm{cl}(\overline{a}, M)$ is a subgroup of $M$.

   	\item Let $b \in \rm{cl}(\overline{a}, M)$. Then $\varphi_{\rm{grp}(b, \overline{a})}(M)$ is of cardinality $< \kappa.$

   	\end{enumerate}
\end{proposition}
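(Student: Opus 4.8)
The plan is to prove the three clauses in order, using the machinery of $\rm{grp}$, $\rm{afn}$ and $\rm{cl}$ developed in Lemmas \ref{n11}, \ref{n14} and \ref{n17}, together with the "very nice" hypothesis (Definition \ref{n5}). For clause (1), I would exploit the fact that the frame is very nice. By Lemma \ref{n17}(1), $\rm{grp}(b,\overline{a})$ is a subgroup of $M$, hence in particular a subset $X \subseteq {}^{1}M$. The key idea is to apply Definition \ref{n5}(2) to obtain a minimal-defining formula $\varphi_X(y)$ with $X \subseteq \varphi_X(M)$; but I must produce a formula in the free variables $(\overline{x}, y)$ that \emph{defines $\rm{grp}(b,\overline{a})$ with $\overline{a}$ as parameters}. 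The natural candidate is to write $\rm{grp}(b,\overline{a})$ explicitly using $\rm{afn}(b,\overline{a})$ via Lemma \ref{n17}(2): since $\rm{afn}(b,\overline{a}) = \{c : \varphi_{\overline{a}^{\smallfrown}\langle b\rangle} \leq \varphi_{\overline{a}^{\smallfrown}\langle c\rangle}\} \cap \rm{cl}(\overline{a})$, I would encode the condition "$c_1, c_2 \in \rm{afn}(b,\overline{a})$ and $y = c_1 - c_2$" using existential quantification over the auxiliary variables $c_1, c_2$, relying on the closure of $\cL$ under $\exists$ (Definition \ref{n5}(1)) and the availability of $\varphi_{\overline{a}^{\smallfrown}\langle b\rangle}$ in $\cL$. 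The subtlety — and what I expect to be the main obstacle — is that the defining condition for $\rm{afn}$ involves the relation $\leq$ between formulas $\varphi_{\overline{a}^{\smallfrown}\langle b\rangle}$ and $\varphi_{\overline{a}^{\smallfrown}\langle c\rangle}$, which is an inclusion of definable sets rather than something manifestly expressible by a single formula; I would need to argue that this relation is captured by membership $\overline{a}^{\smallfrown}\langle c\rangle \in \varphi_{\overline{a}^{\smallfrown}\langle b\rangle}(M)$, using the minimality property of $\varphi_{\overline{a}^{\smallfrown}\langle b\rangle}$ from Definition \ref{k2}(3)(b).

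For clause (2), I would argue as follows. Let $c \in \rm{grp}(b,\overline{a})$ with $b \in \rm{cl}(\overline{a})$. By Lemma \ref{n17}(2), every element of $\rm{afn}(b,\overline{a})$ has the form $b + d$ with $d \in \rm{grp}(b,\overline{a})$, and by definition $c = c_1 - c_2$ for some $c_1, c_2 \in \rm{afn}(b,\overline{a})$. Since $\rm{afn}(b,\overline{a}) \subseteq \rm{cl}(\overline{a})$ by Lemma \ref{n14}, both $c_1, c_2 \in \rm{cl}(\overline{a})$. To conclude $c = c_1 - c_2 \in \rm{cl}(\overline{a})$, I would use clause (c) of Lemma \ref{n11} (closure is transitive: $\rm{cl}(\overline{a}^{\smallfrown}\langle c_1\rangle) \subseteq \rm{cl}(\overline{a})$) together with the affineness of $\rm{cl}(\overline{a})$; indeed the final sentence asserting "$\rm{cl}(\overline{a})$ is a subgroup of $M$" should follow by combining that $\rm{cl}(\overline{a})$ is affine (which I would establish from Lemma \ref{n14}, noting $a_i \in \rm{cl}(\overline{a})$ so $0 = a_0 - a_0 + 0$-type elements are available, and that affine plus containing a subgroup-translate structure collapses to a subgroup when $0 \in \rm{cl}(\overline{a})$). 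The cleanest route is: $\rm{cl}(\overline{a})$ contains the coordinates of $\overline{a}$ by Lemma \ref{n11}(b), is closed under the affine operation, and contains $0$ (taking $b \in \rm{cl}$, apply closure), whence it is a subgroup.

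For clause (3), I would combine clause (1) with Lemma \ref{n11}(a). By clause (1) there is $\varphi(\overline{x}, y)$ with $\varphi(\overline{a}, M) = \rm{grp}(b,\overline{a})$, and by clause (2) we have $\rm{grp}(b,\overline{a}) \subseteq \rm{cl}(\overline{a})$. Since Lemma \ref{n11}(a) gives $|\rm{cl}(\overline{a})| < \kappa$, we immediately get $|\rm{grp}(b,\overline{a})| < \kappa$; it then remains to relate the size of $\varphi_{\rm{grp}(b,\overline{a})}(M)$ to this bound. The point is that $\varphi_{\rm{grp}(b,\overline{a})}(M)$ is the \emph{minimal} definable set containing $\rm{grp}(b,\overline{a})$, and I would argue via the structure of $\rm{cl}$ that this minimal set cannot grow beyond size $\kappa$ — concretely, if it had size $\geq \kappa$ one could invoke Corollary \ref{k14}(1) to find equivalent-but-distinct elements, contradicting that the generators of $\rm{grp}(b,\overline{a})$ lie in $\rm{cl}(\overline{a})$ and thus have small closure. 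The main obstacle across all three clauses is the bookkeeping needed to pass between the combinatorial/algebraic description of $\rm{grp}$ and $\rm{afn}$ on the one hand, and the logical description via minimal formulas $\varphi_{(-)}$ on the other; once clause (1) is secured, clauses (2) and (3) follow by relatively direct set-theoretic counting using the already-established cardinality bound $|\rm{cl}(\overline{a})| < \kappa$.
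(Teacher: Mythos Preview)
Your clause (1) is on the right track and matches the paper: the formula
\[
\varphi(\overline{x}, y) = (\exists y_1, y_2)\big[\varphi_{\overline{a}^{\smallfrown}\langle b\rangle}(\overline{x}, y_1) \wedge \varphi_{\overline{a}^{\smallfrown}\langle b\rangle}(\overline{x}, y_2) \wedge y = y_2 - y_1\big]
\]
is exactly what is used, and your observation that the relation $\varphi_{\overline{a}^{\smallfrown}\langle b\rangle} \leq \varphi_{\overline{a}^{\smallfrown}\langle c\rangle}$ reduces, via minimality, to the membership condition $\overline{a}^{\smallfrown}\langle c\rangle \in \varphi_{\overline{a}^{\smallfrown}\langle b\rangle}(M)$ is the key point.

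The genuine gap is in clause (2). You propose to deduce that $\rm{cl}(\overline{a})$ is a subgroup by first showing it is affine, and you cite Lemma~\ref{n14} for this. But Lemma~\ref{n14} says $\rm{afn}(b,\overline{a})$ is affine, not $\rm{cl}(\overline{a})$; these are different objects and there is no way to pass from affineness of the various $\rm{afn}(b,\overline{a})$ to affineness of their union $\rm{cl}(\overline{a})$. Neither does Lemma~\ref{n11}(c) help, as transitivity of closure does not by itself yield closure under subtraction. The paper avoids this entirely and argues directly with the same formula trick as in (1), but now with two different base points: given $b_1, b_2 \in \rm{cl}(\overline{a})$, set
\[
\varphi(\overline{x}, y) = (\exists y_1, y_2)\big[\varphi_{\overline{a}^{\smallfrown}\langle b_1\rangle}(\overline{x}, y_1) \wedge \varphi_{\overline{a}^{\smallfrown}\langle b_2\rangle}(\overline{x}, y_2) \wedge y = y_2 - y_1\big].
\]
Then $\varphi(\overline{a}, M)$ is a set of differences drawn from two sets each of cardinality $<\kappa$, hence itself has cardinality $<\kappa$; since $b_2 - b_1$ belongs to it, minimality of $\varphi_{\overline{a}^{\smallfrown}\langle b_2 - b_1\rangle}$ forces $|\varphi_{\overline{a}^{\smallfrown}\langle b_2 - b_1\rangle}(\overline{a}, M)| < \kappa$, i.e.\ $b_2 - b_1 \in \rm{cl}(\overline{a})$.

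Your clause (3) also wobbles: Corollary~\ref{k14}(1) concerns the threshold $\lambda$, not $\kappa$, so the contradiction you sketch does not fire. The paper simply notes that (3) follows by the same pattern as (2): once you have an explicit formula in $\cL$ whose $\overline{a}$-section has size $<\kappa$ and contains $\rm{grp}(b,\overline{a})$ (which is exactly the formula from clause (1), since $b\in\rm{cl}(\overline{a})$ makes $|\varphi_{\overline{a}^{\smallfrown}\langle b\rangle}(\overline{a}, M)|<\kappa$), minimality of $\varphi_{\rm{grp}(b,\overline{a})}$ gives the bound.
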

\begin{proof}
(1): Define the formula $\varphi$ as
$$\varphi(\overline{x}, y) = (\exists {y}^{1}, y^{2})\bigg[ \varphi_{\overline{a}^{\smallfrown} \langle b \rangle}(\overline{x}, y^1) \wedge \varphi_{\overline{a} ^{\smallfrown} \langle b \rangle}(\overline{x}, y^2) \wedge y = y_{2} - y_{1}\bigg].$$
We show that $\varphi$ is as required. By Definition \ref{n5}, $\varphi(\overline{x}, y) \in \mathscr{L}.$
First, suppose that $b \in \rm{grp}(b, \overline{a})$, and let
$b_{1}, b_{2} \in \rm{afn}(b, \overline{a})$ be such that  $b = b_{2} - b_{1}$. Then $b_{1}, b_{2}$ witness $M \models \varphi[\overline{a}, b].$ Hence $$(\ast)_1\quad\quad\qquad\rm{grp}(b, \overline{a}) \subseteq \varphi(\overline{a}, M).$$
In order to prove the reverse inclusion, suppose that
$b \in \varphi(\overline{a}, M).$ This implies that $M \models \varphi[\overa, b]$. Take
$b_{1}, b_{2}$ be witness it, i.e., $b = b_2 - b_1$ and
  $$M \models \varphi_{\overline{a}^{\smallfrown} \langle b \rangle}[\overline{a}, b_1] \wedge \varphi_{\overline{a} ^{\smallfrown} \langle b \rangle}[\overline{a}, b_2].$$
On the other hand, for $l=1, 2$ we have
\[
M \models  \varphi_{\overline{a}^{\smallfrown} \langle b \rangle}[\overline{a}, b_l] \Rightarrow  \varphi_{\overline{a}^{\smallfrown} \langle b \rangle} \geq  \varphi_{\overline{a}^{\smallfrown} \langle b_l \rangle},
\]
 hence
$b_{l} \in \rm{afn}(b, \overline{a})$. Consequently, $b=b_2 -b_1 \in \rm{grp}(b, \overline{a}).$
As $b$ is arbitrary, we conclude that
$$(\ast)_2 \quad\qquad\quad \varphi(\overline{a}, M) \subseteq \rm{afn}(b, \overline{a}).$$
By $(\ast)_1$ and $(\ast)_2$, we have  $\varphi(M, \overline{a}) = \rm{afn}(b, \overline{a})$, and we are done.

(2): In the light of  Lemma \ref{n14}, it suffices to show that $\rm{cl}(\overline{a}, M)$ is a subgroup of $M$. To this end, let
$b_{1}, b_{2} \in \rm{cl}(\overline{a}, M),  \, \varp = \rm{lg}(\overline{a})$ and
let $\varphi(\overline{x}, y)$ be as in clause (1).
It is easily seen that:
    \begin{enumerate}
        \item[$\bullet_{1}$] $M \models \varphi[\overline{a}, b_2 - b_1],$

        \item[$\bullet_{2}$] $\varphi(\overline{a}, M)$ has cardinality $< \kappa.$
        \end{enumerate}
Thanks to the minimality condition,
$\varphi_{\overa ^{\smallfrown} \langle b_2 -b_1  \rangle}(M) \subseteq \varphi(\overline{a}, M).
$
In other words, $|\varphi_{\overa ^{\smallfrown} \langle b_2 -b_1  \rangle}(M)| < \kappa$, which implies that
     $b_{2} - b_{1} \in \rm{cl}(\overline{a}, M).$ We have proved
     $$b_{1}, b_{2} \in \rm{cl}(\overline{a}, M) \Rightarrow b_{2} - b_{1} \in \rm{cl}(\overline{a}, M).$$ Therefore,  $\rm{cl}(\rm{\overline{a}}, M)$ is a subgroup of $M.$

    (3): The proof is similar to the proof of clause (2).
\end{proof}

\section{On the absolutely co-Hopfian property}

This section is devoted to the proof of Theorem \ref{1.2} from the introduction.
We start by  recalling the   definition of (co)-Hopfian modules.

\begin{definition}
Let $M$ be an $R$-module.
	\begin{itemize}
		\item[(i)]
		$M$ is called \emph{Hopfian} if its surjective $R$-endomorphisms are automorphisms.
		\item[(ii)] $M$  is called
		\emph{co-Hopfian} if its injective $R$-endomorphisms are automorphisms.
	\end{itemize}
\end{definition}

This can be extended to:
\begin{definition}
	Let $M$ be a $\tau$-model.
	\begin{itemize}
		\item[(i)]
		$M$ is called \emph{Hopfian} if its surjective  $\tau$-morphisms are $\tau$-automorphisms.
		\item[(ii)] $M$  is called
		\emph{co-Hopfian} if its injective $\tau$-morphisms are $\tau$-automorphisms.
	\end{itemize}
\end{definition}

For the convenience of the reader, we present
the definition of potentially isomorphic, and
discuss some basic facts about them which are used in the paper, and only sketch  the proofs in most instances.
\begin{definition}\label{al}
Let  $M, N$ be two structures of our vocabulary.
Recall that  $M$ and $ N$  are called \it{potentially isomorphic} provided they are isomorphic in some forcing extension.\end{definition}

Recall that a group	$G$ is called   absolutely co-Hopfian (resp. Hopfian) if it is co-Hopfian  (resp. Hopfian) in any
further	generic extension of the universe.
\begin{discussion}\label{nad}
Suppose $M$ and $ N$  are  potentially isomorphic.
According to   \cite{marker} and \cite{Nad}
this is holds iff
 $[(M \models \varphi) \Longleftrightarrow  (N \models \varphi)] $ for every
 $\cL_{\infty, \aleph_0}$-sentence $\varphi.$  We denote this property by
$M \equiv_{\cL_{\infty, \aleph_0}} N.$
\end{discussion}

  The following is a simple variant of Discussion \ref{nad}. We state it in our context.
\begin{lemma}
\label{lem2}
Assume $M$ and $N$ are two $\tau$-structures.
\begin{enumerate}
\item Suppose for every sentence $\varphi \in \cL_{\infty, \aleph_0}(\tau)$, we have
$[(M \models \varphi)  \Longrightarrow (N \models \varphi)]$.
 Then there is an embedding of $M$ into $N$ in
$V[G_{\mathbb{P}}]$, where $\mathbb{P}$ collapses $|M|+|N|$ into $\aleph_0$.

\item In clause (1), it suffices to consider sentences $\varphi$ in the closure of base formulas under arbitrary conjunctions and $\exists x.$
\end{enumerate}
\end{lemma}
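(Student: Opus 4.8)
The plan is to establish clause (2), from which clause (1) is immediate because the indicated fragment is contained in $\cL_{\infty,\aleph_0}(\tau)$. Write $\mathbb{P}=\mathrm{Col}(\aleph_0,|M|+|N|)$, so that in $V[G_{\mathbb{P}}]$ the structure $M$ becomes countable, and fix there an enumeration $M=\{a_n:n<\omega\}$. The embedding will be produced by a one-sided back-and-forth (a \emph{forth} system), in the spirit of the Karp/Nadel analysis underlying Discussion \ref{nad}, but arranged so that the system is manufactured in $V$ and merely \emph{used} in $V[G_{\mathbb{P}}]$; this is what avoids all difficulties with new infinitary formulas appearing in the extension.

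Working in $V$, let $\Lambda$ be the fragment of $\cL_{\infty,\aleph_0}(\tau)$ obtained by closing the basic formulas (atomic and negated atomic) under arbitrary conjunction and $\exists y$, and set
$$ I:=\big\{(\bar a,\bar b)\in{}^{<\omega}M\times{}^{<\omega}N:\ \lg(\bar a)=\lg(\bar b)\text{ and } \forall\,\psi(\bar x)\in\Lambda\ (M\models\psi[\bar a]\Rightarrow N\models\psi[\bar b])\big\}. $$
Although $\Lambda$ is a proper class, $I$ is a subset of the set ${}^{<\omega}M\times{}^{<\omega}N$, hence a set by Separation. Applied to sentences, the hypothesis of clause (2) gives $(\langle\rangle,\langle\rangle)\in I$, so $I\neq\emptyset$. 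For the forth property, suppose $(\bar a,\bar b)\in I$ and $a\in M$, and assume toward a contradiction that no $b\in N$ has $(\bar a a,\bar b b)\in I$; then for each $b\in N$ I may choose $\psi_b(\bar x,y)\in\Lambda$ with $M\models\psi_b[\bar a,a]$ and $N\not\models\psi_b[\bar b,b]$. Since the index set $N$ is a set, the formula $\theta(\bar x):=\exists y\,\bigwedge_{b\in N}\psi_b(\bar x,y)$ again lies in $\Lambda$ (this is exactly where closure under arbitrary conjunction and $\exists$ is needed, and indexing by $N$ rather than by all formulas keeps the conjunction set-sized). Witnessing $y$ by $a$ shows $M\models\theta[\bar a]$, so $(\bar a,\bar b)\in I$ yields $N\models\theta[\bar b]$; any witness $b^{*}\in N$ then gives $N\models\psi_{b^{*}}[\bar b,b^{*}]$, contradicting the choice of $\psi_{b^{*}}$. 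Thus $I$ is a forth system in $V$.

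I now pass to $V[G_{\mathbb{P}}]$ keeping the \emph{same} set $I\in V$. Forcing adds no elements to the ground-model structures $M,N$, so membership in $I$ is not recomputed and is absolute, and the forth witnesses found above already belong to $V$; hence $I$ remains nonempty and forth in $V[G_{\mathbb{P}}]$. Using the enumeration of $M$, I build an increasing $\omega$-chain $(\langle\rangle,\langle\rangle)=(\bar a^{0},\bar b^{0})\subseteq(\bar a^{1},\bar b^{1})\subseteq\cdots$ inside $I$, where at stage $n$ the forth property places $a_n$ in the domain; the union defines $f:M\to N$ with $(\bar a,f(\bar a))\in I$ for every finite tuple $\bar a$ from $M$. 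Restricting the defining condition of $I$ to basic formulas shows that $f$ preserves and reflects all atomic formulas: negated equalities give injectivity; atomic and negated-atomic relational formulas give that relations are preserved and reflected; and atomic equations $F(\bar x)=x'$ together with constant symbols give that $f$ commutes with all function symbols (every relevant element of $M$ having entered the domain). Therefore $f$ is an embedding of $M$ into $N$ in $V[G_{\mathbb{P}}]$, as required.

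I expect the main obstacle to be the forcing-stability bookkeeping rather than the combinatorics: one must form $I$ once in $V$, so that the proper-class fragment $\Lambda$ is never quantified over in $V[G_{\mathbb{P}}]$ (where new $\cL_{\infty,\aleph_0}$-formulas exist), verify that $I$ is genuinely a set, and inherit from $V$ both the forth witnesses and the absoluteness of the relation $(\bar a,\bar b)\in I$. The set-indexing of the critical conjunction by $N$ is the device that makes the forth step legitimate, and once it is in place the rest is the standard countable construction.
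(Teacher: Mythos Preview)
Your proof is correct and follows essentially the same forth-construction as the paper: inductively choose $b_n$ so that every $\Lambda$-formula satisfied by $\langle a_0,\ldots,a_n\rangle$ in $M$ is satisfied by $\langle b_0,\ldots,b_n\rangle$ in $N$, the inductive step going through the big conjunction of such formulas followed by an existential quantifier. You are more careful than the paper on two points it glosses over: you index the crucial conjunction by $N$ rather than by all satisfied formulas (so it is genuinely set-sized and hence a formula), and you build the forth system $I$ once in $V$ and carry it into $V[G_{\mathbb{P}}]$ by absoluteness, so that the new $\cL_{\infty,\aleph_0}$-formulas appearing in the extension never enter the argument.
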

\begin{proof}
We give a proof for completeness. Let $M=\{a_n: n<\omega  \}$ be an enumeration of $M$ in $V[G_{\mathbb{P}}]$. By induction on $n$ we define a sequence
$\langle b_n: n<\omega \rangle$ of members of $N$ such that for each formula $\varphi(x_0, \cdots, x_{n})$ from $\cL_{\infty, \aleph_0}$,
\[
(*)_n \quad\quad\quad \big(M \models \varphi[a_0, \cdots, a_{n}]\big) \Rightarrow \big( N \models \varphi[b_0, \cdots, b_{n}]\big).
\]
Let
$
\Phi_0(x_0) = \bigwedge \{ \varphi(x_0): M \models \varphi[a_0]  \} \in \cL_{\infty, \aleph_0}.
$
Then $M \models \Phi_0(a_0)$, and by our assumption, there exists some $b_0 \in N$ such that $N \models \Phi_0(b_0)$.
Now suppose that $n<\omega$ and we have defined $b_0, \cdots, b_n.$ We are going to define $b_{n+1}$.
Let
\[
\Phi_{n+1}(x_0, \ldots, x_{n+1}) = \bigwedge \big\{ \varphi(x_0, \ldots, x_{n+1}): M \models \varphi[a_0, \cdots, a_{n+1}]  \big\}.
\]Clearly, $\Phi_{n+1}(x_0, \cdots, x_{n+1})  \in \cL_{\infty, \aleph_0} $.
Also, $$M \models \exists x_{n+1} \Phi_{n+1}(a_0, \cdots, a_n, x_{n+1}).$$ According to the induction hypothesis $(*)_n$,  we have
 $N \models \varphi[b_0, \cdots, b_{n+1}]$  for some $b_{n+1} \in N$.
This completes the construction of the sequence $\langle b_n: n<\omega \rangle$.  The assignment
$a_n\mapsto b_n$ defines a map $f: M \to N$ which is an embedding
of $M$ into $N$.
\end{proof}

\begin{fact}\label{ef}	
\begin{enumerate}  \item
Let $\lambda \geq \kappa_{\rm{beau}}$ and let $\langle G_{\alpha}: \alpha < \lambda \rangle$ be a sequence of $\tau$-models with $|\tau|< \kappa_{\rm{beau}}(\tau)$. Then   in some forcing extension $\bfV^{\bbP}, G_{\alpha}$ is embeddable into $G_{\beta},$  for some $\alpha < \beta < \lambda.$ Here, $\mathbb{P}$ collapses $| G_{\alpha}|+|G_{\beta}|$ into $\aleph_0$. Moreover, if $x_{\gamma} \in G_{\gamma}$ for $\gamma < \lambda$ then for some $\alpha < \beta  < \lambda,$ in some $\bfV^{\bbP}$ there is an embedding of $G_{\alpha}$ into $G_{\beta}$ mapping $x_{\alpha}$ to $x_{\beta}$\footnote{This explains why \cite{Fuc73} gets only indecomposable abelian groups (not endo-rigid).}.

\item	Suppose for $R$-modules $M$ and $N$ we have
 $[(M \models \varphi)  \Longrightarrow (N \models \varphi) ]$,  where   $\varphi \in \cL_{\infty, \theta}^{\rm{ce}}(\tau)$.
    Then there is an embedding of $M$ into $N$ in
$V[G_{\mathbb{P}}]$, where $\mathbb{P}$ collapses $|M|+|N|$ into $\aleph_0$.
\item Moreover, we can strengthen
the conclusion of part (2) to the following:
 	\begin{itemize}
 	\item[$(\ast)$] there is a $\mathbb{P}$-name $\pi$ satisfying:
 	\begin{itemize}
 		\item[$(\ast)_1$]
 		If $\overa\in(  {}^{<\theta }M)\cap V$
 		then $\pi(\overa)\in  ({}^{<\theta } N)\cap V$,
 		\item[$(\ast)_2$]$\Vdash_\mathbb{P}\pi$ maps
 		$\overa\in(  {}^{<\theta }M)\cap V$ onto  $\{\overb\in  {}^{<\theta }M:\rang(\overb) \subseteq \rang(\pi)\}.$
 	\end{itemize}
 \end{itemize}
\end{enumerate}
\end{fact}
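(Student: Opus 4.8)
The plan is to treat the clauses in the order (2), (1), (3), since clause (1) invokes the embedding criterion and clause (3) refines the construction behind (2).

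\emph{Clause (2).} I would repeat the forth construction of Lemma \ref{lem2} verbatim, replacing $\cL_{\infty, \aleph_0}(\tau)$ by $\cL_{\infty, \theta}^{\rm{ce}}(\tau)$. After forcing with $\mathbb{P}=\rm{Col}(\aleph_0, |M|+|N|)$ both $M$ and $N$ are countable, so I enumerate $M=\{a_n:n<\omega\}$ and build $b_n \in N$ preserving the invariant $M \models \varphi(a_0,\dots,a_n) \Rightarrow N \models \varphi(b_0,\dots,b_n)$ for every $\varphi \in \cL_{\infty, \theta}^{\rm{ce}}$. The base step feeds the sentence $\exists x_0\, \Phi_0(x_0)$, with $\Phi_0$ the conjunction of all $\cL^{\rm{ce}}$-formulas true of $a_0$, into the hypothesis, and the successor step applies the invariant to $\exists x_{n+1}\,\Phi_{n+1}$, which again lies in $\cL^{\rm{ce}}$ since this fragment is closed under $\bigwedge$ and $\exists$. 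The feature special to abelian groups and $R$-modules is that $\cL^{\rm{ce}}$ contains every atomic formula and every inequality $x_i \neq x_j$: preservation of atomic formulas makes $f\colon a_n \mapsto b_n$ a homomorphism, and preservation of the inequalities makes it injective, so $f$ is an embedding.

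\emph{Clause (1).} By Lemma \ref{lem2} applied to $(G_\alpha, x_\alpha)$ and $(G_\beta, x_\beta)$, expanded by a constant for the distinguished element, it suffices to locate $\alpha < \beta < \lambda$ such that every sentence in the closure of the literals under $\bigwedge$ and $\exists$ which holds in $(G_\alpha, x_\alpha)$ also holds in $(G_\beta, x_\beta)$. To produce such a pair I would encode each $(G_\alpha, x_\alpha)$ as a $\mathcal{Q}$-labeled tree $(\cT_\alpha, \Phi_\alpha)$ of height $\leq \omega$ whose level-$n$ nodes are the tuples $\langle x_\alpha, c_1, \dots, c_n\rangle$ of $G_\alpha$ and whose label $\Phi_\alpha(t)$ records the quantifier-free type of $t$; here $\mathcal{Q}$ is the set of quantifier-free $\tau$-types, ordered so that $\leq_{\mathcal{Q}}$ captures implication of positive-existential-with-literals formulas, and $|\mathcal{Q}| < \kappa_{\rm{beau}}(\tau)$ because $|\tau| < \kappa_{\rm{beau}}(\tau)$ and only finitely many variables occur. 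Applying the $\kappa_{\rm{beau}}(\tau)$-version of Fact \ref{qwo} to $\langle (\cT_\alpha, \Phi_\alpha): \alpha < \kappa_{\rm{beau}}(\tau)\rangle$ yields $\alpha<\beta$ together with a level- and order-preserving, label-increasing map $\nu\colon \cT_\alpha \to \cT_\beta$, and reading $\nu$ along a branch assembles the required sentence-implication. This is precisely the Eklof--Shelah argument, so at this step I would lean on \cite{Sh:678} and \cite[Ch.~IV]{EM02} rather than reprove the tree encoding. The version with the $x_\gamma$'s is the statement just obtained; dropping the constant gives the first assertion.

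\emph{Clause (3).} I would let $\pi$ be the name for the coordinatewise extension of the embedding $f$ from clause (2), that is $\pi(\langle a_i : i<\varepsilon\rangle) := \langle f(a_i): i<\varepsilon\rangle$. Then $(\ast)_2$ is immediate: any tuple $\overb$ all of whose entries lie in $\rang(\pi)$ is the image of the tuple of $f$-preimages of its entries. The delicate point, which I expect to be the main obstacle, is $(\ast)_1$: a ground-model sequence $\overa$ of infinite length $<\theta$ must be sent to a sequence lying in $V$, even though $f \notin V$. I would secure this by running the forth construction against a well-ordering of $N$ fixed in $V$ and choosing each witness $b_n$ to be $<_V$-least, so that $f\restriction(M\cap V)$ is forced to agree with a $V$-definable choice function on ground-model data; then $\pi$ carries each $V$-tuple to a $V$-tuple. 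Checking that this least-witness bookkeeping stays compatible with the coherence of the forth steps, and with the branch selected in clause (1), is the one place where genuine care is needed.
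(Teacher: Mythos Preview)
Your proposal is correct; the paper's own proof is merely the sentence ``For (1), see \cite{Sh:678}. Parts (2) and (3) are standard, see for example \cite{marker},'' so your forth construction for (2), your labeled-tree sketch invoking Fact \ref{qwo} for (1), and your treatment of (3) are exactly the content one would supply when unpacking those citations. One small observation on (2): your direct forth argument inside $\cL^{\rm{ce}}$ does not actually need $M,N$ to be abelian; the module hypothesis only matters if one instead reduces to the full $\cL_{\infty,\aleph_0}$ criterion of Lemma \ref{lem2} by rewriting the negation of an atomic formula $\sum_i r_i x_i = 0$ as the $\cL^{\rm{ce}}$-formula $\exists y\,(y = \sum_i r_i x_i \wedge y \neq 0)$.
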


\begin{proof}
For (1), see \cite{Sh:678}. Parts (2) and (3) are standard, see for example \cite{marker}.
	\end{proof}
Now, we are ready to prove:
\begin{theorem}\label{r2}The following assertions are valid:
	\begin{enumerate}  \item    If $M$ is an abelian group of cardinality $\geq \kappa := \kappa_{\rm{beau}}$, then $M$ is not absolutely co-Hopfian, indeed, after collapsing the size of $M$ into $\omega$, there is a one-to-one endomorphism $\varphi \in \rm{End}(M)$ which is not onto.

\item  If $M$ is an $R$-module of cardinality $\geq \kappa=\kappa_{\rm{beau}}(R)$,
then $M$ is not absolutely co-Hopfian.

\item	If $M$ is an $\tau$-model  of cardinality $\geq \kappa=\kappa_{\rm{beau}}(\tau)$,
then $M$ is not absolutely co-Hopfian.
\end{enumerate}
\end{theorem}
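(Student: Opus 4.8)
The three clauses are instances of a single argument: clauses (1) and (2) are the cases where $\tau$ is the language of abelian groups and of $R$-modules, with $\kappa=\kappa_{\rm{beau}}$ and $\kappa=\kappa_{\rm{beau}}(R)$ respectively, and clause (1) additionally records that the endomorphism is produced after collapsing $|M|$ to $\omega$. So I would prove clause (3): for a $\tau$-model $M$ with $\chi:=|M|\geq\kappa:=\kappa_{\rm{beau}}(\tau)$, I must produce, in some forcing extension, an injective $\tau$-endomorphism of $M$ that is not onto. The one input I would rely on is Fact \ref{ef}(1): for any sequence of $\tau'$-models of length $\geq\kappa_{\rm{beau}}$ (with $|\tau'|<\kappa_{\rm{beau}}(\tau')$) and any choice of a distinguished point in each, two of them admit, in a collapse extension, an embedding of the first into the second carrying the first distinguished point to the second.

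The device that turns a self-embedding into a non-surjective one is to enrich the language by a well-order. Fix in $V$ a strict well-ordering $<^{\ast}$ of the underlying set of $M$ of order type $\chi$, put $\tau':=\tau\cup\{<^{\ast}\}$, and regard $M^{\ast}:=(M,<^{\ast})$ as a $\tau'$-model. Since adjoining a single binary relation does not change the relevant number of colours, $|\tau'|<\kappa_{\rm{beau}}(\tau')=\kappa_{\rm{beau}}(\tau)=\kappa\leq\chi$. Let $\langle x_\gamma:\gamma<\chi\rangle$ be the $<^{\ast}$-increasing enumeration of $M$, and apply the ``moreover'' part of Fact \ref{ef}(1) to the constant sequence $\langle M^{\ast}:\gamma<\chi\rangle$ equipped with these distinguished points. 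This yields ordinals $\alpha<\beta<\chi$ and, in some $\bfV^{\bbP}$ with $\bbP$ collapsing $\chi$ to $\aleph_0$, a $\tau'$-embedding $\pi\colon M^{\ast}\to M^{\ast}$ with $\pi(x_\alpha)=x_\beta$.

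Now I would read off non-surjectivity from the rigidity of well-orders. Because $<^{\ast}$ belongs to $\tau'$ and $\pi$ is a $\tau'$-embedding of a linear order, $\pi$ is an order-embedding of $(M,<^{\ast})$ into itself; and $(M,<^{\ast})$ remains a well-order in $\bfV^{\bbP}$ since well-foundedness is absolute. If $\pi$ were onto it would be an order-automorphism of a well-ordered set, hence the identity; but $\alpha<\beta$ gives $x_\alpha<^{\ast}x_\beta$, so $\pi(x_\alpha)=x_\beta\neq x_\alpha$ and $\pi\neq\rm{id}$. Therefore $\pi$ is not onto. Forgetting $<^{\ast}$, the reduct of $\pi$ is an injective $\tau$-endomorphism of $M$ that is not surjective, witnessing in $\bfV^{\bbP}$ that $M$ is not co-Hopfian; as $\bfV^{\bbP}$ is a generic extension, $M$ is not absolutely co-Hopfian. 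For clause (1), $\pi\in\rm{End}(M)$ is exactly the promised one-to-one, non-onto endomorphism, obtained after collapsing $|M|$ to $\omega$.

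The only genuinely nontrivial step is the production of the self-embedding, i.e.\ Fact \ref{ef}(1); this is where the hypothesis $|M|\geq\kappa_{\rm{beau}}$ is used, through the Erd\"{o}s-type partition property of the beautiful cardinal and the $\kappa$-well-ordering of labelled trees in Fact \ref{qwo}. I would note that one could instead run the type-theoretic machinery of Sections 3--4 (the additive frame of Theorem \ref{L7}, the minimal formulas $\varphi_{\overline{a}}$, the smallness of $\rm{cl}(\overline{a},M)$ from Lemma \ref{n11}, and the large equivalence classes of Corollary \ref{k14}) to build $\pi$ by an explicit back-and-forth; but that route naturally needs $|M|\geq\lambda$ for the additive$^{+}$ largeness, whereas the well-order trick via Fact \ref{ef}(1) needs only $|M|\geq\kappa$ and so covers the full range of the theorem.
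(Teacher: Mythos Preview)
Your argument is correct and takes a genuinely different route from the paper's own proof. The paper proceeds via the additive-frame machinery of Sections~3--4: it fixes $\cL_{\bff}=\cL^{\rm{ce}}_{\infty,\aleph_0}(\tau_M)$, attaches to each tuple $\overline{a}$ the minimal formula $\varphi_{\overline{a}}$, and splits into two cases. In Case~1 there exist $\overline{a},\overline{b}$ with $\varphi_{\overline{a}}(M)\supsetneq\varphi_{\overline{b}}(M)$, and Fact~\ref{ef}(2) yields an embedding $\pi$ of $(M,\overline{a})$ into $(M,\overline{b})$; surjectivity of $\pi$ would force $\varphi_{\overline{a}}\leq\varphi_{\overline{b}}$, a contradiction. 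In Case~2 all comparable $\varphi_{\overline{a}}(M)$ coincide, and the paper builds the non-surjective embedding by an explicit forth construction through a set $\AP$ of approximations $(\overline{a},\overline{b},c)$, using Lemma~\ref{n11} to keep a fixed witness $c$ outside $\rm{cl}(\overline{b},M)$ and hence outside the range. Your well-order trick bypasses all of this: enriching $\tau$ by a single binary relation does not change $\kappa_{\rm{beau}}(\tau)$, Fact~\ref{ef}(1) (applied to the constant sequence with distinguished points) hands you a $\tau'$-embedding moving some element, and the rigidity of well-orders---which survives the collapse by absoluteness of well-foundedness---forces non-surjectivity. What you gain is brevity and generality: your argument uses only $|M|\geq\kappa$ and no additive hypothesis whatsoever, so it proves clause~(3) for \emph{arbitrary} $\tau$-models (the paper's frame argument, and indeed Theorem~\ref{1.2}(3) as announced in the introduction, is really about \emph{additive} $\tau$-models). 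What the paper's approach buys is internal structural information about the $\cL^{\rm{ce}}$-type stratification of $M$, which is of independent interest for the elimination-of-quantifiers theme running through Sections~2--4, but is not needed for the bare co-Hopfian conclusion. Your closing remark that the frame route ``naturally needs $|M|\geq\lambda$'' is slightly off: the paper's Case~2 only uses $|M|\geq\kappa$ (to find $c\notin\rm{cl}(\emptyset)$), though the surrounding frame language is set up with the larger $\lambda$.
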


\begin{proof}
Let $M$ be an abelian group or an $R$-module  of size $|M| \geq \kappa$.
 Thanks to Theorem  \ref{L7}, there exists  an additive  frame $\bff$ as there such that $M:=M_{\bff}$,
 $\cL_{\bff}=\cL_{\infty, \theta}^{\rm{co}}(\tau), \kappa_{\bff}=\kappa$,
 $\lambda_{\bff}=\beth_2(\kappa)^+$
 and  $\theta_{\bff}=\aleph_0.$

  The proof splits into  two cases:

 {\bf Case 1: for some $\varp < \theta,$ and $\overa, \overb \in {}^{\varp}M$, $\varphi_{\overa}(M) \supsetneqq \varphi_{\overb}(M)$}.

Consider the $\tau$-models $(M,  \overb)$ and $(M, \overa)$. Let  $\varphi \in \cL_{\infty, \theta}^{\rm{ce}}(\tau)$ be a sentence, and suppose that
$(M,  \overa) \models \varphi.$ As $\varphi_{\overa}(M) \supsetneqq \varphi_{\overb}(M)$, it follows that $(M,  \overb) \models \varphi.$
Thus by Fact \ref{ef}(2), working in the generic extension by $\mathbb{P}=\text{Col}(\aleph_0, |M|)$,
 there exists a one-to-one endomorphism $\pi \in \rm{End}(M)$ such that $\pi(\overa) = \overb.$

There is nothing to prove if
$\pi$ is not onto.
So, without loss of generality we may and do assume that  $\pi$ is onto. Then $\pi, \pi^{-1} \in \rm{Aut}(M)$ and $\pi^{-1}(\overb) = \overa$.
    We claim that $\varphi_{\overa} \leq \varphi_{\overb}$.
    Due to the minimality condition for  $\varphi_{\overa}$, it is enough to show that $\overa\in \varphi_{\overb}(M)$.
    To this end, recall that
    $\overb\in \varphi_{\overb}(M)$.
    By definition,
    $M \models \varphi_{\overline{b}}[\overline{b}]$.
    In the light of Lemma \ref{a5}(2) we observe that
     $M \models \varphi_{\overline{b}}[\pi^{-1}(\overline{b})]=\varphi_{\overline{b}}[  \overline{a} ].$
By definition, this means that $\overa\in \varphi_{\overb}(M)$, as requested. Consequently,
   $\varphi_{\overa }(M) \subseteq\varphi_{\overb }(M)$, which contradicts our assumption.

{\bf Case 2: not case $1$}.

Given two sets $A, B \subseteq M$, by  $ A \| B$ we mean that $A\subseteq B$ or $B\subseteq A$.
So in this case,  the following holds:
\begin{center}
    $(*)$: \quad\quad $\forall \varp < \kappa $ and $\forall \overline{a},\overline{b}\in {}^{\varp}M$ we have $\bigg(\varphi_{\overa}(M)\|\varphi_{\overb}(M) \Rightarrow \varphi_{\overa}(M)=\varphi_{\overb}(M)\bigg).$
\end{center}
       Set  $\Gamma = \{ \varphi_{\overa}: \overa \in {}^{<\theta}M \}$.  Now, we have the following easy claim.

\begin{claim}
\label{cla1}
    $\Gamma$  is a set of cardinality $< \kappa.$
\end{claim}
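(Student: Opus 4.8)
The plan is to bound $|\Gamma|$ by counting, separately for each arity $\varp<\theta$, the distinct values $\varphi_{\overline{a}}(M)$ realized by $\varp$-tuples, and then summing over $\varp$. First I would fix $\varp<\theta$ and set $\Gamma_\varp:=\{\varphi_{\overline{a}}(M):\overline{a}\in {}^{\varp}M\}$, viewing its elements as the minimal (hence essentially uniquely determined) definable subsets of ${}^{\varp}M$ attached to $\varp$-tuples. The whole point of being in Case~2 is the hypothesis $(*)$: for $\overline{a},\overline{b}\in {}^{\varp}M$, if $\varphi_{\overline{a}}(M)$ and $\varphi_{\overline{b}}(M)$ are $\subseteq$-comparable then they coincide. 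Rephrased, any two \emph{distinct} members of $\Gamma_\varp$ are $\subseteq$-incomparable, so $\Gamma_\varp$ is an antichain in the containment order.

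Next I would invoke Definition~\ref{k2}(4)(a) to see that such an antichain must have size $<\kappa$. Suppose towards a contradiction that $\Gamma_\varp$ had $\kappa$ many distinct members; choose tuples $\overline{a}_\alpha\in {}^{\varp}M$ for $\alpha<\kappa$ so that the sets $\varphi_{\overline{a}_\alpha}(M)$ are pairwise distinct. Since each $\varphi_{\overline{a}_\alpha}\in\cL$ and they share the common free-variable tuple $\overline{x}_{[\varp]}$, clause (4)(a) applies to $\langle\varphi_{\overline{a}_\alpha}:\alpha<\kappa\rangle$ and yields $\alpha<\beta$ with $\varphi_{\overline{a}_\alpha}(M)\supseteq\varphi_{\overline{a}_\beta}(M)$. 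But distinctness together with the antichain property forces $\varphi_{\overline{a}_\alpha}(M)\not\supseteq\varphi_{\overline{a}_\beta}(M)$, a contradiction. Hence $|\Gamma_\varp|<\kappa$ for every $\varp<\theta$.

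Finally I would assemble the global bound. For distinct arities the ambient sets ${}^{\varp}M$ differ, so values of different arity are automatically distinct, giving $\Gamma=\bigcup_{\varp<\theta}\Gamma_\varp$ as an essentially disjoint union and $|\Gamma|\le\sum_{\varp<\theta}|\Gamma_\varp|$. Here $\theta=\aleph_0<\kappa$ and $\kappa$ is regular by the frame axiom in Definition~\ref{k2}(5) (indeed $\kappa=\kappa_{\rm{beau}}$ is even inaccessible), so a sum of $\theta<\kappa$ many cardinals each $<\kappa$ remains $<\kappa$; thus $|\Gamma|<\kappa$, as claimed. I expect the only delicate points to be pure bookkeeping: justifying that distinct values within a fixed arity really are incomparable (so that (4)(a) produces a genuine contradiction rather than a harmless equality), and identifying $\Gamma$ with the set of values $\varphi_{\overline{a}}(M)$, which is legitimate because the minimality condition of Definition~\ref{k2}(3) pins down $\varphi_{\overline{a}}$ up to the equivalence $\equiv$ of Definition~\ref{k86}, so that we may take one representative per equivalence class.
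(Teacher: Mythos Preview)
Your proof is correct and follows essentially the same approach as the paper: decompose $\Gamma$ into the pieces $\Gamma_\varp$ for $\varp<\theta$, use $(*)$ together with Definition~\ref{k2}(4)(a) to bound each $\Gamma_\varp$ below $\kappa$, and conclude by regularity of $\kappa$ and $\theta<\kappa$. Your phrasing of $(*)$ as ``distinct members of $\Gamma_\varp$ form an antichain'' is a clean way to see why (4)(a) yields the contradiction, and your remarks about identifying $\Gamma$ with equivalence classes are exactly the bookkeeping the paper leaves implicit.
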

\begin{PROOF}{\ref{cla1}}
To see this, set  $\Gamma_{\varp} := \{ \varphi_{\overa}: \overa \in {}^{\varp}M \}$. Clearly,    $\Gamma=\bigcup_{\varp<\theta}\Gamma_{\varp}$, and since
$\theta< \kappa$ and $\kappa$ is regular, it suffices  to show that $|\Gamma_{\varp}|<\kappa$ for all $\varp<\theta$.
Suppose not and search for a contradiction. Take $\varp<\theta$ be such that $|\Gamma_{\varp}|\geq\kappa$. This enables us to find a sequence $\langle \overline{a}_\alpha: \alpha < \kappa \rangle$ in $^{\varp}M$
such that
\begin{itemize}
	\item[$\bullet_1$] $\forall{\alpha}<\kappa,~\varphi_{{\overa}_\alpha}\in\Gamma_{\varp}$

	\item[$\bullet_2$] $\forall \alpha\neq \beta$, $\varphi_{{\overa}_\alpha}(M)\neq \varphi_{{\overa}_\beta}(M)$.
\end{itemize}
We apply the property presented in Definition \ref{k2}(4)(a) to the family $\{\varphi_{{\overa}_\alpha}    \}_{\alpha < \kappa}$,  to find some $\alpha < \beta < \kappa$ such that  $\varphi_{{\overa}_\beta}(M) \supseteq \varphi_{{\overa}_\alpha}(M).$ By $(\ast)$, this implies that
$\varphi_{{\overa}_\beta}(M) = \varphi_{{\overa}_\alpha}(M),$ which contradicts  $\bullet_2$.
\end{PROOF}

Let $\chi:=|M|\geq\kappa$, and let $\mathbb{P}:=\rm{Col}(\aleph_{0},\chi)$.
Forcing with $\mathbb{P}$, collapses
$|M|$ into  $\aleph_0$, i.e., for any $\mathbb{P}$-generic filter $G_{\mathbb{P}}$
over $V$, we have
\begin{center}
$V[G_{\mathbb{P}}]\models$``$M$ is  countable''.
 \end{center}
 We are going to show that in $V[G_{\mathbb{P}}]$, there exists a 1-1 map $\pi:M\to M$ which is not surjective. To this end, we define  approximations to the existence of such $\pi$:

    \begin{enumerate}
        \item[$\boxplus$] Let $\AP$ be the set of all triples $(\overa, \overb, c)$ such that:

        \begin{enumerate}[(a)]
            \item $\overa, \overb \in {}^{\varp}M$ for some $\varp < \theta $ with $a_i\neq a_j$, $b_i\neq b_j$ if $i\neq j$.

            \item $\varphi_{\overa} (\overline{x}) \equiv \varphi_{\overb}(\overline{x})$ (in $M$).

            \item $c \in M$ is such that $ c \notin \rm{cl}(\overb, M), $ i.e., $\varphi_{\overb^{\smallfrown} \langle c \rangle}(M,\overb) \ \text{has cardinality} \ \geq \kappa$.
        \end{enumerate}
    \end{enumerate}
\begin{claim}
\label{cla2}
$\AP \neq \emptyset.$
\end{claim}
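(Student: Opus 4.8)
The plan is to refute emptiness by exhibiting a single, maximally degenerate element of $\AP$. Concretely, I would try $\overa = \overb = \langle\rangle$, the empty sequence, which has length $\varp = 0 < \theta$ and hence is a legitimate member of ${}^{0}M$. With this choice the first two requirements in the definition of $\AP$ are free of content: both $\overa$ and $\overb$ lie in ${}^{0}M$, so clause (a) holds, and $\varphi_{\langle\rangle} \equiv \varphi_{\langle\rangle}$ trivially, so clause (b) holds. Thus the whole claim collapses to producing a witness $c$ for clause (c).

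For clause (c) I would unwind Definition \ref{n8} at the empty sequence. Since the parameter part is empty, $b \in \rm{cl}(\langle\rangle, M)$ means exactly that $\varphi_{\langle b\rangle}(M)$ has cardinality $<\kappa$, so a good witness is precisely an element $c$ with $|\varphi_{\langle c\rangle}(M)| \geq \kappa$. The key input is Lemma \ref{n11}(a), which asserts that $\rm{cl}(\langle\rangle, M)$ has cardinality $<\kappa$. Because $|M| \geq \kappa$, the complement $M \setminus \rm{cl}(\langle\rangle, M)$ is then nonempty, and any $c$ chosen from it satisfies $\varphi_{\langle\rangle^{\smallfrown}\langle c\rangle}(M) = \varphi_{\langle c\rangle}(M)$ of cardinality $\geq\kappa$, i.e.\ $c \notin \rm{cl}(\langle\rangle, M)$. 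Hence $(\langle\rangle, \langle\rangle, c) \in \AP$ and $\AP \neq \emptyset$.

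I do not expect a genuine obstacle: the only point demanding care is that Lemma \ref{n11} carries the standing hypotheses $\kappa \in \Omega$ and closure of $\cL$ under $\exists^{\kappa}$, so I would first note that these are guaranteed by the frame set up at the start of the proof of Theorem \ref{r2} (where $\cL_{\bff} = \cL_{\infty,\theta}^{\rm{ce}}(\tau)$ and $\Omega$ contains $\kappa$). Once that is flagged, the cardinality count $|\rm{cl}(\langle\rangle, M)| < \kappa \leq |M|$ does all the work. If one prefers to sidestep the degenerate empty sequence, the same argument runs verbatim for any fixed $\varp$ with $0 < \varp < \theta$: choose any $\overa \in {}^{\varp}M$, put $\overb := \overa$ so that clause (b) is automatic, and again select $c \notin \rm{cl}(\overa, M)$ via Lemma \ref{n11}(a) together with $|M| \geq \kappa$.
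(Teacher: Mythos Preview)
Your proposal is correct and follows essentially the same approach as the paper: take $\overa=\overb=\langle\rangle$, invoke Lemma \ref{n11}(a) to get $|\rm{cl}(\langle\rangle,M)|<\kappa$, and use $|M|\ge\kappa$ to pick $c\in M\setminus\rm{cl}(\langle\rangle,M)$. Your write-up is in fact more careful than the paper's in flagging the hypotheses $\kappa\in\Omega$ and closure of $\cL$ under $\exists^{\kappa}$ needed for Lemma \ref{n11}.
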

\begin{PROOF}{\ref{cla2}}
According to   Lemma \ref{n11}(a),  $\rm{cl}(\overline{a}, M)$ has cardinality $< \kappa.$ In particular, $|\rm{cl}(\emptyset, M)|<\kappa$, and hence as
 $|M| \geq \kappa,$ we can find some $c\in M\setminus\rm{cl}(\emptyset, M)$, and consequently, $(\langle\rangle\langle\rangle,c)\in\AP.$
 The claim follows.
 \end{PROOF}

Next, we bring the following claim, which plays the key role in our proof.
\begin{claim}\label{cla3}
Suppose   $(\overa, \overb, c)\in \AP$ and  $d_{1} \in M$ is such that $d_1 \neq a_i,$ for all $i$. Then there is  some $d_{2} \in M$ such that  $ \big(\overa^{\smallfrown} \langle d_{1} \rangle, \overb^{\smallfrown}\langle d_{2} \rangle, c\big) \in \AP.$
\end{claim}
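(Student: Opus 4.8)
The plan is to produce, for the given $d_1$, an image $d_2$ by first pinning down the \emph{type} condition (clause (b) of $\AP$) and then securing the \emph{non-algebraicity} condition (clause (c)). I would start with the subgroup $H:=\varphi_{\overa^{\frown}\langle d_1\rangle}(M)\subseteq{}^{\varp+1}M$ and its projection $\psi(\overline{x}):=\exists y\,\varphi_{\overa^{\frown}\langle d_1\rangle}(\overline{x},y)$, which lies in $\cL$ since $\cL=\cL^{\mathrm{ce}}_{\infty,\theta}$ is closed under $\exists$. As $\overa\in\psi(M)$, minimality of $\varphi_{\overa}$ gives $\varphi_{\overa}(M)\subseteq\psi(M)$, and since $\overb\in\varphi_{\overb}(M)=\varphi_{\overa}(M)$ we get $\overb\in\psi(M)$. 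Hence the fibre $D:=\{d\in M:\overb^{\frown}\langle d\rangle\in H\}$ is nonempty. For \emph{any} $d_2\in D$ minimality yields $\varphi_{\overb^{\frown}\langle d_2\rangle}\le\varphi_{\overa^{\frown}\langle d_1\rangle}$, and the defining property $(*)$ of Case 2 (applied to the length-$(\varp+1)<\theta\le\kappa$ tuples) upgrades this containment to equality $\varphi_{\overb^{\frown}\langle d_2\rangle}\equiv\varphi_{\overa^{\frown}\langle d_1\rangle}$. Thus clause (b) holds automatically for every element of $D$, and the whole problem reduces to choosing $d_2\in D$ with $c\notin\rm{cl}(\overb^{\frown}\langle d_2\rangle,M)$.

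The second step is to analyze the size of $D$. Because $H$ is a subgroup, $D$ is a coset of $H_0:=\{d:\overline{0}^{\frown}\langle d\rangle\in H\}$, and likewise the section of $H$ over $\overa$ equals $\varphi_{\overa^{\frown}\langle d_1\rangle}(\overa,M)$, another coset of $H_0$ containing $d_1$; therefore $|D|=|H_0|=|\varphi_{\overa^{\frown}\langle d_1\rangle}(\overa,M)|$. This gives the clean dichotomy $|D|<\kappa\iff d_1\in\rm{cl}(\overa,M)$, on which I would split the argument. In the case $d_1\in\rm{cl}(\overa,M)$ we have $|D|<\kappa$, so for every $d_2\in D$ the section $\varphi_{\overb^{\frown}\langle d_2\rangle}(\overb,M)=D$ has size $<\kappa$, i.e.\ $d_2\in\rm{cl}(\overb,M)$; then Lemma \ref{n11}(c) gives $\rm{cl}(\overb^{\frown}\langle d_2\rangle,M)\subseteq\rm{cl}(\overb,M)$, and since $c\notin\rm{cl}(\overb,M)$ (clause (c) of the hypothesis $(\overa,\overb,c)\in\AP$) any $d_2\in D$ works.

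The remaining case $d_1\notin\rm{cl}(\overa,M)$, where $|D|=|H_0|\ge\kappa$, is the main obstacle. Here I would use the cardinality gap: by Lemma \ref{n11}(a) the set $\rm{cl}(\overb^{\frown}\langle c\rangle,M)$ has size $<\kappa\le|D|$, so one can pick $d_2\in D\setminus\rm{cl}(\overb^{\frown}\langle c\rangle,M)$, and it then suffices to prove the exchange-type implication
\[
\big(c\notin\rm{cl}(\overb,M)\ \text{and}\ d_2\notin\rm{cl}(\overb^{\frown}\langle c\rangle,M)\big)\ \Longrightarrow\ c\notin\rm{cl}(\overb^{\frown}\langle d_2\rangle,M).
\]
This symmetry is exactly what the subgroup machinery of Section 4 is designed to supply: one passes to $\Psi:=\varphi_{\overb^{\frown}\langle d_2\rangle^{\frown}\langle c\rangle}(M)$, a subgroup of ${}^{\varp+2}M$, and reads off the three algebraicity conditions as the sizes of the kernels and image of the two coordinate projections of the associated subgroup $\Psi_0$ of $M^2$; the identification $\rm{grp}(b,\overa)$ as a subgroup and $\rm{afn}(b,\overa)$ as affine (Lemmas \ref{n14}, \ref{n17} and Proposition \ref{n26}), together with the rigidity $(*)$ of Case 2 ruling out \lqq skew'' fibrations, forces the desired transfer of cardinalities. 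I expect the delicate point to be precisely verifying that $(*)$ excludes the pathological configurations in which both a kernel and the complementary image are large, since naive group theory alone does not give exchange; once that is in place, the chosen $d_2$ satisfies all three clauses and the claim follows.
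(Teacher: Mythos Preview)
Your opening paragraph and the ``small fibre'' case ($|D|<\kappa$, equivalently any $d$ in the fibre lies in $\rm{cl}(\overb,M)$) are correct and coincide with the paper's argument: both use the projection formula $\psi(\overline{x})=\exists y\,\varphi_{\overa^{\frown}\langle d_1\rangle}(\overline{x},y)$, the minimality of $\varphi_{\overa}$, the Case~2 rigidity $(*)$, and Lemma~\ref{n11}(c).

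The genuine gap is in your ``large fibre'' case. You correctly isolate the needed exchange implication
\[
\big(c\notin\rm{cl}(\overb,M)\ \text{and}\ d_2\notin\rm{cl}(\overb^{\frown}\langle c\rangle,M)\big)\ \Longrightarrow\ c\notin\rm{cl}(\overb^{\frown}\langle d_2\rangle,M),
\]
but you do not prove it; you yourself flag that ``naive group theory alone does not give exchange'' and that $(*)$ must be used to rule out the pathological fibrations. The references to Lemmas~\ref{n14}, \ref{n17} and Proposition~\ref{n26} do not by themselves supply this step: those results describe $\rm{afn}$ and $\rm{grp}$ as affine/subgroup sets, but they say nothing about transferring the size of the $z$-fibre of $\varphi_{\overb^{\frown}\langle c,d_2\rangle}(\overb,c,M)$ to the size of the $y$-fibre of $\varphi_{\overb^{\frown}\langle d_2,c\rangle}(\overb,d_2,M)$. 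So as written, your proposal is a plan with an acknowledged hole at exactly the crucial point.

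The paper avoids a direct exchange argument by a type-transfer through an auxiliary element. Having fixed $d$ in the fibre with $d\notin\rm{cl}(\overb,M)$ (so $|\bfI|\ge\kappa$ where $\bfI=\varphi_{\overb^{\frown}\langle d\rangle}(\overb,M)$), it chooses $c'\in\bfI\setminus\rm{cl}(\overb^{\frown}\langle d\rangle,M)$; minimality plus $(*)$ then give $\varphi_{\overb^{\frown}\langle d\rangle}=\varphi_{\overb^{\frown}\langle c'\rangle}$, i.e.\ $c'$ and $d$ have the \emph{same} type over $\overb$. Running the same $\exists$-projection/minimality/$(*)$ argument once more at the next length produces $d'$ with $\varphi_{\overb^{\frown}\langle c,d'\rangle}=\varphi_{\overb^{\frown}\langle c',d\rangle}$, and from this equality the non-algebraicity $c'\notin\rm{cl}(\overb^{\frown}\langle d\rangle,M)$ transfers to $c\notin\rm{cl}(\overb^{\frown}\langle d'\rangle,M)$; one then takes $d_2:=d'$. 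The point is that the paper never compares the two ``orthogonal'' fibres of a single subgroup (which is where your exchange would have to bite), but instead manufactures a second tuple $\overb^{\frown}\langle c',d\rangle$ whose full $(\varp{+}2)$-type coincides with that of $\overb^{\frown}\langle c,d'\rangle$ and reads off the desired conclusion directly from that identification. That auxiliary-$c'$ trick is the missing idea in your sketch.
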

\begin{PROOF}{\ref{cla3}}
 Recall that in $M$ we have $\varphi_{\overa} (\overline{x}) \equiv \varphi_{\overb}(\overline{x})$.
    First, we use this to find $d \in M$ such that $$(\dagger)\quad\quad\quad M \models \varphi_{\overa^{\smallfrown} \langle d_{1} \rangle}[\overb, d].$$
    Indeed, we look at the formula $$\psi(\overline{x}):=\exists y \big[\varphi_{\overa^{\smallfrown}\langle d_{1} \rangle}(\overline{x},y) \wedge \bigwedge_i y \neq x_i           \big].$$Since $\overa\in \psi(M)$, and due to the minimality of $\varphi_{\overa}$ with respect to this property, we should have  $\overb\in \varphi_{\overb} (M)  =\varphi_{\overa} (M) \subseteq \psi(M).$
    In other words,  $M \models \exists y\varphi_{\overa^{\smallfrown} \langle d_{1} \rangle}(\overb, y).$
    Hence, for some
    $d \in M$ we must have  $M \models \varphi_{\overa^{\smallfrown} \langle d_{1} \rangle}[\overb, d]$, and $d \neq b_i,$ for all $i$.
    So, $(\dagger)$ is proved. From this,
    $\overb^{\smallfrown}\langle d \rangle\in\varphi_{\overa^{\smallfrown} \langle d_{1} \rangle}(M)$. This implies, using the minimality condition on formulas,  that
     $\varphi_{\overb^{\smallfrown}\langle d \rangle}(M)  \subseteq \varphi_{\overa^{\smallfrown}\langle d_{1} \rangle}(M) $. Combining this along with $(*)$ yields that
      $\varphi_{\overa^{\smallfrown}\langle d_{1} \rangle}(M)= \varphi_{\overb^{\smallfrown}\langle d \rangle}(M).$
   First, we deal with the case $d \in \rm{cl}(\overb, M)$. Thanks to   Lemma \ref{n11}(c), and recalling that       $\Omega$ contains $\{1,{\kappa}\}$,  we know $\rm{cl}(\overb ^{\frown} \langle d \rangle, M)  \subseteq  \rm{cl}(\overb, M)$. Consequently, $c \notin \rm{cl}(\overb^{\smallfrown}\langle d \rangle, M)$. Following definition, one has  $\big(\overa^{\smallfrown} \langle d_{1} \rangle, \overb^{\smallfrown} \langle d \rangle, c\big) \in \AP,$  and we are done  by taking $d_2=d.$
  So, without loss of generality let us assume that  $d \notin \rm{cl}(\overb, M).$ In particular, $d \neq b_i$ for all $i$. Since $(\overa, \overb, c)\in \AP$, we have  $c \notin \rm{cl}(\overb, M).$ According to $\boxplus$(c), the set
       $$\bfI := \bigg\{ c' \in M: M \models \varphi_{\overb^{\smallfrown} \langle c \rangle}[\overb, c'] \bigg\}$$
        has cardinality $\geq \kappa.$ Therefore, there is $c' \in \bfI \setminus \rm{cl}(\overb^{\smallfrown}\langle d\rangle, M).$ By the minimality condition and since $\overb^{\smallfrown}\langle c'   \rangle \in \varphi_{\overb^{\smallfrown}\langle c \rangle}(M)$, we have
        $\varphi_{\overb^{\smallfrown}\langle c' \rangle}(M) \subseteq  \varphi_{\overb^{\smallfrown}\langle c \rangle}(M)$.
     Now, we use this along with $(*)$ and deduce that
         $\varphi_{\overb^{\smallfrown}\langle c \rangle}(M) = \varphi_{\overb^{\smallfrown}\langle c' \rangle}(M).$   Then, in the same vein as above, we can find some   $d'$ such that
         \begin{enumerate}
        	\item[$\bullet$]  $d' \notin \{c\}\cup \{ b_i: i   \}$,
        	
        	\item[$\bullet$] $\varphi_{\overb^{\smallfrown} \langle c, d' \rangle}(M) = \varphi_{\overb^{\smallfrown} \langle c', d \rangle}(M)$,
        		\item[$\bullet$]
         $\varphi_{\overb^{\smallfrown} \langle  d \rangle}(M) = \varphi_{\overb^{\smallfrown} \langle  d' \rangle}(M)$.
        \end{enumerate}
  In fact, to obtain  such  $d'$,
 one needs to repeat the same argument as
  above but with $$\psi(\overline{x}, z) = \exists y \big[\varphi_{\overb^{\smallfrown}\langle c, d \rangle}(\overline{x}, z, y) \wedge \varphi_{\overb^{\smallfrown}\langle  d \rangle}(\overline{x}, y) \wedge (y \neq\overline{x})\big]. $$

Since	$c' \notin \rm{cl}(\overb^{\smallfrown}\langle d\rangle, M), $ these yield  that $c \notin \rm{cl}(\overb^{\smallfrown}\langle d' \rangle, M)$.  In summary, by letting
 $d_2=d'$ we have $ \big(\overa^{\smallfrown} \langle d_{1} \rangle, \overb^{\smallfrown}\langle d_{2} \rangle, c\big) \in \AP,$ as claimed.
\end{PROOF}

In $V[G_{\mathbb{P}}]$, $M$ is countable. Let us list $M$ as $\{a_i:i<\omega\}$. We define $\pi:M\to M$ by evaluating $\pi$ at $a_i$. We do this by induction, in such a way that for some fixed $c \in M$ and all $n<\omega$, if $\pi(a_i)=b_i$,  then
\[
(\dagger\dagger)_n \quad\quad\quad \big(\langle a_i:i<n\rangle,   \langle b_i:i<n\rangle, c\big) \in \AP.
 \]
 Recall from Claim \ref{cla2} and its proof  that there is some $c$ in $M$ such that
$(\langle\rangle\langle\rangle,c)\in\AP$. Let us apply Claim \ref{cla3} to

\begin{itemize}
	\item[$\bullet$] ${\overa}:=\langle\rangle$
	
		\item[$\bullet$] ${\overb}:=\langle\rangle$
	\item[$\bullet$] $d_1:=a_0$.
\end{itemize}

This gives us an element $b_0\in M$ such that
$ \big(\langle a_0\rangle,   \langle b_0\rangle, c\big) \in \AP.$  Let $\pi(a_0)=b_0$.
Now, suppose inductively we have defined
$\pi(a_i)=b_i$ for all $i<n$ such that $(\dagger\dagger)_n $ is true.
Let us apply Claim \ref{cla3} to

\begin{itemize}
	\item[$\bullet$] ${\overa}:=\langle a_i:i<n\rangle$,
	
	\item[$\bullet$] ${\overb}:=\langle b_i:i<n\rangle$,
	\item[$\bullet$] $d_1:=a_n$.
\end{itemize}

This gives us an element $b_n \in M$ such that
 $ \big( {\overa}^{\smallfrown} \langle a_n \rangle,  {\overb}^{\smallfrown} \langle b_n \rangle, c \big) \in \AP.$        Let $\pi(a_n)=b_n$, and note that
$(\dagger\dagger)_{n+1}$ holds as well. This completes the inductive definition of $\pi$.

 The proof becomes complete if we can show the following three items are satisfied:
\begin{itemize}
	\item[$(\maltese)_1$] $\pi$ is a homomorphism.

	\item[$(\maltese)_2$] $\pi$ is 1-to-1.
	
		\item[$(\maltese)_3$] $\pi$ is not surjective.
\end{itemize}
Let us check these:
 \begin{itemize}
 	\item[$(\maltese)_1$]
 	Suppose $\varphi=\varphi_{\overa}$ is a first order formula, hence, without  loss of generality,  the length of $\overa$ is finite and we can enlarge $\overa$ to $\langle a_i:i<n\rangle$ for some $n<\omega$.
 	Recall that $b_i:=\pi(a_i)$.  By the construction $$ \big(\langle a_j:j<n\rangle,   \langle b_j:j<n\rangle, c\big) \in \AP.$$
 	Assume $M\models \varphi[\overa]$. We  show that $M\models \varphi[\overb]$.
  We have ${\overb}\in\varphi_{\overb}(M)$ and by our construction, $\varphi_{\overb}(M)=\varphi_{\overa}(M) \subseteq\varphi(M)$,
  hence ${\overb}\in\varphi(M)$, which means that $M\models \varphi[\overb]$.
  From this, it immediately follows that $\pi$ is a homomorphism.
 	
 \item[$(\maltese)_2$] Following the construction  given by Claim \ref{cla3},
 	we can always find $b_n$ so that $b_n\neq b_i$ for all $i<n$. So, $\pi$ is 1-to-1.
 	
 	\item[$(\maltese)_3$]  Suppose by the way of contradiction that $\pi$ is  surjective. In particular, we can find some $n<\omega$
 such that $c=\pi(a_n)=b_n$. In the light of Lemma \ref{n11}(b) we observe that
$
 c=b_n \in \rm{cl}\big( \langle  b_i: i < n+1         \rangle, M\big),
$
 which contradicts the following fact$$\big(\langle  a_i: i < n+1         \rangle, \langle  b_i: i < n+1         \rangle, c\big) \in \AP.$$
 \end{itemize}
 The proof is now complete.

 (3): The proof of this item is similar to the first part.
\end{proof}

It may be worth mentioning that the following aspect of Problem 1.1(ii) remains open:

\begin{question}
Is it is possible to construct an absolutely
co-Hopfian torsion-free abelian group of size $\lambda$ for $\lambda < \kappa_{\rm{beau}}$?
\end{question}

\section*{Acknowledgements}
	The authors sincerely thank the referee for their thorough review of the paper and for providing valuable comments.

\end{document}